\renewcommand*{\backref}[1]{}
\renewcommand*{\backrefalt}[4]{%
    \ifcase #1 (Not cited.)%
    \or        (p.\,#2)%
    \else      (pp.\,#2)%
    \fi}
\begin{document}

\def\ov#1{{\overline{#1}}}
\def\un#1{{\underline{#1}}}
\def\wh#1{{\widehat{#1}}}
\def\wt#1{{\widetilde{#1}}}

\newcommand{\Ch}{{\operatorname{Ch}}}
\newcommand{\Elim}{{\operatorname{Elim}}}
\newcommand{\proj}{{\operatorname{proj}}}
\newcommand{\h}{{\operatorname{h}}}

\newcommand{\hh}{\mathrm{h}}
\newcommand{\aff}{\mathrm{aff}}
\newcommand{\Spec}{{\operatorname{Spec}}}
\newcommand{\Res}{{\operatorname{Res}}}
\newcommand{\Orb}{{\operatorname{Orb}}}

\newcommand{\hcan}{{\operatorname{\wh h}}}

\newcommand{\hooklongrightarrow}{\lhook\joinrel\longrightarrow}

\newcommand{\bfa}{{\boldsymbol{a}}}
\newcommand{\bfb}{{\boldsymbol{b}}}
\newcommand{\bfc}{{\boldsymbol{c}}}
\newcommand{\bfd}{{\boldsymbol{d}}}
\newcommand{\bff}{{\boldsymbol{f}}}
\newcommand{\bfg}{{\boldsymbol{g}}}
\newcommand{\bfell}{{\boldsymbol{\ell}}}
\newcommand{\bfp}{{\boldsymbol{p}}}
\newcommand{\bfq}{{\boldsymbol{q}}}
\newcommand{\bfs}{{\boldsymbol{s}}}
\newcommand{\bft}{{\boldsymbol{t}}}
\newcommand{\bfu}{{\boldsymbol{u}}}
\newcommand{\bfv}{{\boldsymbol{v}}}
\newcommand{\bfw}{{\boldsymbol{w}}}
\newcommand{\bfx}{{\boldsymbol{x}}}
\newcommand{\bfy}{{\boldsymbol{y}}}
\newcommand{\bfz}{{\boldsymbol{z}}}

\newcommand{\bfA}{{\boldsymbol{A}}}
\newcommand{\bfF}{{\boldsymbol{F}}}
\newcommand{\bfG}{{\boldsymbol{G}}}
\newcommand{\bfQ}{{\boldsymbol{Q}}}
\newcommand{\bfR}{{\boldsymbol{R}}}
\newcommand{\bfU}{{\boldsymbol{U}}}
\newcommand{\bfX}{{\boldsymbol{X}}}
\newcommand{\bfY}{{\boldsymbol{Y}}}
\newcommand{\bfZ}{{\boldsymbol{Z}}}

\newcommand{\bfeta}{{\boldsymbol{\eta}}}
\newcommand{\bfxi}{{\boldsymbol{\xi}}}
\newcommand{\bfrho}{{\boldsymbol{\rho}}}

\def\fE{{\mathfrak E}}



\newfont{\teneufm}{eufm10}
\newfont{\seveneufm}{eufm7}
\newfont{\fiveeufm}{eufm5}
%
%
\newfam\eufmfam
                \textfont\eufmfam=\teneufm \scriptfont\eufmfam=\seveneufm
                \scriptscriptfont\eufmfam=\fiveeufm
%
%
\def\frak#1{{\fam\eufmfam\relax#1}}
%

\def\ts{\thinspace}

\newtheorem{theorem}{Theorem}
\newtheorem{lemma}[theorem]{Lemma}
\newtheorem{claim}[theorem]{Claim}
\newtheorem{cor}[theorem]{Corollary}
\newtheorem{prop}[theorem]{Proposition}
\newtheorem{question}[theorem]{Open Question}

\newtheorem{rem}[theorem]{Remark}
\newtheorem{definition}[theorem]{Definition}

\numberwithin{table}{section}
\numberwithin{equation}{section}
\numberwithin{figure}{section}
\numberwithin{theorem}{section}


\def\squareforqed{\hbox{\rlap{$\sqcap$}$\sqcup$}}
\def\qed{\ifmmode\squareforqed\else{\unskip\nobreak\hfil
\penalty50\hskip1em\null\nobreak\hfil\squareforqed
\parfillskip=0pt\finalhyphendemerits=0\endgraf}\fi}

\def\fA{{\mathfrak A}}
\def\fB{{\mathfrak B}}

\def\cA{{\mathcal A}}
\def\cB{{\mathcal B}}
\def\cC{{\mathcal C}}
\def\cD{{\mathcal D}}
\def\cE{{\mathcal E}}
\def\cF{{\mathcal F}}
\def\cG{{\mathcal G}}
\def\cH{{\mathcal H}}
\def\cI{{\mathcal I}}
\def\cJ{{\mathcal J}}
\def\cK{{\mathcal K}}
\def\cL{{\mathcal L}}
\def\cM{{\mathcal M}}
\def\cN{{\mathcal N}}
\def\cO{{\mathcal O}}
\def\cP{{\mathcal P}}
\def\cQ{{\mathcal Q}}
\def\cR{{\mathcal R}}
\def\cS{{\mathcal S}}
\def\cT{{\mathcal T}}
\def\cU{{\mathcal U}}
\def\cV{{\mathcal V}}
\def\cW{{\mathcal W}}
\def\cX{{\mathcal X}}
\def\cY{{\mathcal Y}}
\def\cZ{{\mathcal Z}}

\def\nrp#1{\left\|#1\right\|_p}
\def\nrq#1{\left\|#1\right\|_m}
\def\nrqk#1{\left\|#1\right\|_{m_k}}
\def\Ln#1{\mbox{\rm {Ln}}\,#1}
\def\nd{\hspace{-1.2mm}}
\def\ord{{\mathrm{ord}}}
\def\Cc{{\mathrm C}}
\def\Pb{\,{\mathbf P}}

\def\va{{\mathbf{a}}}

\newcommand{\commA}[1]{\marginpar{%
\begin{color}{red}
\vskip-\baselineskip 
\raggedright\footnotesize
\itshape\hrule \smallskip A: #1\par\smallskip\hrule\end{color}}}
\newcommand{\commAA}[1]{\marginpar{%
\begin{color}{magenta}
\vskip-\baselineskip 
\raggedright\footnotesize
\itshape\hrule \smallskip A: #1\par\smallskip\hrule\end{color}}}

\newcommand{\commC}[1]{\marginpar{%
\begin{color}{green}
\vskip-\baselineskip 
\raggedright\footnotesize
\itshape\hrule \smallskip I: #1\par\smallskip\hrule\end{color}}}

\newcommand{\commI}[1]{\marginpar{%
\begin{color}{blue}
\vskip-\baselineskip 
\raggedright\footnotesize
\itshape\hrule \smallskip I: #1\par\smallskip\hrule\end{color}}}

\newcommand{\commII}[1]{\marginpar{%
\begin{color}{magenta}
\vskip-\baselineskip 
\raggedright\footnotesize
\itshape\hrule \smallskip II: #1\par\smallskip\hrule\end{color}}}

\newcommand{\commM}[1]{\marginpar{%
\begin{color}{magenta}
\vskip-\baselineskip 
\raggedright\footnotesize
\itshape\hrule \smallskip CM: #1\par\smallskip\hrule\end{color}}}




\newcommand{\ignore}[1]{}

\def\vec#1{\boldsymbol{#1}}

\def\e{\mathbf{e}}



\def\GL{\mathrm{GL}}

\hyphenation{re-pub-lished}

\def\rank{{\mathrm{rk}\,}}
\def\dd{{\mathrm{dyndeg}\,}}
\def\lcm{{\mathrm{lcm}\,}}

\def\A{\mathbb{A}}
\def\B{\mathbf{B}}
\def \C{\mathbb{C}}
\def \F{\mathbb{F}}
\def \K{\mathbb{K}}

\def \Z{\mathbb{Z}}
\def \P{\mathbb{P}}
\def \R{\mathbb{R}}
\def \Q{\mathbb{Q}}
\def \N{\mathbb{N}}
\def \Z{\mathbb{Z}}

\def \L{\mathbf{L}}

\def \nd{{\, | \hspace{-1.5 mm}/\,}}

\def\mand{\qquad\mbox{and}\qquad}

\def\Zn{\Z_n}

\def\Fp{\F_p}
\def\Fq{\F_q}
\def \fp{\Fp^*}
\def\\{\cr}
\def\({\left(}
\def\){\right)}
\def\fl#1{\left\lfloor#1\right\rfloor}
\def\rf#1{\left\lceil#1\right\rceil}
\def\vh{\mathbf{h}}
\def\e{\mbox{\bf{e}}}
\def\ed{\mbox{\bf{e}}_{d}}
\def\ek{\mbox{\bf{e}}_{k}}
\def\eM{\mbox{\bf{e}}_M}
\def\emd{\mbox{\bf{e}}_{m/\delta}}
\def\eqk{\mbox{\bf{e}}_{m_k}}
\def\ep{\mbox{\bf{e}}_p}
\def\eps{\varepsilon}
\def\er{\mbox{\bf{e}}_{r}}
\def\et{\mbox{\bf{e}}_{t}}
\def\Kc{\,{\mathcal K}}
\def\Ic{\,{\mathcal I}}
\def\Bc{\,{\mathcal B}}
\def\Rc{\,{\mathcal R}}

\def\ZK{\Z_\K}
\def\LH{\cL_H}

\def \fI{\mathfrak{I}}
\def \fJ{\mathfrak{J}}
\def \fV{\mathfrak{V}}

\title[Polynomial Equations and Dynamical Systems]
{Reductions Modulo Primes of Systems of Polynomial Equations and
Algebraic Dynamical Systems}

\author[D'Andrea]{Carlos D'Andrea}
\address{Departament  de Matem\`atiques i Inform\`atica, Universitat de
  Barcelona (UB).
Gran Via~585, 08007 Barcelona, Spain}
\email{cdandrea@ub.edu}
\urladdr{\url{http://atlas.mat.ub.es/personals/dandrea/}}

\author[Ostafe] {Alina Ostafe} 
\address{School of Mathematics and Statistics, University of New South Wales. 
Sydney, NSW 2052, Australia}
\email{alina.ostafe@unsw.edu.au}
\urladdr{\url{http://web.maths.unsw.edu.au/~alinaostafe/}}

\author[Shparlinski]{Igor E. Shparlinski} 
\address{School of Mathematics and Statistics, University of New South Wales.
Sydney, NSW 2052, Australia}
\email{igor.shparlinski@unsw.edu.au}
\urladdr{\url{http://web.maths.unsw.edu.au/~igorshparlinski/}}

\author[Sombra]{Mart{\'\i}n~Sombra}
 \address{ICREA. Passeig Llu\'is
  Companys 23, 08010 Barcelona, Spain \vspace*{-2.5mm}}
\address{Departament de Matem\`atiques i Inform\`atica, Universitat de
  Barcelona (UB). Gran Via 585, 08007
  Bar\-ce\-lo\-na, Spain} 
\email{sombra@ub.edu}
\urladdr{\url{http://atlas.mat.ub.es/personals/sombra/}}

\keywords{Modular reduction of systems of polynomials, arithmetic
  Nullstellensatz, algebraic dynamical 
system, orbit length, orbit intersection}

\subjclass[2010]{Primary 37P05; Secondary 11G25, 11G35, 13P15, 37P25}

\thanks{D'Andrea was partially supported by the Spanish MEC research
  project MTM2013-40775-P, Ostafe by the UNSW Vice Chancellor's
  Fellowship, Shparlinski by the Australian Research Council
  Grants~DP140100118 and~DP170100786, and Sombra by the Spanish MINECO
  research projects MTM2012-38122-C03-02 and MTM2015-65361-P}  

\date{}

\begin{abstract}
We give bounds for the number and the size of the
  primes $p$ such that a reduction modulo $p$ of a system of
  multivariate polynomials over the integers with a finite number $T$ of
  complex zeros, does not have exactly $T$ zeros over the algebraic
  closure of the field with $p$ elements.

  We apply these bounds to study the periodic points and the intersection of
  orbits of algebraic dynamical systems over finite fields.
  In particular, we establish some links between these problems and
  the uniform dynamical Mordell--Lang conjecture. 
\end{abstract}

\maketitle


\section{Introduction}

The goal of the paper is to extend the scope of application of 
algebraic geometric methods to algebraic dynamical systems, that 
is, to dynamical systems generated by iterations of rational functions. 

Let
$$
\bfR = (R_1,\ldots,R_m), \qquad R_1,\ldots,R_m \in \Q(\bfX), 
$$
be a system of 
$m$ rational functions 
in $m$ variables $\bfX=( X_1,\ldots,X_m)$ over $\Q$. 
The iterations of this system of rational functions are given by
\begin{equation}
\label{eq:RatIter}
R_i^{(0)}=X_i \mand
R_i^{(n)}= R_i(R_1^{(n-1)}, \ldots ,R_m^{(n-1)})
\end{equation}
for $i=1, \ldots ,m$ and $n\ge1$, as long as the compositions are
well-defined. We refer to~\cite{AnKhr,Schm,Silv1} for a background on
the dynamical systems associated with these iterations.

For $i=1, \ldots ,m$ and $n\ge1$ write 
\begin{equation}
\label{eq:RedForm}
R_i^{(n)}= \frac{F_{i,n}}{G_{i,n}}
\end{equation}
with coprime
$F_{i,n}, G_{i,n}\in\Z[\bfX]$ and $G_{i,n}\ne 0$.
Given a prime $p$ such that  $G_{i,j}\not\equiv0 \pmod{p}$,
$j=1,\ldots, n$, we can
consider the reduction modulo $p$ of the iteration~\eqref{eq:RatIter}. 
Recently, there have been many advances in the study of periodic
points and period lengths in reductions of orbits of dynamical systems
modulo distinct primes $p$~\cite{AkGh,BGHKST,Jon,RoVi,Silv2}. However,
many important questions remain widely open, including:
\begin{itemize}
\item the distribution of the period length, 
\item the number of periodic points, 
\item the number of common values in orbits of two distinct 
algebraic dynamical systems.
\end{itemize}

Furthermore, some of our motivation comes from the recently introduced 
idea of transferring the {\it Hasse principle\/} for periodic points and
thus linking local and global periodicity properties~\cite{Tow}.

In this paper, we use several tools from arithmetic geometry to obtain
new results about the orbits of the reductions modulo a prime $p$ of
algebraic dynamical systems.  Our approach is based on a new result
about the reduction modulo prime numbers of systems of multivariate
polynomials over the integers. If the system has a finite number of
solutions $T$ over the complex numbers, then there exists a positive
integer $\fA$ such that, for all prime numbers $p\nmid\fA$, the
reduction modulo $p$ of the system has also $T$ solutions over
$\ov \F_{p}$, the algebraic closure of the field with $p$ elements.t
Here, using an arithmetic version of {\it Hilbert's
  Nullstellensatz\/}~\cite{DKS,KPS} and elimination theory, we give a
bound, in terms of the degree and the height of the input polynomials,
for the integer $\fA$ controlling the primes of bad reduction
(Theorem~\ref{thm:1}).   For $T=0$, that is, for systems of polynomial
equations without solutions over $\C$, this question has been
previously adressed in~\cite{HMPS,Koi}. Indeed, the corresponding
bound for the set of primes of bad reduction was a key step in
Koiran's proof that, from the point of view of complexity theory, the
satisfability problem for systems of polynomial equations lies in the
polynomial hierarchy~\cite{Koi}.

As an immediate application of this result, in
Theorems~\ref{thm:Cycles} and~\ref{thm:Cycles-Poly} we bound the
number of points of given period in the reduction modulo $p$ of an
algebraic dynamical system.
Also, combining Theorem~\ref{thm:1} with some combinatorial arguments,
we give in Theorem \ref{thm:OrbIntVar} a bound for the frequency of the points in an orbit of the
reduction modulo $p$ of an algebraic dynamical system lying in a given
algebraic variety, or that coincide with a similar point coming from a
trajectory of another algebraic dynamical system. 

We also use a different
approach, based again on an explicit version of Hilbert's
Nullstellensatz, to obtain  in Theorem~\ref{thm:OrbIntVar Alt} better results for the problem of bounding the
frequency of the points in an orbit lying in a given algebraic
variety,  under a different and apparently more
restrictive condition. 

Our  bounds 
are uniform in the prime $p$, provided that $p$ avoids a certain set
of exceptions. In particular, our bounds for the number
of $k$-periodic points can be viewed as distant relatives of the {\it
  Northcott theorem\/} for dynamical systems
in~\cite[Theorem~3.12]{Silv1}, which bounds the number of pre-periodic
points in algebraic dynamical systems over finite algebraic extensions
of $\Q$.   Here we restrict the
length of the period, but instead we consider all $k$-periodic points
over~$\ov \F_p$.

From a computational point of view, the arithmetic Nullstellens\"atze
in~\cite{DKS,KPS} are effective. Using this, one can show that the
positive integers describing the set of exceptional primes in our
results can be effectively computed.

Further applications of our results have been 
given in our subsequent paper with Chang~\cite{CDOSS}.


\subsection*{Acknowledgements.}  We are grateful to Dragos Ghioca,
Luis Miguel Pardo, Richard Pink, Thomas Tucker and Michael Zieve for
many valuable discussions and comments, specially concerning the
plausibility of the uniform boundedness assumption for the orbit
intersections.

\section{Modular Reduction of Systems of Polynomial Equations}
\label{sec:modul-reduct-syst}

\subsection{General notation}

Throughout this text, boldface letters denote finite sets or
sequences of objects, where the type and number should be clear from
the context. In particular, $\bfX$ denotes the group of variables
$(X_{1},\ldots, X_{m})$, so that $\Z[\bfX]$ denotes the ring of
polynomials $\Z[X_{1},\ldots,X_{m}]$ and $\Q(\bfX)$ the field of
rational functions $\Q(X_1,\ldots,X_m)$.

We denote by $\N$ the set of positive integer numbers. Given
functions 
$$
f,g\colon \N\longrightarrow \R, 
$$
the symbols $f=O(g)$ and $f\ll g$ both
mean that there is a constant $c\ge0$ such that $|f(k)| \le c \, g(k)$
for all $k\in \N$.  To emphasize the dependence of the implied
constant $c$ on parameters, say $m$ and $s$, we write $f=O_{m,s}(g)$
or $f\ll_{m,s} g$. We use the same convention for other parameters as
well.

For a polynomial $F\in \Z[\bfX]$, we define its
\emph{height}, denoted by $\h(F)$, as the logarithm of the maximum of
the absolute values of its coefficients.  For a rational function
$R\in \Q(\bfX)$, we write $R=F/G$ with coprime
$F,G\in\Z[\bfX]$ and we define the \emph{degree} and the
\emph{height} of $R$ respectively as the maximum of the degrees and of
the height of $F$ and $G$, that is,
$$
\deg R = \max\{\deg F, \deg G\}
\mand
\h(R)= \max\{\h(F), \h(G)\}. 
$$

Let  $K$ be a field and  $\ov K$ its algebraic closure. Given a
family of polynomials $G_1,\ldots,G_s \in K[\bfX]$, we
denote by
$$
  V(G_{1},\ldots, G_{s})=\Spec\( K[\bfX]/(G_{1},\ldots,
  G_{s})\) \subset \A^{m}_{K}
$$
its associated affine algebraic variety. We also denote by
$Z(G_1,\ldots,G_s)$ their zero set in $\ov K^{m}$, which coincides
with the set of $\ov K$-valued points $V(G_{1},\ldots, G_{s})(\ov K)$.

Let
\begin{equation} \label{eq:16}
\bfR = (R_1,\ldots,R_m), \qquad R_1,\ldots,R_m \in K(\bfX)
\end{equation} 
a system of $m$ rational functions in $m$ variables over $K$. For
$n\ge 1$, we denote
$$
\bfR^{(n)} = (R_1^{(n)}, \ldots ,R_m^{(n)}),
$$
as long as this iteration is well-defined as a rational function.



Given a point $\bfw \in \ov K^m$ we define its orbit with 
respect to the system of rational functions above as the set
\begin{multline}
\label{eq:Orb R}
\Orb_\bfR(\bfw) = \{\bfw_{n} \mid \text{with} \ \bfw_{0}= \bfw \
 \\ \text{and}
\
\bfw_{n} = \bfR\(\bfw_{n-1}\), \ n =1,2,\ldots \}. 
\end{multline}
The orbit terminates if $\bfw_{n}$ is a pole of $\bfR$
and, in this case, $\Orb_\bfR(\bfw)$ is a finite set. 

If the point $\bfw_{n}$ in~\eqref{eq:Orb R} is defined, then
$\bfw_{0}$ is not a pole of $\bfR^{(n)}$ and
$\bfw_{n}=\bfR^{(n)}(\bfw_{0})$. However, the fact that the evaluation
$\bfR^{(n)}(\bfw_{0})$ is defined does not imply the existence of
$\bfw_{n}$, since this latter point is defined if and only if all the
previous points of the orbit~\eqref{eq:Orb R} are defined and
$\bfw_{n-1}$ is not a pole of $\bfR$.  For instance, let $m=1$ and
$R(X)=1/X$. Then $R^{(2)}(X)=X$ and we see that $R^{(2)}(0)=0$, but
$w_2=R(R(0))$ is not defined as $0$ is a pole for~$R$.  Clearly, for
polynomial systems this distinction does not exist.

 \subsection{Preserving the number of points}

The following is our main result concerning the reduction modulo prime numbers of
systems of multivariate polynomials over the integers.

\begin{theorem} \label{thm:1} Let $m\ge 1$ and  let $F_1,\ldots,F_s\in
  \Z[\bfX]$  be a system of polynomials whose zero set in
  $\C^{m}$ has a finite number $T$ of distinct points.  Set
  $$
  d=\max_{i=1,\ldots,m}\deg F_{i}\mand h=\max_{i=1,\ldots,m}\h(F_{i}).
  $$  
  Then there
  exists $\fA \in \N$ satisfying 
$$
\log \fA \le C_1(m) d^{3m +1} h +C_2(m,s)d^{3m +2} ,
$$
with
$$
C_1(m) =11m  + 4 \mand
C_2(m,s) =  (55 m +99) \log ((2m+5)s)
$$
 and such that, if $p$ is a prime number not dividing $\fA$, then the
 zero set in $\ov \F_{p}^{m}$ of the system of polynomials
 $F_i\pmod{p}$, $i=1,\ldots, s$, consists of exactly $ T$ distinct
 points.
\end{theorem}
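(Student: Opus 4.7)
The overall strategy is to encode the $T$-element zero set of the system by a single elimination (Chow-like) polynomial in auxiliary variables, and then to collect into $\fA$ precisely those primes at which this encoding degenerates under reduction. Let $\bfxi_1,\ldots,\bfxi_T \in \C^m$ be the distinct complex zeros of $F_1,\ldots,F_s$. Introducing auxiliary variables $\bfU = (U_1,\ldots,U_m)$ and $Y$, I would work with the elimination polynomial
$$
E(\bfU, Y) \;=\; \prod_{i=1}^T \Big( Y - \sum_{j=1}^{m} U_j\, \xi_{i,j} \Big) \,\in\, \Q[\bfU, Y],
$$
which is Galois-invariant in the coordinates and hence rational, and whose denominator-free primitive representative $\tilde E \in \Z[\bfU, Y]$ factors, over $\ov\Q$, into exactly $T$ pairwise distinct linear forms in $Y$ recording the points of $Z(F_1,\ldots,F_s)$.

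The key arithmetic input is then the effective Nullstellens\"atze of~\cite{DKS, KPS}. By construction $\tilde E\bigl(\bfU,\, U_1 X_1 + \cdots + U_m X_m\bigr)$ vanishes on every complex zero of the system, so some power of it lies in $(F_1,\ldots,F_s) \subset \Q[\bfU,\bfX]$. The arithmetic version upgrades this to an explicit $\Z$-identity
$$
\fA_1 \cdot \tilde E\bigl(\bfU,\, U_1 X_1 + \cdots + U_m X_m\bigr)^{N} \;=\; \sum_{i=1}^{s} H_i(\bfU,\bfX)\, F_i(\bfX),
$$
with $H_i \in \Z[\bfU,\bfX]$, $N \in \N$, and a non-zero integer $\fA_1$ whose logarithm is controlled by a combination of $d^{O(m)} h$ and $d^{O(m)} \log s$. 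For a prime $p \nmid \fA_1$ this identity reduces modulo $p$ and forces every zero of the reduced system in $\ov\F_p^{\,m}$ to produce a root of $\tilde E \bmod p$; hence the reduced system has at most $T$ zeros, as long as the linear factors of $\tilde E$ do not collapse modulo $p$. To rule out collapses one includes in $\fA$ the leading coefficient of $\tilde E$ in $Y$ together with a non-zero coefficient of $\operatorname{disc}_Y(\tilde E) \in \Z[\bfU]$, which is non-vanishing since the $T$ linear factors are pairwise distinct over $\ov\Q$.

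The reverse inclusion --- every root of $\tilde E \bmod p$ is realized by a zero of the reduced system, with no stray positive-dimensional component appearing in characteristic $p$ --- is obtained by applying the arithmetic Nullstellensatz a second time to the augmented system consisting of $F_1,\ldots,F_s$ together with the auxiliary equation $Y - U_1 X_1 - \cdots - U_m X_m$. This yields shape-lemma style polynomial witnesses expressing each coordinate $X_k$ of a zero as a rational function of the corresponding root of $\tilde E$, contributing a further integer factor $\fA_2$. Setting $\fA$ to be the product of $\fA_1$, $\fA_2$ and the relevant discriminantal integers, and summing logarithmic heights --- all of shape $d^{O(m)} h + d^{O(m)} \log s$ with explicit constants extracted from~\cite{DKS, KPS} --- should yield the announced bound with $C_1(m) = 11m+4$ and $C_2(m,s) = (55m+99)\log((2m+5)s)$. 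The main obstacle is to squeeze the exponent of $d$ down to exactly $3m+1$ rather than the cruder $O(m^{2})$ one gets naively: this forces a careful application of the Nullstellensatz to the \emph{augmented} system (with the extra $\bfU$ and $Y$ variables) and a tight B\'ezout-type bookkeeping of the intermediate degrees throughout.
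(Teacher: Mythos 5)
Your setup matches the paper's for the first half of the argument: your $\tilde E$ is (up to renaming $Y$ as $U_0$) the eliminant $E_V$ of the projective closure of $V$, the paper likewise bounds its degree ($\le d^m$) and height ($\approx m d^{m-1}h$), applies the arithmetic Nullstellensatz of~\cite{DKS} to $E_V$, $F_1,\ldots,F_s$ and the generic linear form to get an identity $\alpha E_V^N\in(F_1,\ldots,F_s,L^{\aff})$ over $\Z$, and puts into $\fA$ the leading $U_0$-coefficient together with a nonzero coefficient of $\Res_{U_0}(E_V,\partial E_V/\partial U_0)$. Your deduction that each zero of the reduced system contributes a linear factor of $\tilde E\bmod p$, and hence that there are at most $T$ zeros, is exactly the paper's upper-bound argument (note in passing that non-collapse of the linear factors is not even needed for this direction, only primitivity of $\tilde E$).

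The genuine gap is in the reverse direction, i.e.\ in showing that for $p\nmid\fA$ the reduced system has at \emph{least} $T$ distinct zeros in $\ov\F_p^m$. You propose to get this by ``applying the arithmetic Nullstellensatz a second time'' to the augmented system $F_1,\ldots,F_s,\ Y-U_1X_1-\cdots-U_mX_m$ so as to obtain ``shape-lemma style witnesses'' expressing each coordinate as a rational function of a root of $\tilde E$. But the arithmetic Nullstellensatz only certifies ideal membership of polynomials that vanish on a variety; applied to the augmented system it again yields constraints satisfied by zeros of the reduced system (upper-bound-type information), and it cannot by itself manufacture zeros of $F_i\bmod p$ out of roots of $\tilde E\bmod p$. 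Indeed, even knowing that $\tilde E\bmod p$ splits into $T$ distinct linear factors with unit leading coefficient, one must still prove that the corresponding points of $\ov\F_p^m$ actually satisfy $F_i\equiv 0\pmod p$. The paper closes this gap by a scheme-theoretic argument that your proposal has no substitute for: the closure $\ov\cV\subset\P^m_\Z$ of $V$ is flat over $\Spec(\Z)$, so by invariance of the Hilbert polynomial every fibre $\ov\cV_p$ is a $0$-dimensional scheme of degree exactly $T$; its eliminant divides $E_V\bmod p$, hence is squarefree when $p\nmid\beta$, so $\ov\cV_p$ is reduced and consists of $T$ distinct points, which lie in the affine chart (degree $T$ in $U_0$) and are zeros of the reduced system because $I(\ov\cV)\supseteq(F_1,\ldots,F_s)$. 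If you want to avoid flatness you would instead need an explicit integral parametrization of the points (Kronecker/geometric-resolution style, e.g.\ via partial derivatives of the Chow form) together with height bounds for it and congruences $F_i(v_1(Y),\ldots,v_m(Y))\equiv 0$ modulo the eliminant over $\Z$ up to a controlled denominator — a different effective tool, whose quantitative bookkeeping your proposal does not supply; consequently the claimed constants $C_1(m)=11m+4$ and $C_2(m,s)$ are not substantiated by the argument as written.
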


This result allows us to control the number and the height of the
primes of bad reduction.

\begin{cor} \label{cor:2} With notation as in Theorem~\ref{thm:1}, set
  $\bfF=(F_{1},\ldots,F_{s})$ and let $S_{\bfF}$ denote the set of
  prime numbers such that the number of zeros in $\ov \F_{p}^{m}$
  of the system of polynomials $F_i\pmod{p}$, $i=1,\ldots, s$, is
  different from $T$. Then
$$
 \max \left\{ \# S_{\bfF}, \, \max_{p \in S_{\bfF}}\log p\right\} \ll_{m,s}d^{3m+1} h + d^{3m+2}.
$$
\end{cor}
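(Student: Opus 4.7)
The plan is to deduce both bounds directly from Theorem~\ref{thm:1}, essentially by converting the divisibility statement into a counting statement and into a size statement via elementary number theory.

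First, I would apply Theorem~\ref{thm:1} to the system $\bfF = (F_1,\ldots,F_s)$ to obtain a positive integer $\fA$ with
$$
\log \fA \le C_1(m)\, d^{3m+1} h + C_2(m,s)\, d^{3m+2},
$$
such that for every prime $p \nmid \fA$ the reduction modulo $p$ has exactly $T$ zeros in $\ov\F_p^{\,m}$. The crucial point is the contrapositive: every prime $p \in S_{\bfF}$ must divide $\fA$, so $S_{\bfF}$ is contained in the (finite) set of prime divisors of $\fA$.

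From this containment the two desired bounds follow at once. Any $p \in S_{\bfF}$ satisfies $p \le \fA$, hence $\log p \le \log \fA$, which already gives the stated bound for $\max_{p\in S_{\bfF}} \log p$. For the cardinality, since any distinct prime divisors of $\fA$ multiply to at most $\fA$ and each is $\ge 2$, we have
$$
\# S_{\bfF} \le \omega(\fA) \le \frac{\log \fA}{\log 2},
$$
where $\omega$ denotes the number of distinct prime divisors. Combining both inequalities with the bound on $\log \fA$ from Theorem~\ref{thm:1}, and absorbing the constants $C_1(m)$, $C_2(m,s)$ and $1/\log 2$ into the implicit constant (which is allowed to depend on $m$ and $s$), yields
$$
\max\left\{\#S_{\bfF},\ \max_{p\in S_{\bfF}}\log p\right\} \ll_{m,s} d^{3m+1} h + d^{3m+2}.
$$

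There is essentially no obstacle here: the corollary is a bookkeeping consequence of Theorem~\ref{thm:1}, and all of the substantive work lies in having proved that theorem. The only thing to be a bit careful about is to state the reduction cleanly, noting that the \emph{same} integer $\fA$ simultaneously controls both the number and the size of the exceptional primes, since a bound on $\log \fA$ dominates both $\omega(\fA)$ and $\max_{p\mid \fA} \log p$.
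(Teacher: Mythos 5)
Your proposal is correct and is exactly the intended argument: Corollary~\ref{cor:2} follows immediately from Theorem~\ref{thm:1} by noting that every $p\in S_{\bfF}$ divides $\fA$, so that $\max_{p\in S_{\bfF}}\log p\le\log\fA$ and $\#S_{\bfF}\le\log\fA/\log 2$, and then invoking the stated bound on $\log\fA$. The paper treats this as an immediate consequence and gives no separate proof, so there is nothing to add.
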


\begin{rem} \label{rem:1}
In the interesting special case when $s=m$, one can get a
slightly stronger version of Theorem~\ref{thm:1}, but of the same
general shape. 

It is also very plausible that Theorem~\ref{thm:1} admits a number of
extensions such as zero-dimensional systems of polynomial equations on
an equidimensional variety $X\subseteq \A_{\C}^{m}$ instead of just on
$\A_{\C}^{m}$.  One can also obtain a bound taking into account the
degree and the height of each individual polynomials $F_{j}$.
\end{rem}

\subsection{Preliminaries} 
\label{sec:preliminaries}

Besides the application of an arithmetic Nullstellensatz, the proof of
Theorem~\ref{thm:1} relies on elimination theory and on the basic
properties of schemes over the integers. Hence, it is convenient
to work using the language of algebraic geometry as in, for
instance,~\cite{Liu02}.

Let $F_1,\ldots,F_s\in \Z[\bfX]$ be a system of polynomials
whose zero set in $\C^{m}$ has a finite number $T$ of distinct points,
as in the statement of Theorem~\ref{thm:1}. Denote by $V$ the
subvariety of the affine space $\A^{m}_{\Q}=\Spec(\Q[\bfX])$ defined by this system of
polynomials. For each prime $p$, set
\begin{equation}
\label{eq:Fip}
 F_{i,p}\in \F_{p}[\bfX]  
\end{equation}
for the reduction modulo $p$ of $F_{i}$, and by $V_{p}$ the subvariety
of $\A^{m}_{\F_{p}}=\Spec(\F_{p }[\bfX])$ defined by the
system $F_{i,p}$, $i=1,\ldots, s$.

Recall that, given a field extension $K\hookrightarrow L$ and a
variety $X$ over $K$, we denote by $X(L)$ the set of $L$-valued points
of $X$. We have that
$$
  \A_{\Q}^{m}(\C)= \C^{m} \quad \text{ and } \quad \A_{\F_{p}}^{m}(\ov
  \F_{p})= \ov \F_{p}^{m},
$$
and that the varieties $ V(\C)$ and $ V_{p}(\ov\F_{p})$ coincide with the zero sets
$ Z(F_{1},\ldots, F_{s})$ and $Z(F_{1,p},\ldots, F_{s,p})$,
respectively. Our aim is to give a bound for an integer $\fA\in \N$
such that, if $p\nmid \fA$, then $V_{p}(\ov\F_{p})$ consists of $T$
distinct points.

Let $\A^{m}_{\Z}$ and $\P^{m}_{\Z}$ be the affine space and the
projective space over the integers, respectively.  We denote by
$\bfZ=\{Z_{0},\ldots, Z_{m}\}$ the homogeneous coordinates of
$\P^{m}_{\Z}$. Using the standard inclusion
\begin{equation}\label{eq:8}
\iota\colon \A^{m}_{\Z}\hooklongrightarrow \P^{m}_{\Z}\quad , \quad (x_{1},\ldots,x_{m})\longmapsto (1:x_{1}:\ldots:x_{m}),
\end{equation}
we identify $\A_{\Z}^{m}$ with the open subset of $\P^{m}_{\Z}$ given
by the non-vanishing of $Z_{0}$. The coordinates of these spaces are
then related by $X_{i}=Z_{i}/Z_{0}$.

Let $\cV$ and $\ov \cV$ denote the closure of $V$ in $\A^{m}_{\Z}$ and
in $\P^{m}_{\Z}$, respectively. Then $\cV$ is the affine scheme
corresponding to the ideal
$$
I(\cV)=I(V) \cap \Z[\bfX]
$$
and $\ov \cV$ is the projective scheme corresponding to 
$$
I(\ov \cV) =I(\cV)^{\hh}\subseteq \Z[\bfZ],  
$$
the homogenisation of the ideal $I(\cV)$.

Consider the projection $\pi\colon \P^{m}_{\Z}\to \Spec(\Z)$ and set
$$
 \ov \cV_{p}=\pi^{-1}(p) \cap \ov
\cV 
$$
for the fibre over the prime $p$ of the restriction to $\ov \cV$ of this map.
It is a subscheme of the projective space $ \P^{m}_{\F_{p}}$.

The morphism of schemes $\cV\to \Spec(\Z)$ is \emph{flat} if there is
a family of flat $\Z$-algebras $\cA_{j}$, $j\in J$, such that their
associated affine schemes form an open covering of $\cV$, namely $
\cV=\bigcup_{j\in J}\Spec(\cA_{j})$. Since $\Z$ is a principal ideal
domain, the $\Z$-algebras $\cA_{j}$, $j\in J$, are flat if and only if
they are torsion free~\cite[Corollary~1.2.5]{Liu02}. The flatness of
an algebra over a ring is a property concerning extensions of
scalars. At the geometric level, this property ensures a certain
continuity behavior of the fibres of the morphism,
see~\cite[\S4.3]{Liu02} for more details.

\begin{lemma} \label{lem:3} Let notation be as above. 
  \begin{enumerate}
  \item \label{item:4} The projective scheme $\ov \cV$ is flat over
    $\Spec(\Z)$ and moreover, it is reduced, has pure relative dimension
    0, and none of its irreducible components is contained in the
    hyperplane at infinity.
\item \label{item:5} For all $p\in\Spec(\Z)$, we have that 
$\ov \cV_{p}$ is a 0-dimensional subscheme of $ \P^{m}_{\F_{p}}$ of
  degree $T$.
\item \label{item:6} The inclusion $\ov \cV_{p}(\ov \F_{p}) \cap
  {\ov \F_{p}^{m}} \subseteq V_{p}(\ov \F_{p})$ holds.
  \end{enumerate}
\end{lemma}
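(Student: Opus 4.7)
The plan is to establish flatness first and then read off the remaining properties. Since $\Z$ is a principal ideal domain, flatness of $\cV$ over $\Spec(\Z)$ is equivalent to its coordinate ring being $\Z$-torsion free, and this is immediate from the definition $I(\cV) = I(V)\cap\Z[\bfX]$: the quotient $\Z[\bfX]/I(\cV)$ embeds into the $\Q$-vector space $\Q[\bfX]/I(V)$, and therefore has no $\Z$-torsion. Passing to the projective closure $\ov\cV$ defined by the homogenisation $I(\cV)^{\hh}$, each graded component of $\Z[\bfZ]/I(\cV)^{\hh}$ is likewise torsion-free, since its dehomogenisation lands back inside $\Z[\bfX]/I(\cV)$. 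So $\ov\cV$ is flat over $\Spec(\Z)$. Reducedness comes from $V$ being a subvariety, so that $I(V)$ is radical in $\Q[\bfX]$: the intersection $I(V)\cap\Z[\bfX]$ is then radical in $\Z[\bfX]$ by a quick check, and the homogenisation of a radical ideal remains radical because any homogeneous $f$ with $f^n\in I(\cV)^{\hh}$ dehomogenises to an element of the radical ideal $I(\cV)$ and therefore itself lies in $I(\cV)^{\hh}$.

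Pure relative dimension $0$ and the absence of irreducible components at infinity then both follow from flatness together with an inspection of the generic fibre. The generic fibre $\ov\cV\otimes_\Z\Q$ coincides with the projective closure of the $0$-dimensional affine scheme $V$ inside $\P^{m}_\Q$, which is itself $0$-dimensional of degree $T$ and lies entirely in the chart $\{Z_0\neq 0\}$. Any irreducible component of $\ov\cV$ dominates $\Spec(\Z)$ by flatness, so it meets this generic fibre; this both forces its relative dimension to be $0$ and prevents it from being contained in the hyperplane $\{Z_0=0\}$. This finishes (1), and (2) follows immediately: for a flat projective morphism the Hilbert polynomial is constant on $\Spec(\Z)$, and its value $T$ on the generic fibre is inherited by every $\ov\cV_p\subset\P^{m}_{\F_p}$.

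Part (3) is a direct unwinding of the definitions. A point $P\in\ov\cV_p(\ov\F_p)\cap\ov\F_p^{m}$ has $Z_0(P)\neq 0$, so its affine dehomogenised coordinates annihilate every polynomial in the reduction $I(\cV)\pmod p$. Since each input $F_i$ lies in $I(V)\cap\Z[\bfX]=I(\cV)$, the reduction $F_{i,p}$ vanishes at $P$, placing $P$ in $V_p(\ov\F_p)$.

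The main technical point is the careful handling of the passage from $I(\cV)$ to its homogenisation $I(\cV)^{\hh}$: torsion-freeness, radicality, and the bijection between minimal primes of $I(\cV)$ and those of $I(\cV)^{\hh}$ not containing $Z_0$ are standard facts from commutative algebra, but they have to be verified rather than waved away, since together they are precisely what guarantees that the scheme-theoretic degree $T$ is transported faithfully to every fibre $\ov\cV_p$.
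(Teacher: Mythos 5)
Your proposal is correct and follows essentially the same route as the paper: flatness and reducedness of $\ov \cV$ over $\Spec(\Z)$ (torsion-freeness over the PID $\Z$), then constancy of the Hilbert polynomial for the flat projective morphism to transfer the degree $T$ of the generic fibre to every $\ov \cV_{p}$, and finally the observation that the defining ideal of $\ov \cV_{p}\cap \A^{m}_{\F_p}$ contains the $F_{i,p}$ for part (3). The only cosmetic difference is that the paper establishes flatness and reducedness by decomposing $V$ into irreducible components and citing~\cite[Proposition~4.3.9]{Liu02}, whereas you verify torsion-freeness and radicality of $I(\cV)^{\hh}$ directly on graded pieces via dehomogenisation.
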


\begin{proof}
  For the statement~\eqref{item:4}, consider the decomposition $V=\bigcup_{C}C$ into
  irreducible components. For each $C$, denote by its closure in
  $\P^{m}_{\Z}$. Then
$$
  I(\ov \cC)= (I(C)\cap \Z[\bfX])^{\hh}
  \subseteq \Z[\bfZ],
$$
where, as before, $J^\hh$ denotes the homogenisation of the ideal $J$.

One can verify that this ideal is prime and that $I(\ov \cC)\cap
\Z=\{0\}$. We have that 
$$
\ov\cV= \bigcup_{C}\ov \cC,
$$ 
and so $\ov \cV$ is
a reduced scheme that, by~\cite[Proposition~4.3.9]{Liu02}, is flat
over $\Spec(\Z)$.  Moreover, the Krull dimension of the quotient ring
$\Z[\bfZ]/I(\ov\cC)$ is one and $Z_{0}\notin I(\ov\cC)$,
which respectively implies that $\ov\cV$ is of pure relative dimension
0 and that none
of its irreducible components is contained in the hyperplane at
infinity of $\P^{m}_{\Z}$, as stated. 

Now we turn to the statement~\eqref{item:5}. By the invariance of the
Euler-Poincar\'e characteristic of the fibres of a projective flat
morphism, see~\cite[Proposition~5.3.28]{Liu02}, and the fact that the map
$\ov \cV\to \Spec(\Z)$ is flat, the Hilbert polynomial of $\ov
\cV_{p}$ coincides with that of the generic fibre of that map. This
generic fibre coincides with the closure of $V$ in $\P^{m}_{\Q}$,
which is a 0-dimensional variety of degree $T$. It follows that its
Hilbert polynomial is the constant $T$, and so $\ov \cV_{p}$ is also a
0-dimensional scheme of degree~$T$.

To prove the statement~\eqref{item:6}, note first that $ \ov
\cV_{p}(\ov\F_{p})$ is given by the zero set in $\P^{m}_{\F_{p}}(\ov
\F_{p})$ of the ideal
$$
  \(\sqrt{(F_{1}, \ldots, F_{s})} \cap \Z[\bfX]\)^{\hh} \pmod{p} \quad \subseteq \F_{p}[\bfZ].
$$
Hence, the intersection $\ov \cV_{p}(\ov \F_{p}) \cap {\ov \F_{p}^{m}} $ coincides with
the zero set in $\ov \F_{p}^{m}$ of the affinisation of this ideal,
obtained by setting $Z_{0}\to 1$ and $Z_{i}\to X_{i}$, $i=1,\ldots,
m$.  Denote by $I_{1}$ this ideal of $\F_{p}[\bfX]$.

On the other hand, $V_{p}$
is given by the zero set in $\ov \F_{p}^{m}$ of the ideal
$$
 I_{2}=\sqrt{\smash{(F_{1,p}, \ldots, F_{s,p})}\vphantom{F_1}} \subseteq\F_{p}[\bfX]
$$
with $F_{1,p}, \ldots, F_{s,p}$ as in~\eqref{eq:Fip}.  Then
\eqref{item:6} follows from the inclusion of ideals $I_{1}\supset
I_{2}$.
\end{proof}

\subsection{Eliminants  and heights} \label{sec:eliminants-heights}

We recall the notion of eliminant of a homogeneous ideal as presented
by Philippon in~\cite{Phi86}.  Let $R$ be a principal ideal domain,
with group of units $R^{\times}$ and field of fractions~$K$. Let
$\bfU=\{U_{0},\ldots, U_{m}\}$ be a further group of $m+1$ variables
and consider the general linear form in the variables $\bfZ$ given by
$$
  L=U_{0}Z_{0}+\ldots+U_{m}Z_{m} \in \Z[\bfU][\bfZ].
$$

\begin{definition}
  \label{def:1}
Let $I\subseteq R[\bfZ]$ be a homogeneous ideal. The \emph{eliminant ideal} of
$I$ is the ideal of $R[\bfU]$ defined as 
\begin{multline*}
  \fE(I) = \{ F\in R[\bfU] \mid  \exists k\ge 0  \\\text{ with } Z_{j}^{k} F\in
  I R[\bfU,\bfZ] + (L) \text{ for } j=0,\ldots, m\}.
\end{multline*}
If $\fE(I)$ is principal, then the \emph{eliminant} of $I$, denoted by
$\Elim(I)$, is defined as any generator of this ideal. 
\end{definition}

The eliminant of an ideal of $R[\bfZ]$ is 
a homogeneous polynomial, uniquely defined up to a factor in
$R^{\times}$.

In the following proposition, we gather the basic properties of
eliminants of 0-dimensional ideals following~\cite{Nes77,Phi86}.
Given a prime ideal $P$ in some ring and a $P$-primary ideal $Q$, the
\emph{exponent} of $Q$, denoted by $e(Q)$, is the least integer $e\ge
1$ such that $P^{e}\subseteq Q$.  Notice that $Q$ is prime if and only
if $e(Q)=1$.

\begin{lemma}
  \label{prop:1}
Let $I\subseteq R[\bfZ]$ be an equidimensional homogeneous ideal
defining  a 0-dimensional subvariety of $\P_{K}^{m}$. 
\begin{enumerate}
\item \label{item:1} The eliminant ideal $\fE(I)$ is principal and
  $\Elim(I)$ is well-defi\-ned. 
\item \label{item:7} If $I$ is prime and $(Z_{0},\ldots, Z_{m})
  \not\subset I$, then $\Elim(I)$ is an irreducible polynomial. 
\item \label{item:2} Let $I=\bigcap_{i}Q_{i}$ be the minimal primary
  decomposition of $I$ and set $P_{i}=\sqrt{Q_{i}}$. Then there exists
  $\mu\in R^{\times}$ such that
$$
    \Elim(I)= \mu \prod_{i}\Elim(P_{i})^{e(Q_{i})}.
$$
\item \label{item:3} Let $V(I)(\ov K)$ be the zero set of $I$ in
  $\P^{m}_{K}(\ov K)$. Then
$$
    \Elim(I)= \lambda \prod_{\eta\in V(I)(\ov K)} L(\eta)^{e_{\eta}},
$$
with $\lambda\in K^{\times}$ and where $e_{\eta}$ denotes the exponent
of the primary component associated to the point $\eta$.  In
particular, $I\otimes_{R} K$ is radical if and only if $\Elim(I)$ is
squarefree.
\end{enumerate}
\end{lemma}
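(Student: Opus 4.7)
The plan is to reduce each statement to the classical theory of eliminants over the fraction field $K$ of $R$, where all four assertions admit standard proofs, and then to descend back to $R$ by exploiting that $R$, being a PID, is a unique factorization domain, so that Gauss's lemma is available in $R[\bfU]$. Throughout, I would work with the auxiliary ideal $J = IR[\bfU,\bfZ]+(L)\subseteq R[\bfU,\bfZ]$, so that $\fE(I)$ is, by unwinding the definition, the intersection of $R[\bfU]$ with the saturation of $J$ by the irrelevant ideal $(Z_0,\ldots,Z_m)$.

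For part~\eqref{item:1}, I would first show that $\fE(I)\otimes_R K$ is principal in $K[\bfU]$. Geometrically, the zero set of $J\otimes K$ in $\P^m_K\times\A^{m+1}_K$, localized away from the irrelevant locus, is the incidence correspondence $\{(\eta,\bfu):\eta\in V(I),\ L(\eta)=0\}$; its image under the projection to $\A^{m+1}_{\bfU}$ is the finite union of hyperplanes $\bigcup_{\eta\in V(I)(\ov K)}\{L(\eta)=0\}$, a hypersurface cut out by a single polynomial. To descend from $K$ to $R$, I would pick a generator $E\in R[\bfU]$ of $\fE(I)\otimes K$ whose content is a unit in $R$; then given any $F\in\fE(I)$, writing $F=GE$ with $G\in K[\bfU]$ and comparing contents via Gauss's lemma forces $G\in R[\bfU]$, so that $\fE(I)=E\cdot R[\bfU]$.

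For parts~\eqref{item:7}--\eqref{item:3} I would again work first in $K[\bfU]$. Part~\eqref{item:7} amounts to the remark that when $I$ is prime and does not contain $(Z_0,\ldots,Z_m)$, the set $V(I)(\ov K)$ is a single orbit under $\operatorname{Gal}(\ov K/K)$, and the corresponding eliminant equals, up to a scalar in $K^\times$, the norm $\prod_{\sigma}L(\sigma\eta)$, which is irreducible in $K[\bfU]$; by part~\eqref{item:1} its primitive representative in $R[\bfU]$ generates $\fE(I)$ and is irreducible there by Gauss. For part~\eqref{item:2} I would establish the multiplicativity $\fE(I\cap I')=\fE(I)\fE(I')$ when $V(I)\cap V(I')=\emptyset$ (via the Chinese Remainder Theorem applied to $K[\bfZ]/I\cap I'$), combined with the identity $\fE(Q)=\fE(P)^{e(Q)}$ for any $P$-primary $Q$, which can be obtained by filtering $K[\bfZ]_P/Q$ by a composition series of length $e(Q)$ whose successive quotients each contribute a factor $\fE(P)$. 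Part~\eqref{item:3} then follows from~\eqref{item:7} and~\eqref{item:2}, since the minimal primary decomposition of $I\otimes K$ corresponds bijectively to the Galois orbits in $V(I)(\ov K)$, with $e(Q_i)=e_\eta$ constant on each orbit.

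The main obstacle will be part~\eqref{item:1}: while the principality of the eliminant over a field is classical, verifying that $\fE(I)$ itself, and not merely $\fE(I)\otimes K$, is principal requires a careful content-tracking argument, and one must separately check that saturating by $(Z_0,\ldots,Z_m)$ over $R$ does not introduce spurious $R$-torsion. This is precisely the point at which the equidimensionality hypothesis on $I$ together with the $0$-dimensional geometric condition enter, since these ensure that every minimal prime of $I$ contributes cleanly to the eliminant and that no component is absorbed into the irrelevant locus.
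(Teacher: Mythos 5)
The paper does not actually prove this lemma: all four items are quoted from Nesterenko and Philippon (Proposition~2 and the Corollary to Proposition~3 in Nesterenko's paper, Lemma~1.8 and Proposition~1.3 in Philippon's), so your proposal is a from-scratch reconstruction — and it breaks down at exactly the two points those references are cited for. For part~(1), observing that the projection of the incidence correspondence is a hypersurface only identifies the \emph{zero set} of $\fE(I)\otimes_R K$; principality in the UFD $K[\bfU]$ requires the ideal itself to be unmixed of height one (no embedded primes), since an ideal whose radical is principal need not be principal. That unmixedness of the saturation-and-contraction defining $\fE(I)$ is the substantive content of Nesterenko's Proposition~2 and Philippon's Lemma~1.8, and your sketch supplies no argument for it; you locate the difficulty in the descent from $K[\bfU]$ to $R[\bfU]$ (where your Gauss/content argument is reasonable, and the torsion issue you flag is indeed where equidimensionality, i.e.\ $P_i\cap R=\{0\}$ for all associated primes, enters), but the field case is already not the formality your sketch suggests.

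The more concrete error is in your mechanism for part~(3), the identity $\fE(Q)=\fE(P)^{e(Q)}$ ``obtained by filtering $K[\bfZ]_P/Q$ by a composition series of length $e(Q)$''. A composition series has length equal to the \emph{length} $\ell(Q)$ of the primary component, not the exponent $e(Q)$, and these differ in general: for the fat point $P=(Z_1,Z_2)\subset K[Z_0,Z_1,Z_2]$ and $Q=P^2$ one has $e(Q)=2$ but $\ell(Q)=3$. The ``each composition factor contributes one factor of $\fE(P)$'' heuristic is the one valid for Chow forms/u-resultants, which do record lengths; the eliminant ideal defined by saturation records exponents. Indeed, a direct computation in this example (work in $K[\bfU,\bfZ]/Q[\bfU,\bfZ]\cong K[\bfU][Z_0]\oplus K[\bfU][Z_0]\bar Z_1\oplus K[\bfU][Z_0]\bar Z_2$ with $\bar Z_i\bar Z_j=0$, and solve $Z_0^kF=L\cdot(a+b\bar Z_1+c\bar Z_2)$ componentwise) gives $\fE(Q)=(U_0^2)=(L(\eta)^{e(Q)})$, whereas your filtration argument would output $(U_0^3)$. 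So that step, carried out as written, proves a false formula; establishing the statement with the exponent $e(Q)$ — and hence also your derivation of part~(4) from it — genuinely requires the finer analysis of Nesterenko's Corollary to Proposition~3 rather than a routine dévissage. (Your Galois-orbit/norm argument for part~(2) is fine in spirit and close to Philippon's.)
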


\begin{proof} These statements are either contained or can be
  immediately extracted from results in~\cite{Nes77,Phi86}.
  Precisely, the statement~\eqref{item:1}
  is~\cite[Proposition~2(1)]{Nes77} or~\cite[Lemma~1.8]{Phi86}. The
  statement~\eqref{item:7} is contained
  in~\cite[Proposition~1.3(ii)]{Phi86}. The statement~\eqref{item:2} follows
  from~\cite[Corollary to Proposition~3]{Nes77}. The last
  claim~\eqref{item:3} follows from~\eqref{item:2} and the proof
  of~\cite[Lemma~1.8]{Phi86}.
\end{proof}

\begin{lemma}
  \label{lem:Factor}
  Let notation be as in~\S\ref{sec:preliminaries}.  In particular,
  $V$ is the $0$-dimensional subvariety of $\A^{m}_{\Q}$ defined by
  the system $F_{i}$, $i=1,\ldots, s$, $\ov \cV$ its closure in
  $\P^{m}_{\Z}$, and $T$ the number of points in $V(\ov \Q)$.  Then
  $\fE(I(\ov \cV)) $ is a principal ideal and the eliminant
  $\Elim(I(\ov \cV)) \in \Z[\bfU]$ is well-defined. Moreover, this
  eliminant is a primitive polynomial and we have the factorisation
\begin{equation}
  \label{eq:2}
    \Elim(I(\ov \cV)) = \lambda \prod_{
      (\xi_{1},\ldots,\xi_{m}) \in V(\ov \Q)} (U_{0}+\xi_{1}U_{1}+\ldots+ \xi_{m}U_{m})
\end{equation}
with $\lambda\in \Q^{\times}$.
\end{lemma}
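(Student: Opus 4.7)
The plan is to apply Lemma~\ref{prop:1} with $R=\Z$ and $K=\Q$ to the homogeneous ideal $I(\ov\cV)\subseteq \Z[\bfZ]$, and then combine the resulting information with the structural properties of $\ov\cV$ provided by Lemma~\ref{lem:3}. First I verify the hypothesis: the generic fibre of $\ov\cV$ is the projective closure of the $0$-dimensional variety $V\subset \A^{m}_{\Q}$, so the extension $I(\ov\cV)\otimes_{\Z}\Q$ is equidimensional of dimension~$0$ in $\P^{m}_{\Q}$. Statement~\eqref{item:1} of Lemma~\ref{prop:1} then shows that $\fE(I(\ov\cV))$ is principal and that $\Elim(I(\ov\cV))\in \Z[\bfU]$ is well defined up to sign.

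To obtain the factorisation~\eqref{eq:2}, I apply statement~\eqref{item:3} of Lemma~\ref{prop:1} and get
$$
\Elim(I(\ov\cV)) = \lambda \prod_{\eta\in V(I(\ov\cV))(\ov\Q)} L(\eta)^{e_{\eta}}
\quad\text{with }\lambda\in \Q^{\times}.
$$
Two simplifications then finish the formula. First, statement~\eqref{item:4} of Lemma~\ref{lem:3} says that no irreducible component of $\ov\cV$ is contained in the hyperplane at infinity; since each component is a single closed point on the generic fibre, every $\eta$ has non-zero $Z_{0}$ coordinate and hence corresponds, after rescaling, to a unique $(\xi_{1},\ldots,\xi_{m})\in V(\ov\Q)$ with $L(\eta)=U_{0}+\xi_{1}U_{1}+\ldots+\xi_{m}U_{m}$. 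Second, statement~\eqref{item:4} also guarantees that $\ov\cV$ is reduced, so $I(\ov\cV)$ is radical; radicality is preserved under the localisation $\Z\to\Q$, hence $I(\ov\cV)\otimes_{\Z}\Q$ remains radical and every exponent $e_{\eta}$ equals~$1$. Substituting yields~\eqref{eq:2}.

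For the primitivity, I use the decomposition $\ov\cV=\bigcup_{C}\ov\cC$ from the proof of Lemma~\ref{lem:3}\eqref{item:4}, in which each $I(\ov\cC)\subseteq \Z[\bfZ]$ is prime, satisfies $I(\ov\cC)\cap \Z=(0)$ and $(Z_{0},\ldots,Z_{m})\not\subset I(\ov\cC)$. Because $\ov\cV$ is reduced, statement~\eqref{item:2} of Lemma~\ref{prop:1} gives
$$
\Elim(I(\ov\cV))=\pm\prod_{C}\Elim(I(\ov\cC)),
$$
and statement~\eqref{item:7} of the same lemma makes each factor irreducible in $\Z[\bfU]$. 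The condition $I(\ov\cC)\cap \Z=(0)$ means that the generic fibre $\ov\cC_{\Q}$ is a non-empty $0$-dimensional subscheme of $\P^{m}_{\Q}$, so statement~\eqref{item:3} forces $\Elim(I(\ov\cC))$ to have positive degree in $\bfU$. This rules out $\Elim(I(\ov\cC))$ being a prime integer, and so it must be a primitive polynomial; Gauss's lemma then makes the product primitive as well.

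The main obstacle is the primitivity, since the existence and the factorisation formula follow essentially mechanically from Lemma~\ref{prop:1}. The delicate step is ruling out the possibility that an irreducible factor $\Elim(I(\ov\cC))$ reduces to a prime integer, which is precisely where the flatness information $I(\ov\cC)\cap \Z=(0)$ coming from Lemma~\ref{lem:3}\eqref{item:4} becomes essential, by forcing the eliminant to have positive degree in the auxiliary variables~$\bfU$.
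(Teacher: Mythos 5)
Your proof is correct and follows essentially the same route as the paper: Lemma~\ref{prop:1}\eqref{item:1} for existence, the prime decomposition $I(\ov \cV)=\bigcap_{C} I(\ov\cC)$ coming from Lemma~\ref{lem:3}\eqref{item:4} combined with Lemma~\ref{prop:1}(\ref{item:7},\ref{item:2},\ref{item:3}) for primitivity, and radicality plus the absence of points at infinity for the factorisation~\eqref{eq:2}. Your treatment is just slightly more explicit (rescaling $L(\eta)$, radicality passing to $\Q$, Gauss's lemma) than the paper's, but the argument is the same.
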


\begin{proof}
Set $I=I(\ov \cV) $ for short. The subvariety of $\P^{m}_{\Q}$ defined
by this ideal coincides with $\iota(V)$, the image of $V$ under the standard inclusion
\eqref{eq:8}. This  subvariety is
of dimension 0, and it follows from Lemma~\ref{prop:1}\eqref{item:1} that
the eliminant ideal of $I$ is principal and that its eliminant
polynomial is well-defined. 

By Lemma~\ref{lem:3}\eqref{item:4}, the subscheme $\ov \cV \subset
\P^{m}_{\Z}$ is flat and reduced. Hence, $I=\bigcap_{i}P_{i}$ where
each $P_{i}$ is a prime ideal of $ \Z[\bfZ]$ which defines a
0-dimensional subvariety of $\P^{m}_{\Q}$ and $P_{i}\cap \Z=\{0\}$.
By Lemma~\ref{prop:1}(\ref{item:7},\ref{item:3}) applied to $P_{i}$,
each eliminant $\Elim(P_{i})$ is an nonconstant irreducible
polynomial.  Together with Lemma~\ref{prop:1}\eqref{item:2}, this
implies that $\Elim(I)$ is primitive.

The ideal $I$ is radical and no point of $V$ lies in the hyperplane at
infinity of $\P_{\Q}^{m}$.  Then the factorisation~\eqref{eq:2} follows immediately
from Lemma~\ref{prop:1}(\ref{item:7}, \ref{item:3}).
\end{proof}

 Set 
\begin{equation}
  \label{eq:EV}
E_{V}= \Elim(I(\ov \cV))
\end{equation}
for short.
Our next aim is to bound the height of this polynomial in terms
of the degree and the height of the  $F_{i}$'s. To this end, we first
recall the notion of Weil height of a finite subset of ${\ov\Q}^{m}$.

Given a number field $\K$, we denote by $M_{\K}$ its set of
places. For each $w\in M_{\K}$, we assume the corresponding
absolute value of $\K$, denoted by $|\cdot|_{w}$, extends either the
Archimedean or a $p$-adic absolute value of $\Q$, with their standard
normalisation. 

Let $\bfeta\in \P^{m}_{\Q}(\ov \Q)$ and choose a number field $\K$ such that
$\bfeta=(\eta_{0}:\ldots:\eta_{m})$ with $\eta_{i}\in \K$. 
The \emph{Weil
  height} of $\bfeta$ is defined as
$$
  \hcan (\bfeta)= \sum_{w\in M_{\K}}\frac{[\K_{w}:\Q_{w}]}{[\K:\Q]}
  \log\max\{|\eta_{0}|_{w},\ldots, |\eta_{m}|_{w}\},
$$
where $\K_{w}$ and $\Q_{w}$ denote the $w$-adic completion of $\K$ and
$\Q$, respectively. This formula does not depend neither
on the choice of homogeneous coordinates of $\bfeta$ nor on the number
field $\K$. Hence, it defines a function
$$
  \wh \h\colon \P^{m}_{\Q}(\ov \Q)\longrightarrow \R_{\ge0}.
$$
For a point of $\ov \Q^{m}$, we define its {Weil height} as the Weil
height of its image in $\P_{\Q}^{m}(\ov \Q)$ \emph{via} the
inclusion~\eqref{eq:8} and, for a finite subset of $\ov \Q^{m}$, we
define its {Weil height} as the sum of the Weil height of its points.

Since the $F_{i}$'s have integer
coefficients, the points of $V$ lie in $\ov\Q^{m}$. If we write 
$$
V(\C)=Z(F_{1},\ldots, F_{s}) =\{\bfxi_{1},\ldots, \bfxi_{T}\} 
$$
with $\bfxi_{j}\in \ov\Q^{m}  $, 
then the Weil height of this set is given by 
\begin{equation} \label{eq:1}
  \begin{split} 
  \hcan (V)
&=\sum_{i=1}^{T}\hcan(\bfxi_{i})\\ 
&= \sum_{j=1}^{T}\sum_{w\in M_{\K}}\frac{[\K_{w}:\Q_{w}]}{[\K:\Q]} \log\max\{1,|\xi_{j,1}|_{w},
  \ldots, |\xi_{j,m}|_{w}\}.    
  \end{split}
\end{equation}
We refer to~\cite{BoGu} for a more detailed background on heights.

The notion of Weil height of points extends to
 projective varieties. This extension is  usually called the
  ``normalised'' or ``canonical'' height and also denoted by the
  operator ${\hcan}$, see for instance~\cite[\S{}I.2]{PS08} 
  or~\cite[\S2.3]{DKS}. For an affine variety $Z\subset\A^{m}_{\Q}$,
  we respectively denote by $\deg Z$ and $\hcan(Z)$ the sum of the
  degrees and of the canonical heights of the
  Zariski closure in $\P^{m}_{\ov\Q}$ of its irreducible
  components. We also define the dimension of $Z$, denoted by $\dim
  Z$, as the maximum  of the dimensions of its irreducible
  components. 

The following is a version of the arithmetic B\'ezout inequality. 

\begin{lemma}
  \label{lem:ArithBezout}
  Let $Z\subset\A^{m}_{\Q}$ be a variety and $G_{i}\in \Z[\bfX]$, $i=1,\ldots, t$.  Set 
  $$d_{i}=\deg G_{i}, \qquad 
   h=\max_{i=1,\ldots, t}\h(G_{i}), \qquad m_{0}=\min\{\dim Z,m\},
   $$ and assume that
  $d_{1}\ge\ldots\ge d_{t}$. Then
\begin{multline*}
  \hcan(Z\cap V(G_{1},\ldots, G_{t})) \le \prod_{i=1}^{m_{0}} d_{i}\biggl(\hcan(Z) +
  \biggl(\sum_{i=1}^{m_{0}} \frac{1}{d_{i}} \biggr)  h \deg Z  \\+
  m_{0}\log(m+1) \deg Z\biggr).
\end{multline*}
\end{lemma}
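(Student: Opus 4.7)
The plan is to prove the estimate by iterating a single-hypersurface arithmetic B\'ezout inequality, in the spirit of~\cite{DKS}. The one-step input I would use reads: for an equidimensional variety $W\subset\A^m_{\Q}$ and $G\in\Z[\bfX]$ with $\delta=\deg G$ and $h_G=\h(G)$,
\begin{align*}
\deg(W\cap V(G))&\le \delta\deg W,\\
\hcan(W\cap V(G))&\le\delta\hcan(W)+h_G\deg W+\delta\log(m+1)\deg W,
\end{align*}
both bounds holding whether or not $V(G)$ meets $W$ properly. This is the sole substantive external ingredient; the remainder of the argument is essentially bookkeeping.

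I would then set $Z_0:=Z$ and $Z_i:=Z_{i-1}\cap V(G_i)$ for $i=1,\ldots,m_0$ (taking the $G_i$ in the decreasing order of their degrees, as given), and prove by induction on $i$ the pair of estimates
\begin{align*}
\deg Z_i&\le d_1\cdots d_i\,\deg Z,\\
\hcan Z_i&\le d_1\cdots d_i\Bigl(\hcan Z+h\deg Z\sum_{j=1}^{i}\tfrac{1}{d_j}+i\log(m+1)\deg Z\Bigr).
\end{align*}
The inductive step applies the single-hypersurface bound with $W=Z_{i-1}$ and $G=G_i$, and then combines with the degree estimate for $Z_{i-1}$. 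When the terms are collected, the factor $\prod_{j\le i} d_j$ emerges cleanly; the $\log(m+1)$-contributions accumulate with multiplicity equal to the number of iterations; and the $h\deg Z$ contribution produced at the $k$-th step carries the complementary product $\prod_{j\le i,\,j\neq k}d_j$, which upon factoring out $\prod_{j\le i}d_j$ is precisely $1/d_k$. At $i=m_0$ this yields the bound announced in the lemma for $\hcan(Z_{m_0})$.

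The remaining polynomials $G_{m_0+1},\ldots,G_t$ still need to be accommodated. Since $m_0=\min\{\dim Z,m\}=\dim Z$, in the generic case $\dim Z_{m_0}\le 0$, and further intersections with hypersurfaces can only remove finitely many points; by the additivity of $\hcan$ over $0$-dimensional components, this gives $\hcan(Z\cap V(G_1,\ldots,G_t))\le\hcan(Z_{m_0})$, which is the claim. The main obstacle is making this last step airtight when some intermediate intersection $Z_{i-1}\cap V(G_i)$ fails to be proper, so that $Z_{m_0}$ might still carry higher-dimensional components. I would resolve this either by working throughout with Fulton-style intersection cycles, so that $Z\cdot V(G_1)\cdots V(G_t)$ is a well-defined cycle to which the iterated estimate applies directly, or by decomposing $Z$ into irreducible components, applying the above iteration component by component up to the dimension of each, and reassembling by linearity of $\hcan$. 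Either route converts the dimension-theoretic subtlety into a purely algebraic identity and produces exactly the inequality of the lemma.
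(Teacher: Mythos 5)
Your one-step inequality and the inductive bookkeeping for $Z_{i}=Z_{i-1}\cap V(G_{i})$ are fine: the improper case of the one-step bound does follow from~\eqref{eq:11} componentwise, since a component contained in $V(G)$ passes through unchanged and satisfies the bound trivially. The genuine gap is your final step, the claim that $\hcan(Z\cap V(G_{1},\ldots,G_{t}))\le\hcan(Z_{m_{0}})$ because ``further intersections can only remove finitely many points''. The canonical height is not monotone under passing to subvarieties, and when some of the first $m_{0}$ intersections are improper, $Z_{m_{0}}$ retains positive-dimensional components on which $G_{m_{0}+1},\ldots,G_{t}$ can cut out points of arbitrarily large height. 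For instance, take $m=3$, $Z=\A^{3}_{\Q}$ (so $m_{0}=3$), $G_{1}=G_{2}=G_{3}=X_{1}^{D}$ with $D\ge 2$, and $G_{4}=X_{2}-N$, $G_{5}=X_{3}-N$ for a large integer $N$: then $Z_{3}=V(X_{1})$ has canonical height bounded independently of $N$, while $Z\cap V(G_{1},\ldots,G_{5})=\{(0,N,N)\}$ has height $\log N$. (The lemma itself is unaffected, since $h=\max_{i}\h(G_{i})=\log N$ enters the right-hand side.) Neither of your proposed patches resolves this: a Fulton-style intersection cycle of divisors is not an effective cycle dominating the components of the set-theoretic intersection created by improper behavior, so no ``iterated estimate'' applies to it directly, and decomposing $Z$ into irreducible components does not help, as the example above already has $Z$ irreducible.

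What is needed---and what the paper does by repeating the scheme of the proof of~\cite[Corollary~2.11]{KPS} with the canonical height and~\eqref{eq:11}---is an adaptive choice of polynomial rather than your fixed order. One argues by induction, and at each stage, for each irreducible component $C$ of the current intersection, one picks some $G_{i}$ among \emph{all} of $G_{1},\ldots,G_{t}$ that does not vanish identically on $C$ (if every $G_{i}$ vanishes on $C$, then $C$ is already a component of $Z\cap V(G_{1},\ldots,G_{t})$ and is retained as is); that intersection is proper, so~\eqref{eq:11} applies and the dimension drops. Hence each branch uses at most $m_{0}$ proper intersections, with possibly different subsets of the $G_{i}$ on different branches, and since $d_{1}\ge\cdots\ge d_{t}\ge 1$, any product of at most $m_{0}$ of the $d_{i}$, and the complementary products multiplying $h$, are dominated by $\prod_{i=1}^{m_{0}}d_{i}$ and by $\prod_{i=1}^{m_{0}}d_{i}\sum_{i=1}^{m_{0}}d_{i}^{-1}$ respectively, which is exactly how the stated bound with the $m_{0}$ largest degrees is assembled. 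Your fixed-order iteration through $G_{1},\ldots,G_{m_{0}}$ cannot be repaired into this: stopping at $m_{0}$ loses control of the later polynomials, while running through all $t$ of them would replace $\prod_{i=1}^{m_{0}}d_{i}$ by $\prod_{i=1}^{t}d_{i}$ and overshoot the bound.
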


\begin{proof}
  Let $C\subseteq \A^{m}_{\Q}$ be an irreducible subvariety and $
  F\in\Z[\bfX] $ a polynomial such that the
  hypersurface $V(F)\subseteq \A^{m}_{\Q}$ intersects $C$ properly.
  From~\cite[Theorem~2.58]{DKS}, we deduce that
\begin{equation}
  \label{eq:11}
  \hcan(C\cap V(F))\le \hcan(C) \deg F 
 + (\h(F)+
  \deg F  \log(m+1) ) \deg C.
\end{equation}
The stated bound now follows by repeating the scheme of the proof
of~\cite[Corollary~2.11]{KPS} for the canonical height instead of the
Fubini-Study one, and using~\eqref{eq:11} instead of the inequality in
the second line of~\cite[Page~555]{KPS}. 
\end{proof}

Let  $F_{1},\ldots, F_{s}\in \Z[\bfX]$  and let
$V\subseteq\A^{m}_{\Q}$ be the 0-dimensional subvariety defined by this
system of polynomials, as in~\S\ref{sec:preliminaries}. 
Also set 
$$
d=\max_{i=1,\ldots,s}\deg F_{i} \mand h=\max_{i=1,\ldots,s}\h(F_{i}).
$$

\begin{cor}
\label{cor:3}
Write $ V(\C) =\{\bfxi_{1},\ldots, \bfxi_{T}\} $
with $\bfxi_{j}\in \ov\Q^{m}$.  Then
$$
T \le d^{m}\quad \text{ and } \quad   
\sum_{i=1}^{T}\hcan (\bfxi_{i}) \le m d^{m-1}(h+d\log(m+1)).
$$
\end{cor}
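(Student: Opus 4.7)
The plan is to obtain both estimates as direct consequences of the arithmetic B\'ezout inequality (Lemma~\ref{lem:ArithBezout}) applied to the ambient affine space $Z=\A^{m}_{\Q}$, whose invariants are $\dim Z=m$, $\deg Z=1$ and $\hcan(Z)=0$. Since $V=Z\cap V(F_{1},\ldots,F_{s})$ is $0$-dimensional by hypothesis, we have $m_{0}=\min\{\dim Z,m\}=m$. I would reindex so that $d_{1}:=\deg F_{1}\ge\ldots\ge d_{s}:=\deg F_{s}$, and observe $d_{i}\le d$ throughout.

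For the first inequality, the classical affine B\'ezout theorem (applied, if needed, to $m$ generic linear combinations of the $F_{i}$'s cutting out $V$ properly) gives
$$
T=\#V(\C)=\deg V \le \prod_{i=1}^{m} d_{i} \le d^{m}.
$$
For the height bound, Lemma~\ref{lem:ArithBezout} yields
$$
\hcan(V)\le \prod_{i=1}^{m}d_{i}\left(\left(\sum_{i=1}^{m}\frac{1}{d_{i}}\right)h+m\log(m+1)\right).
$$
The key algebraic manipulation is the elementary identity
$$
\prod_{i=1}^{m}d_{i}\cdot\sum_{i=1}^{m}\frac{1}{d_{i}}=\sum_{j=1}^{m}\prod_{i\neq j}d_{i}\le \sum_{j=1}^{m}d^{m-1}=md^{m-1},
$$
combined with $\prod_{i=1}^{m}d_{i}\le d^{m}$ for the remaining summand. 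Substituting gives
$$
\hcan(V)\le md^{m-1}h+md^{m}\log(m+1)=md^{m-1}\bigl(h+d\log(m+1)\bigr).
$$
By the definition of the canonical height of a finite set recalled in~\eqref{eq:1}, $\hcan(V)=\sum_{i=1}^{T}\hcan(\bfxi_{i})$, which yields the second bound.

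Since this is essentially a bookkeeping computation once Lemma~\ref{lem:ArithBezout} is in hand, I do not anticipate any serious difficulty. The only point requiring care is the implicit dependence on having $m_{0}=m$ in the application of the lemma when $s$ may be much larger than $m$: one relies on the convention $m_{0}\le m$ in that lemma, which is consistent with selecting $m$ polynomials (or generic linear combinations of the $F_{i}$'s) that cut out $V$ with proper intersection at each step. This is legitimate precisely because $V$ is already $0$-dimensional, and generic combinations do not increase $d$ and affect $h$ only by $O(\log s)$, which does not appear in the final bound.
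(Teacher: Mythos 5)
Your proof is correct and follows essentially the same route as the paper: the classical B\'ezout theorem for the bound on $T$, and Lemma~\ref{lem:ArithBezout} applied to $Z=\A^{m}_{\Q}$ with $\deg Z=1$, $\hcan(Z)=0$, $m_{0}=m$ and $d_{i}\le d$ for the height bound. Your closing concern about $s>m$ is unnecessary, since Lemma~\ref{lem:ArithBezout} already applies to all $s$ polynomials with the product and sum taken only over the $m_{0}$ largest degrees, so no passage to generic linear combinations is needed.
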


\begin{proof}
  The first inequality is given by the B\'ezout theorem.
  For the rest, we have that $\deg \A^{m}_{\Q}=1$ and,
  by~\cite[Proposition~2.39(4)]{DKS}, $\hcan(\A^{m}_{\Q})=0$. The
  statement then follows from Lemma~\ref{lem:ArithBezout} and the
  inequalities $0\le m$ and $d_{i}\le d$.
\end{proof}

\begin{lemma} 
  \label{prop:3}
With notation as above, let $E_{V}$ denote the eliminant of the ideal $I(\ov \cV)$ as
in~\eqref{eq:EV}. Then
$$
 \deg_{U_{0}} E_V
  =\deg E_V = T \le d^{m}     
$$
and
$$
\h(E_V) \le m d^{m-1}h+(m+1)d^{m}\log(m+1).    
$$
\end{lemma}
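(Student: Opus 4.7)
The plan is to read off the degree statements directly from the factorization provided by Lemma~\ref{lem:Factor}, and then bound the height via a place-by-place analysis combined with the product formula, using the Weil height bound from Corollary~\ref{cor:3}.

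\smallskip

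For the degree assertions, write the factorization from Lemma~\ref{lem:Factor} as
$$
E_V = \lambda \prod_{j=1}^{T} L_j(\bfU), \qquad L_j(\bfU) = U_0 + \xi_{j,1}U_1 + \ldots + \xi_{j,m}U_m,
$$
with $\lambda \in \Q^{\times}$ and $V(\ov\Q) = \{\bfxi_1,\ldots,\bfxi_T\}$. Each $L_j$ has total degree $1$ and is monic in $U_0$, so the product has total degree $T$ and $\deg_{U_0}$ equal to $T$. The bound $T \le d^m$ is the B\'ezout part of Corollary~\ref{cor:3}.

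\smallskip

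For the height bound, let $\K\subset\ov\Q$ be a number field containing all coordinates $\xi_{j,i}$, and for each place $w\in M_{\K}$ let $\|F\|_w$ denote the maximum of the $w$-adic absolute values of the coefficients of a polynomial $F\in\K[\bfU]$. Since $E_V\in \Z[\bfU]$ is primitive (Lemma~\ref{lem:Factor}), one has $\h(E_V) = \sum_{w \in M_\K} \frac{[\K_w:\Q_w]}{[\K:\Q]}\log\|E_V\|_w$. Applying the product formula $\sum_w [\K_w:\Q_w]/[\K:\Q]\log|\lambda|_w = 0$ and the identity $\|E_V\|_w = |\lambda|_w\, \bigl\|\prod_j L_j\bigr\|_w$ yields
$$
\h(E_V) = \sum_{w\in M_\K}\frac{[\K_w:\Q_w]}{[\K:\Q]}\log\Bigl\|\prod_{j=1}^T L_j\Bigr\|_w.
$$
For a non-Archimedean place $w$, the ultrametric inequality gives $\bigl\|\prod_j L_j\bigr\|_w \le \prod_j \|L_j\|_w = \prod_j \max(1,|\xi_{j,1}|_w,\ldots,|\xi_{j,m}|_w)$. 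For an Archimedean place, expanding the product and counting monomial contributions shows that the coefficient of any $U_0^{a_0}\cdots U_m^{a_m}$ is a sum of at most $\binom{T}{a_0,\ldots,a_m} \le (m+1)^T$ products of $\xi$'s, so
$$
\Bigl\|\prod_{j=1}^T L_j\Bigr\|_w \le (m+1)^T \prod_{j=1}^{T}\max\bigl(1,|\xi_{j,1}|_w,\ldots,|\xi_{j,m}|_w\bigr).
$$

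\smallskip

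Summing over all places and recognising the definition~\eqref{eq:1} of the Weil height, the Archimedean correction contributes $T\log(m+1)$ (by $\sum_{w \mid \infty} [\K_w:\Q_w]/[\K:\Q] = 1$), giving
$$
\h(E_V) \le T\log(m+1) + \sum_{j=1}^T\hcan(\bfxi_j).
$$
Substituting the bounds $T\le d^m$ and $\sum_j \hcan(\bfxi_j) \le md^{m-1}(h+d\log(m+1))$ from Corollary~\ref{cor:3} collapses the right-hand side to $md^{m-1}h + (m+1)d^m\log(m+1)$, as required. The only subtle point, really, is the bookkeeping at Archimedean places: one must convince oneself that the multinomial factor is no worse than $(m+1)^T$, which is why the factor $(m+1)$ (rather than $2$ or similar) appears in the final bound.
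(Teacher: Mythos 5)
Your proof is correct and follows essentially the same route as the paper: read off the degrees from the factorisation of Lemma~\ref{lem:Factor}, then combine a place-by-place estimate with the primitivity of $E_V$, the product formula, the definition~\eqref{eq:1} of the Weil height, and Corollary~\ref{cor:3}. The only (harmless) difference is cosmetic: the paper handles the Archimedean places via sub-multiplicativity of the $\ell^{1}$-norm of the linear factors, while you bound the max-norm of the product directly by a multinomial count; both yield the same $T\log(m+1)$ correction term.
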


\begin{proof} 
Set
$$
Q = \prod_{j=1}^{T} (U_{0}+\xi_{j,1}U_{1}+\ldots+\xi_{j,m}U_{m}) \in
\Q[\bfU]
$$
so that, by the factorisation~\eqref{eq:2} of Lemma~\ref{lem:Factor} we have $ E_{V}= \lambda Q $ with
$\lambda\in \Q^{\times}$.  The formula for the degrees of the
eliminant follows readily from this.

For a polynomial $F$ over $ \Q$, we denote
by $\|F\|_{\infty,1}$ the $\ell^{1}$-norm of its vector of coefficients
with respect to the Archimedean absolute value of $\Q$. Then
\begin{equation}
  \label{eq:3}
\h( E_{V})\le \log\|E_{V}\|_{\infty,1} = \log \|Q\|_{\infty,1}+\log |\lambda|_{\infty}.  
\end{equation}
Since $E_{V}$ is primitive, for 
$v\in M_{\Q}\setminus \{\infty\}$,
$$
  0= \log\|E_{V}\|_{v}= \log\|Q\|_{v}+\log|\lambda|_{v},
$$
where $\|Q\|_{v}$ is defined as the maximum norm of  the  vector of the 
coefficients of $Q$ with respect to the absolute value $|\cdot|_{v}$. Summing up over all places and using the product formula, we obtain
\begin{equation} \label{eq:5}
\log\|E_{V}\|_{\infty,1} =  
\log\|Q\|_{\infty,1} +\sum_{v\in M_{\Q}\setminus \{ \infty\} } \log\|Q\|_{v}.
\end{equation}

Let $\K$ be a number field of definition of $\bfxi_{1},\ldots,
\bfxi_{T}$, and denote by $M_{\K}^{\infty}$ and $M_{\K}^{0}$ the set
of Archimedean and non-Archimedean places of~$\K$, respectively.  For
each $w\in M_{\K}^{\infty}$ and a polynomial $F$ over $ \K$, then we
denote by $\|F\|_{w,1}$ the $\ell^{1}$-norm of its vector of
coefficients with respect to the absolute value $|\cdot|_{w}$.  Then,
by the compatibility between places and finite extensions,
  \begin{equation}
 \begin{split}
 \label{eq:7}
\log&\|Q\|_{\infty,1} +\sum_{v\in M_{\Q}\setminus \{ \infty\}  } \log\|Q\|_{v}  \\
& =  
\sum_{w\in M_{\K}^{\infty}}\frac{[\K_{w}:\Q_{w}]}{[\K:\Q]}
\log\|Q\|_{w,1}   + \sum_{w\in M_{\K}^{0}}\frac{[\K_{w}:\Q_{w}]}{[\K:\Q]}   \log\|Q\|_{w}.
     \end{split}
\end{equation}
For  $w\in M_{K}^{\infty}$, by the sub-additivity of
$\log\|\cdot\|_{w,1}$,
  \begin{equation}
 \begin{split}
 \label{eq:9}
  \log&\|Q\|_{w,1}  \le \sum_{j=1}^{T} \log
  \|U_{0}+\xi_{j,1}U_{1}+\ldots+\xi_{j,m}U_{m}\|_{w,1}  \\
&\quad  \le  \sum_{j=1}^{T} \log \max\{1,|\xi_{j,1}|_{w},
  \ldots, |\xi_{j,m}|_{w}\} +T\log(m+1).
     \end{split}
\end{equation}
On the other hand, for $w\in M_{\K}^{0}$, 
  \begin{equation}
 \begin{split}
 \label{eq:10}
    \log\|Q\|_{w} = \sum_{j=1}^{T} \log &
  \|U_{0}+\xi_{j,1}U_{1}+\ldots+\xi_{j,m}U_{m}\|_{w} \\
  & =   \sum_{j=1}^{T} \log \max\{1,|\xi_{j,1}|_{w},
  \ldots, |\xi_{j,m}|_{w}\}.
     \end{split}
\end{equation}
If follows from~\eqref{eq:3}, \eqref{eq:5}, \eqref{eq:7},
\eqref{eq:9}, \eqref{eq:10} and~\eqref{eq:1} that
$$
\h(E_V)\leq \sum_{i=1}^{T}\hcan(\bfxi_{i})+T\log(m+1).
$$
The statement then follows from the bound for the Weil height in
Corollary~\ref{cor:3}.
\end{proof}

Set
$$
  L^{\aff}=U_{0}+U_{1}X_{1}+\ldots+U_{n}X_{n}\in \Z[\bfU,\bfX].
$$
By construction, $E_{V}$
vanishes on the zero locus of $F_{1},\ldots, F_{s}$ and $ L^{\aff}$ in
$\C^{m+1}\times \C^{m}$. 
By Hilbert's Nullstellensatz, there exist
$\alpha, N\in \N$ such that
$$
  \alpha E_{V}^{N}\in (F_{1},\ldots, F_{s}, L^{\aff}) \subseteq  \Z[\bfU,\bfX]. 
$$
We use the effective version of this result~\cite[Theorem~2]{DKS} to
bound the integer~$\alpha$.

\begin{lemma}
  \label{lem:8} 
With notation as above, there exist $\alpha, N\in \N$ such that
$$
  \alpha E_{V}^{N}\in (F_{1},\ldots, F_{s}, L^{\aff}) \subseteq \Z[\bfU,\bfX]
$$ 
and
$$
    \log \alpha\le  A_1(m)d^{m+\min\{s,2m+1\}}h + A_2(m,s) d^{m+\min\{s,2m+2\}} 
$$
with
  \begin{equation*}
 \begin{split}
A_1(m)& =  10m  + 4,\\
A_2(m,s)& = (54m+98)\log(2m+5)  +24(m+1)\log \max\{1,s-2m\}.
 \end{split}
 \end{equation*}
\end{lemma}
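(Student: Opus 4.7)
The plan is to apply the effective arithmetic Nullstellensatz of~\cite{DKS} to the system $F_1,\ldots,F_s,L^{\aff}$ in the $2m+1$ variables $(\bfU,\bfX)$, together with the polynomial $E_V\in\Z[\bfU]$.

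First I would verify the geometric hypothesis that $E_V$ vanishes on the zero set in $\C^{2m+1}$ of $F_1,\ldots,F_s,L^{\aff}$. Indeed, at any such common zero $(\bfu_0,\bfx_0)$, the point $\bfx_0$ lies in $V(\C)=\{\bfxi_1,\ldots,\bfxi_T\}$, so $\bfx_0=\bfxi_j$ for some $j$; the vanishing $L^{\aff}(\bfu_0,\bfxi_j)=0$ then reads as the vanishing at $\bfu_0$ of the $j$-th linear factor in the factorisation~\eqref{eq:2} of Lemma~\ref{lem:Factor}, hence $E_V(\bfu_0,\bfx_0)=0$. Consequently, Hilbert's Nullstellensatz (in its classical form) already yields existence of $\alpha, N\in\N$ with $\alpha E_V^N\in (F_1,\ldots, F_s,L^{\aff})\subseteq\Z[\bfU,\bfX]$.

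To obtain the quantitative estimate on $\log \alpha$, I would invoke~\cite[Theorem~2]{DKS} with the following inputs: ambient dimension $n=2m+1$; number of polynomials $r=s+1$; degrees bounded by $d$ (assuming $d\ge 2$, so that $\deg L^{\aff}\le d$); heights bounded by $h$; degree of $E_V$ bounded by $d^m$ and height of $E_V$ bounded by $md^{m-1}h+(m+1)d^m\log(m+1)$, by Lemma~\ref{prop:3}. The core of the bound in~\cite[Theorem~2]{DKS} is a B\'ezout-type product of the $\min\{r,n\}=\min\{s+1,2m+1\}$ largest input degrees, multiplied by $\deg(E_V)$ and combined linearly with $\max\{h,\h(E_V)\}$ plus logarithmic corrections depending on $n$ and $r$.

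The final bookkeeping distinguishes two regimes. When $s\le 2m$, the B\'ezout product is $2\cdot d^{s}$ (the factor $2$ coming from $L^{\aff}$), and multiplication by $\deg E_V \le d^m$ gives a factor $d^{m+s}=d^{m+\min\{s,2m+1\}}$; the $h$-linear part $md^{m-1}h$ of $\h(E_V)$ then combines with the B\'ezout product to yield the contribution $A_1(m)d^{m+\min\{s,2m+1\}}h$, while the $d^m\log(m+1)$-part yields the second contribution with exponent $m+\min\{s,2m+2\}$. When $s\ge 2m+1$, the B\'ezout product saturates at $d^{2m+1}$, producing the exponents $3m+1$ and $3m+2$ and absorbing the combinatorial cost of selecting $2m+1$ polynomials out of $s$ into the summand $24(m+1)\log\max\{1,s-2m\}$ of $A_2(m,s)$. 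Collecting everything and using $\log(m+1)\le \log(2m+5)$ to fold all logarithmic corrections into the stated prefactors produces the claimed bound.

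The main obstacle is therefore not conceptual but computational: carrying out this case analysis precisely enough to extract the exact constants $A_1(m)=10m+4$ and $A_2(m,s)=(54m+98)\log(2m+5)+24(m+1)\log\max\{1,s-2m\}$ from the general bound in~\cite[Theorem~2]{DKS}, and checking that both exponents $m+\min\{s,2m+1\}$ and $m+\min\{s,2m+2\}$ arise correctly from the split of $\h(E_V)$ into its $h$-linear part and its $\log(m+1)$-linear part.
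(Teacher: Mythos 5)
Your proposal follows essentially the same route as the paper: the vanishing of $E_{V}$ on the common zero set of $F_{1},\ldots,F_{s},L^{\aff}$ is exactly what the factorisation~\eqref{eq:2} of Lemma~\ref{lem:Factor} gives, and the quantitative bound is obtained by applying~\cite[Theorem~2]{DKS} on $\A^{2m+1}_{\Q}$ to $E_{V},F_{1},\ldots,F_{s},L^{\aff}$, fed with the estimates $\deg E_{V}\le d^{m}$ and $\h(E_{V})\le md^{m-1}h+(m+1)d^{m}\log(m+1)$ from Lemma~\ref{prop:3}. So the conceptual skeleton is the paper's.

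Two points are missing or imprecise. First, the case $d=1$: you dispose of $\deg L^{\aff}$ by ``assuming $d\ge2$'', but the lemma is stated for all $d$ and $\deg L^{\aff}=2$ always; the paper treats $d=1$ separately (Cramer's rule applied to the linear system $F_{i}=0$), and some such argument is needed before you may restrict to $d\ge 2$. Second, the entire content of the lemma is the explicit constants, and your proposal asserts rather than derives them; moreover the sketch of the bookkeeping is slightly off in ways that matter for the arithmetic. In the notation of~\cite[Theorem~2]{DKS} the parameters $n$ and $r$ are both equal to $2m+1$ (the dimension of the ambient affine space), not $r=s+1$, and the regime split is at $s+1\le 2m+2$ versus $s+1>2m+2$, giving $D\le 2d^{\min\{s,2m+2\}}$ and $D\Sigma\le(2m+2)d^{\min\{s,2m+1\}}h$. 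In particular the $h$-linear term does not come only from the $h$-part of $\h(E_{V})$: it is the sum of $6md^{m-1+\min\{s,2m+2\}}h$ (from $\h(E_{V})$ times the B\'ezout factor) and $2(2m+2)d^{m+\min\{s,2m+1\}}h$ (from $D\Sigma$, i.e.\ the heights of the $F_{i}$, times $\deg E_{V}$), and only after bounding $d^{m-1+\min\{s,2m+2\}}\le d^{m+\min\{s,2m+1\}}$ does one get $A_{1}(m)=6m+(4m+4)=10m+4$. Without carrying out this combination (and the analogous one for $A_{2}(m,s)$, where the term $24(m+1)\log\max\{1,s-2m\}$ arises as $3(2m+2)\log\max\{1,s-2m\}$ multiplied by $4d^{\min\{s,2m+2\}}\deg E_{V}$), the stated inequality is not yet proved.
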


\begin{proof}
The system of polynomials $F_{1},\ldots,F_{s}, L^{\aff}$ verifies the bounds
$$
  \deg F_{j} \le d, \quad \deg L^{\aff} =2, \quad \h(F_{j})\le h, \quad
  \h(L^{\aff})=0. 
$$
The case when $d=1$ can be easily treated applying Cramer's rule to
the  system of linear equations $F_{i}=0$, $i=1,\ldots, s$. Hence, we
assume that $d\ge2$. 

We apply~\cite[Theorem~2]{DKS} to the variety $ \A^{2m+1}_{\Q}$ and
the polynomials $E_{V}$, $F_{1},\ldots,F_{s}$ and $ L^{\aff}$.  From the
statement of~\cite[Theorem~2]{DKS}, we consider the
parameter $D$ and the sum over $\ell$ in the bound on $\alpha$, which
we denote by $\Sigma$. In our situation, the 
parameters  $n$ and $r$ in the notation of this theorem, are equal to $2m+1$. 

For $s+1\le 2m+2$, we have that $ D \le 2 d^s $ and $ D\Sigma \le
2sd^{s-1}h + d^sh\le (s+1)d^sh$ whereas, for $s+1> 2m+2$, we have that
$D \le d^{2m+2}$ and $ D\Sigma \le (2m+2)h d^{2m+1}$.  In either case,
$$
D \le  2d^{\min\{s, 2m+2\}}  \mand D\Sigma \le  (2m+2) d^{\min\{s, 2m+1\}} h.
$$
Thus, since
$\deg \A^{2m+1}_{\Q}= 1 $ and $\hcan (\A^{2m+1}_{\Q})=0$, it follows
that
\begin{align*}
  \log \alpha &\le 2D
\deg E_{V} \bigl(\frac{3\h(E_{V})}{2\deg E_{V}} +  \Sigma \\
     & \quad + \((12m+6)+17\)\log((2m+1)+4) 
     \\ & \quad +3(2m+2)\log(\max\{1,s-2m\})\bigr) \\
& \le 6 d^{\min\{s,2m+2\}} \h(E_{V}) + 2(2m+2) d^{\min\{s, 2m+1\}} h \deg E_{V}\\
& \qquad+  4d^{\min\{s, 2m+2\}}\deg E_{V}
 \bigl( (12m+23)\log(2m+5) 
 \\& \qquad\quad +6(m+1)\log \max\{1,s-2m\}\bigr)  .
\end{align*}
Applying Lemma~\ref{prop:3}, we obtain
\begin{align*}
  \log \alpha  & \le     6 d^{\min\{s,2m+2\}}  \(m d^{m-1}h+(m+1)d^{m}\log(m+1)\)\\
& \quad+ 2(2m+2) d^{m+\min\{s, 2m+1\}} h \\
& \qquad +  4d^{m+\min\{s, 2m+2\}} \bigl((12m+23)\log(2m+5) 
 \\ & \qquad \quad+6(m+1)\log \max\{1,s-2m\}\bigr).
\end{align*}
The  coefficient multiplying $h$ in the expression above can be
bounded~by
\begin{multline*}
 6  d^{m+\min\{s,2m+2\}-1} m  + 2 d^{m+\min\{s, 2m+1\}} (2m+2)\\
  \le  6 d^{m+\min\{s,2m+1\}} m  + 2d^{m+\min\{s, 2m+1\}}
 (2m+2) \\= A_1(m,s) d^{m+\min\{s,2m+1\}}.
\end{multline*}
By replacing  $\log(m+1)$ with $\log(2m+5)$ and after simple calculations,
 we obtain the desired expression for  $A_2(m,s)$. 
\end{proof}

We now recall the standard bound for the height of the composition of
polynomials with integer coefficients, see, for
instance,~\cite[Lemma~1.2(1.c)]{KPS}.

\begin{lemma}\label{hh}
Let $F\in\Z[Y_1,\ldots, Y_\ell]$ and $G_1,\ldots,
G_\ell\in\Z[\bfX]$. Set 
$$
d=\max_{i=1,\ldots,\ell}\deg G_i \mand h=\max_{i=1,\ldots,\ell}\h(G_i).
$$
Then
$$
  \h\(F(G_1,\ldots, G_\ell)\)\leq \h(F) +\deg F \(h
+\log(\ell+1)+  d \log(m+1)\).
$$
\end{lemma}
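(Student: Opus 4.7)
The plan is to bound everything through the $\ell^{1}$-norm on coefficient vectors, which is submultiplicative under polynomial multiplication while still dominating the $\ell^{\infty}$-norm (i.e.\ the exponential of the height). Writing $F = \sum_{|\alpha|\le \deg F} a_\alpha \bfY^\alpha$ with $a_\alpha \in \Z$ and $|a_\alpha| \le e^{\h(F)}$, I would first expand
\[
F(G_1,\ldots, G_\ell) = \sum_\alpha a_\alpha \, G_1^{\alpha_1}\cdots G_\ell^{\alpha_\ell},
\]
and then, using $\|\cdot\|_\infty \le \|\cdot\|_1$ on the left-hand side together with submultiplicativity of $\|\cdot\|_1$ on the right, estimate
\[
\|F(G_1,\ldots, G_\ell)\|_\infty \;\le\; \sum_\alpha |a_\alpha| \prod_{i=1}^\ell \|G_i\|_1^{\alpha_i}.
\]

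To close the estimate I would bound the two auxiliary quantities separately. Each $G_i \in \Z[\bfX]$ has at most $\binom{m+d}{m}$ monomials, each of absolute value at most $e^h$, and by the elementary bound $\binom{m+d}{m} \le (m+1)^d$ (a short induction on $d$), one gets $\|G_i\|_1 \le e^h (m+1)^d$. Applying the same combinatorial estimate in the $\ell$ variables $\bfY$, the number of indices $\alpha$ with $|\alpha|\le \deg F$ is at most $(\ell+1)^{\deg F}$. Since $|\alpha|\le \deg F$ in every term, plugging these into the previous display gives
\[
\|F(G_1,\ldots,G_\ell)\|_\infty \;\le\; e^{\h(F)} \, (\ell+1)^{\deg F} \, \bigl(e^h (m+1)^d\bigr)^{\deg F},
\]
and taking logarithms yields the stated inequality. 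There is no substantive obstacle here: the only non-trivial ingredient is the elementary binomial bound $\binom{m+d}{m} \le (m+1)^d$, which is entirely standard and is applied in the same form both to $F$ and to each $G_i$.
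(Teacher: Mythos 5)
Your argument is correct: expanding $F$ monomial by monomial, using the triangle inequality together with submultiplicativity of the coefficient $\ell^{1}$-norm, and counting monomials via $\binom{m+d}{m}\le (m+1)^{d}$ (and likewise $\binom{\ell+\deg F}{\ell}\le(\ell+1)^{\deg F}$) yields exactly the stated bound after taking logarithms. The paper itself gives no proof of this lemma --- it is quoted from~\cite[Lemma~1.2(1.c)]{KPS} --- and your derivation is precisely the standard norm-comparison argument behind that reference, so nothing further is needed.
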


\begin{lemma}
  \label{lem:9}
  Let notation be as above. Then there exists $\beta\in\N$ such that
$$ 
  \log \beta\le B_1(m)d^{2m-1} h+ B_2(m)d^{2m}
$$
with
$$
B_1(m) =  2 m \mand 
B_2(m) =   (2m+4)\log(m+1) +  4m+2,
$$
 such that, if $p$ is a prime number not dividing $ \beta$, then the
 reduction of $E_{V}$ modulo $p$ is a squarefree polynomial of degree
 $T$ in the variable $U_{0}$.
\end{lemma}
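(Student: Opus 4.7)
The plan is to define $\beta$ from the leading coefficient of $E_V$ in $U_0$ together with the content of its $U_0$-discriminant. Writing $E_V=\sum_{k=0}^T A_k(U_1,\dots,U_m)\,U_0^k$, Lemma~\ref{prop:3} gives $\deg_{U_0}E_V=T$, so $\lambda:=A_T$ is a nonzero integer (since $E_V\in\Z[\bfU]$ is primitive by Lemma~\ref{lem:Factor}). Let $\Delta:=\mathrm{disc}_{U_0}(E_V)\in\Z[U_1,\dots,U_m]$. From the factorization \eqref{eq:2},
$$
\Delta=\lambda^{2T-2}\prod_{1\le j<k\le T}\Bigl(\sum_{i=1}^m(\xi_{j,i}-\xi_{k,i})U_i\Bigr)^{\!2},
$$
which is nonzero since the $T$ points $\bfxi_j\in V(\ov\Q)$ are pairwise distinct; hence the content $c\in\N$ of $\Delta$ is well-defined, and I set $\beta:=\lambda\,c$.

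To verify that $\beta$ works: if $p\nmid\beta$, then $p\nmid\lambda$, so $E_V\bmod p$ still has degree $T$ in $U_0$; and $p\nmid c$, so $\Delta\not\equiv0\pmod p$. The discriminant in $U_0$ of $E_V\bmod p$ computed over $\F_p(U_1,\dots,U_m)$ equals $\Delta\bmod p$ up to a nonzero scalar factor, hence is nonzero, and therefore $E_V\bmod p$ is squarefree of degree $T$ in $U_0$, as required.

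For the quantitative bound, I would use the identity $\lambda\Delta=\pm\Res_{U_0}(E_V,\partial_{U_0}E_V)$, realized as the $(2T-1)\times(2T-1)$ Sylvester determinant. Viewing this as the specialization $D(A_0,\dots,A_T)$ of the universal Sylvester polynomial $D\in\Z[Y_0,\dots,Y_T]$ of total degree $2T-1$ and height $\h(D)\le(2T-1)(\log(2T-1)+\log T)$, Lemma~\ref{hh} applied to the composition, together with $\log(\lambda c)\le\log\|\lambda\Delta\|_\infty$, yields
$$
\log\beta\le\h(D)+(2T-1)\bigl(\h(E_V)+\log(T+2)+T\log(m+1)\bigr).
$$
Substituting $T\le d^m$ and Lemma~\ref{prop:3}'s bound $\h(E_V)\le md^{m-1}h+(m+1)d^m\log(m+1)$, the coefficient of $h$ collects into $(2T-1)md^{m-1}\le2md^{2m-1}$, recovering $B_1(m)=2m$; while the leading $d^{2m}\log(m+1)$ contribution assembles as $(2(m+1)+2)d^{2m}\log(m+1)=(2m+4)d^{2m}\log(m+1)$, matching the principal term of $B_2(m)$.

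The hard part will be absorbing the remaining lower-order corrections — those involving $\log T$, $\log(2T-1)$, $\log(T+2)$, and $\log d$ — into the $4m+2$ additive term of $B_2(m)$, via elementary inequalities like $\log d\le d\le d^m$ valid for $d,m\ge1$. This is routine bookkeeping but requires care to match the constants exactly as stated.
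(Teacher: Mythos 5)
Your proposal follows essentially the same route as the paper's proof: there $\beta$ is taken to be the leading $U_0$-coefficient of $E_V$ times a nonzero coefficient of $\Res_{U_0}(E_V,\partial E_V/\partial U_0)$, the height of the generic Sylvester resultant is bounded (via~\cite{Som04}), and Lemma~\ref{hh} is applied to the specialization, exactly as you do with the discriminant and its content. The deferred bookkeeping does close: the leftover terms are at most $2d^m\bigl(2\log 2+3m\log d\bigr)$, which is below $(4m+2)d^{2m}$ using $\log d\le d/2$ and $d^{m+1}\le d^{2m}$ for $d,m\ge 2$ (the case $d=1$ being handled trivially, as in the paper).
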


\begin{proof} By Lemma~\ref{prop:3}, $\deg_{U_{0}}E_V=T$. Let
  $\beta_{0}$ be the coefficient of the monomial $U_{0}^{T}$ in
  $E_{V}$.  If $p\nmid \beta_{0}$, then reduction of $E_V$ modulo $p$
  has also degree $T$ in the variable $U_{0}$.

In addition, $E_{V}$ is squarefree and so 
$$
\Delta:=\Res_{U_{0}}\(E_{V},\frac{\partial E_{V}}{\partial U_{0}}\) \in \Z[U_{1},\ldots, U_{m}]
$$ 
is a nonzero polynomial. If $p$ does not divide one of the
nonzero coefficients of this polynomial, then $E_{V}\pmod{p}$ is also
squarefree. Thus we choose $\beta$ as the absolute value of $\beta_0$ 
times any nonzero coefficient of $\Delta$.

The logarithm of $|\beta_{0}|$ is bounded by the height of $E_{V}$. 
Hence, by Lemma~\ref{prop:3}, 
\begin{equation}
  \label{eq:beta0}
  \log|\beta_{0}|\le  m d^{m-1}h+(m+1)d^{m}\log(m+1).  
\end{equation}
By~\cite[Theorem~1.1]{Som04}, the Sylvester resultant of two generic
univariate polynomials of respective degrees $T$ and $T-1$, has $2T+1$ coefficients, degree $2T-1\le 2d^{m}-1$ and height bounded by $2T\log T\le
2md^{m}\log d$.  By Lemma~\ref{prop:3}, 
\begin{align*}
  \deg E_V , \deg \frac{\partial E_{V}}{\partial U_{0}} &\le  d^{m}, 
\\
  \h(E_V) , \h\(\frac{\partial E_{V}}{\partial U_{0}}\) & \le m
  d^{m-1}h+(m+1)d^{m}\log(m+1)+ m\log d.   
\end{align*}
Hence,  specializing this generic resultant in the coefficients of
$E_V$ and $\partial E_{V}/\partial U_{0}$, 
seen as polynomial in the variable $U_0$,
and using Lemma~\ref{hh} with $F = \Delta$, $\ell = 2T+1 \le 2d^{m}+1$ 
and $k = m$, we get
\begin{equation*}
\begin{split}
  \h(\Delta) & \le  
   2md^{m}\log d \\
   & \quad + (2d^{m}-1)\bigl( m d^{m-1}h+(m+1)d^{m}\log(m+1)+   m\log d\\ 
 & \quad  +
   \log (2d^{m}+2) + d^{m}\log (m+1)\bigr)\\
 & \le 
   2md^{m}\log d \\
   & \quad + (2d^{m}-1)\bigl( m d^{m-1}h+(m+2)d^{m}\log(m+1) +   m\log d\\ 
 & \quad  +
   \log (2d^{m}+2)\bigr) . 
     \end{split}
\end{equation*}
Taking into account that
$\log(2d^m+2) \le (m+1)d$, 
we get
  \begin{equation}
 \begin{split}
  \label{eq:h-Delta}
\h(\Delta)\le (2d^m-1) & \( md^{m-1}h + (m+2)d^{m}\log(m+1)\) \\ 
&\qquad \qquad \qquad \qquad \quad +
2d^{m} (2m+1). 
     \end{split}
\end{equation}
Adding~\eqref{eq:beta0} and~\eqref{eq:h-Delta},  we easily 
derive the stated result. 
\end{proof}

\subsection{Proof of Theorem~\ref{thm:1}}

We assume that $d \ge 2$ as otherwise the result is trivial by the
Hadamard bound on the determinant 
of the corresponding system of
linear equations.

Set $\fA=\alpha\beta$ with $\alpha$ as in Lemma~\ref{lem:8} and
$\beta$ as in Lemma~\ref{lem:9}. 
If $p\nmid \fA$, then $p\nmid \beta$ and, by Lemma~\ref{lem:9}, the reduction of
the eliminant $E_V$ modulo $p$ is a squarefree polynomial of degree
$T$ in the variable $U_{0}$. 

Recall that $\ov \cV_{p}$ denotes the fibre of the
scheme $\ov \cV$ over the prime $p$. This is a subscheme of
$\P^{m}_{\F_{p}} $.  From the definition of the eliminant ideal, we
can see that $\Elim(I(\ov \cV_{p}))$ divides $E_V \pmod{p}$.  Since
this latter polynomial is squarefree, it follows that $ \Elim(I(\ov
\cV_{p})) $ is squarefree too.

By Lemma~\ref{prop:1}\eqref{item:3}, this implies that the
subcheme $\ov \cV_{p}$ is reduced and, by 
Lemma~\ref{lem:3}\eqref{item:5}, it is of degree $T$. Applying
Lemma~\ref{prop:1}\eqref{item:3} again, we deduce that 
$  \Elim(I(\ov \cV_{p}))$ has degree $T$ and so 
$$
  \Elim(I(\ov \cV_{p})) \equiv \lambda E_{V} \pmod{p}
$$
with $\lambda\in \F_{p}^{\times}$.  By Lemma~\ref{lem:9}, the
polynomial $E_{V} \pmod{p}$ has degree $T$ in the variable
$U_{0}$. This implies that the subscheme $\ov \cV_{p}$ is contained in
the open subset $\A_{\F_{p}}^{m}$. Hence, $\ov \cV_{p}$ is a
subvariety of degree $T$ which is contained in $V_{p}$.

If $p$
is a prime not dividing $\fA$, then $p\nmid \alpha $ and so $\alpha$ is
invertible modulo $p$.  Write $L_{p}^{\aff}\in \F_{p}[\bfU,\bfX]$ for
the reduction modulo $p$ of $L^{\aff}$. Then
$$
 E_{V}^{N} \pmod{p} \in (F_{1,p},\ldots, F_{m,p}, L_{p}^{\aff}) \subseteq \F_{p}[\bfU,\bfX]  
$$
with $F_{1,p}, \ldots, F_{s,p}$ as in~\eqref{eq:Fip}.
Write
\begin{equation}
  \label{eq:13}
  E_{V}^{N} \pmod{p} = A L_{p}^{\aff} + \sum_{j=1}^{m}B_{j}F_{j,p}   
\end{equation}
with $A, B_{j}\in \F_{p}[\bfU,\bfX]$. Let $\bfxi$ be a zero of
$F_{j,p}$, $j=1,\ldots, s$, in $\ov \F_{p}^{m}$. Evaluating the
equality~\eqref{eq:13} at this point, we obtain
$$
  E_{V}^{N}(\bfU) \pmod{p} = A(\bfU,\bfxi) L^{\aff}(\bfU, \bfxi). 
$$
It follows that $L_{p}^{\aff}(\bfU, \bfxi)$ divides $E_{V}(\bfU)
\pmod{p} $ for every such point. Since for every pair of distinct
points $\bfxi_1$ and $\bfxi_2$ in $\ov \F_{p}^{m}$, the linear forms
$L_{p}^{\aff}(\bfU, \bfxi_1)$ and $L_{p}^{\aff}(\bfU, \bfxi_2)$ are
coprime, we conclude that the zero set of $F_1,\ldots,F_s$ in $\ov
\F_{p}^{m}$ has at most $ \deg E_{V}=T$ points.  Hence $V_{p}$ is of
dimension 0 and degree $T$, as stated.

The bound for $\fA$ follows from the bounds for $\alpha$ in
Lemma~\ref{lem:8} and for $\beta$ in Lemma~\ref{lem:9}. Indeed, with
the notation therein, the quantity
$A_1(m)d^{m+\min\{s,2m+1\}}+B_1(m)d^{2m-1}$ can be bounded by
  $$
   (10m  +
4)d^{m+\min\{s,2m+1\}}  + 2 m d^{2m-1} \le  (11m  + 4) d^{3m +1}  =
C_1(m) d^{3m+1}, 
 $$
 and $A_2(m,s)d^{m+\min\{s,2m+2\}}+B_2(m)d^{2m-1}$ can
 be bounded by
   \begin{equation*}
 \begin{split}   
&  \bigl((54m+98)\log(2m+5) \\
& \qquad  +24(m+1)\log \max\{1,s-2m\} \bigr)d^{m+\min\{s,2m+2\}}\\
& \qquad \qquad +\((2m+4)\log(m+1) +  (4m+2)\)d^{2m}\\
& \qquad\le    
\bigl((54m+98)\log(2m+5)  +24(m+1)\log \max\{1,s-2m\}\\ 
& \qquad \quad + \frac{1}{8}
\((2m+4)\log(m+1) +  (4m+2)\) \bigr) d^{3m+2} \\
& \qquad \le(55 m +99) \log ((2m+5)s)  d^{3m+2}   =  d^{3m+2} C_2(m,s)
 \end{split}
 \end{equation*}
with $C_{1}(m)= 11m+4$ and $C_{2}(m,s)=  (55 m +99) \log ((2m+5)s)$. 
Hence 
$$
  \log \fA \le  C_1(m) d^{3m +1} h +C_2(m,s)d^{3m +2} ,
$$
concluding the proof. 

\section{Bounds for the Degrees and the Heights of Products and
  Compositions of Rational Functions} \label{sec:polyn-syst-height}


In this section, 
we collect several useful bounds on the height of various polynomials
and rational functions. These lemmas  are used in the proof of our
results in~\S\ref{sec:Period}, \S\ref{sec:OrbVar GenAvoid} 
and~\S\ref{sec:OrbVar UML}, and some of them can be of independent
interest.


The following bound on the height of a product of polynomials, which  follows 
from~\cite[Lemma~1.2]{KPS}, underlines our estimates. 

\begin{lemma}
\label{lem:HeightFact}
Let $F_1, \ldots, F_s\in \Z[\bfX]$.  Then 
\begin{align*}
- 2  \sum_{i=1}^s\deg F_{i} \log(m+1) \le \h\(\prod_{i=1}^sF_{i}\)
& -\sum_{i=1}^s \h(F_i)\\
& \le   \sum_{i=1}^s\deg F_{i}  \log(m+1). 
\end{align*}
\end{lemma}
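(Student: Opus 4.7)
The plan is to deduce both inequalities from \cite[Lemma~1.2]{KPS}, which is essentially the stated result; the two directions admit distinct proofs that account for the asymmetric constants $1$ vs $2$ in the statement.

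For the upper bound, an elementary coefficient computation suffices. Setting $D=\sum_{i}\deg F_{i}$, I would combine submultiplicativity of the $\ell^{1}$-norm with the inequality $\|F\|_{1}\le(m+1)^{\deg F}\|F\|_{\infty}$, obtained from the bound $\binom{\deg F+m}{m}\le(m+1)^{\deg F}$ on the number of monomials. This gives
$$
\Bigl\|\prod_{i}F_{i}\Bigr\|_{\infty} \le \Bigl\|\prod_{i}F_{i}\Bigr\|_{1} \le \prod_{i}\|F_{i}\|_{1} \le (m+1)^{D}\prod_{i}\|F_{i}\|_{\infty},
$$
and taking logarithms yields the stated upper inequality.

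For the lower bound, which is the deeper direction, I would go through the multivariate Mahler measure
$$
m(F) \;=\; \int_{[0,1]^{m}} \log\bigl|F\bigl(e^{2\pi i t_{1}},\ldots,e^{2\pi i t_{m}}\bigr)\bigr|\,dt_{1}\cdots dt_{m},
$$
which enjoys multiplicativity, $m\bigl(\prod_{i}F_{i}\bigr)=\sum_{i}m(F_{i})$, and a two-sided comparison with the naive height, namely
$$
\bigl|\,\h(F)-m(F)\,\bigr|\;\le\;(\deg F)\log(m+1)\qquad\text{for every }F\in\Z[\bfX]
$$
(the essential content of \cite[Lemma~1.2]{KPS}). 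Once both ingredients are available, the lower bound follows by a one-line combination:
$$
\sum_{i}\h(F_{i}) \;\le\; \sum_{i}m(F_{i})+D\log(m+1) \;=\; m\Bigl(\prod_{i}F_{i}\Bigr)+D\log(m+1) \;\le\; \h\Bigl(\prod_{i}F_{i}\Bigr)+2D\log(m+1),
$$
which rearranges to the claim.

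The principal subtlety is the sharp constant $\log(m+1)$ in the comparison between $m(F)$ and $\h(F)$. The direction $m(F)\le\h(F)+(\deg F)\log(m+1)$ is simply the single-factor version of the coefficient count used in the upper bound above. The direction $\h(F)\le m(F)+(\deg F)\log(m+1)$ is the multivariate Gelfond--Mahler--Landau inequality; a na\"ive iteration of the univariate estimate $\h(F)\le m(F)+d\log 2$ in each variable would yield the looser constant $m\log 2$ per unit of degree, so obtaining the sharp form $\log(m+1)$ requires the more refined multivariate argument of \cite{KPS}, which is what one actually invokes.
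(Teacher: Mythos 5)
Your proposal is correct and is essentially the paper's own treatment: the paper offers no independent argument for this lemma, deriving it directly from \cite[Lemma~1.2]{KPS}, and your derivation — the elementary $\ell^{1}$-submultiplicativity bound for the upper inequality, and multiplicativity of the Mahler measure combined with the two-sided comparison $|\h(F)-m(F)|\le \deg F\,\log(m+1)$ for the lower one — is exactly the standard argument underlying that citation, reproducing the asymmetric constants $1$ and $2$. Deferring the sharp inequality $\h(F)\le m(F)+\deg F\,\log(m+1)$ to \cite{KPS} is legitimate here, since that refined multivariate comparison is precisely the content being quoted by the paper as well.
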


We also frequently use the trivial bound on the height of a sum of polynomials
\begin{equation}
\label{eq:HeightSum}
\h\(\sum_{i=1}^s F_i\) \le \max_{i=1, \ldots, s} \h(F_i) +\log s.
\end{equation}

We already used the bound for the composition of polynomials
(see Lemma~\ref{hh}). We now specialize it to polynomials with equal number of
variables.

\begin{lemma}\label{lem:HeighCompos-Poly}
  Let $F, G_1,\ldots, G_m\in\Z[\bfX]$. Set $d=\max_{i}\deg
  G_i$ and $ h=\max_{i}\h(G_i)$. Then
\begin{align*}
& \deg F(G_1,\ldots, G_m)\le d \deg F, \\
&  \h\(F(G_1,\ldots, G_m)\) \leq \h(F) +h \deg F+ 
 (d+1) \deg F \log(m+1).
\end{align*}
\end{lemma}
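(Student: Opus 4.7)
The plan is to derive both bounds as direct specializations of the general composition statement already recorded as Lemma~\ref{hh}. Since the polynomials $G_{1},\ldots,G_{m}$ now live in the same ring $\Z[\bfX]$ of $m$ variables as the substitutions are to produce, the parameter $\ell$ in the hypothesis of Lemma~\ref{hh} is simply~$m$. There is no combinatorial or arithmetic subtlety beyond a bit of bookkeeping; the only thing that needs to be checked is that the grouping of logarithmic terms matches the form stated here.

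For the degree bound, I would argue directly: write $F=\sum_{\bfa}c_{\bfa}\bfY^{\bfa}$ with $\sum_{j}a_{j}\le \deg F$. After substitution, each monomial $\bfY^{\bfa}$ becomes $\prod_{j}G_{j}^{a_{j}}$, whose degree is bounded by $\sum_{j}a_{j}\deg G_{j}\le d\deg F$. Taking the maximum over the monomials gives $\deg F(G_{1},\ldots,G_{m})\le d\deg F$.

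For the height bound, I would apply Lemma~\ref{hh} with $\ell=m$, producing
$$
\h\bigl(F(G_{1},\ldots,G_{m})\bigr)\le \h(F)+\deg F\bigl(h+\log(m+1)+d\log(m+1)\bigr),
$$
and then regroup the $\log(m+1)$ contributions as $(d+1)\deg F\log(m+1)$. This yields precisely the inequality in the statement.

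The only point that requires a sentence of care is the unified $\log(m+1)$: in Lemma~\ref{hh} one of the logarithms is $\log(\ell+1)$ (counting summands in the multinomial expansion) and the other is $\log(m+1)$ (counting monomials in $\bfX$); after specializing $\ell=m$ both collapse to $\log(m+1)$, which is exactly what produces the factor $d+1$ rather than $d$. No further obstacle is expected.
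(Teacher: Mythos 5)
Your proposal is correct and coincides with the paper's own route: the paper states this lemma precisely as the specialization of Lemma~\ref{hh} to $\ell=m$, which after regrouping $\log(\ell+1)+d\log(m+1)$ gives the factor $(d+1)\deg F\log(m+1)$, exactly as you do. The monomial-by-monomial argument for the degree bound is the standard (and intended) justification, so nothing further is needed.
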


The following is and extension of  Lemma~\ref{lem:HeighCompos-Poly} 
to the composition of rational
functions.

\begin{lemma}\label{lem:HeighCompos}
  Let $R, S_1,\ldots, S_m\in\Q(\bfX)$ such that the
  composition $R(S_{1},\ldots, S_{m})$ is well defined. Set $
  d=\max_{i}\deg S_i$ and $ h=\max_{i}\h(S_i)$. Then
\begin{align*}
\deg R(S_{1},\ldots,
S_{m})& \le dm \deg R, \\
  \h\(R(S_{1},\ldots,
S_{m})\)&\leq \h(R) +h \deg R+ 
 (3dm+1) \deg R \log(m+1).
\end{align*}
\end{lemma}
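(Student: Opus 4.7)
The plan is to reduce to the polynomial composition result of Lemma~\ref{lem:HeighCompos-Poly} by introducing a common denominator and then homogenizing. I will write $R = F/G$ and $S_i = P_i/Q_i$ in coprime form over $\Z[\bfX]$, with $\deg F, \deg G \le D := \deg R$, $\h(F), \h(G) \le H := \h(R)$, $\deg P_i, \deg Q_i \le d$, and $\h(P_i), \h(Q_i) \le h$. Setting $Q = \prod_j Q_j$ and $\wt P_i = P_i \prod_{j \ne i} Q_j$, we have $S_i = \wt P_i / Q$, and Lemma~\ref{lem:HeightFact} gives $\deg Q, \deg \wt P_i \le md$ together with $\h(Q), \h(\wt P_i) \le m h + m d \log(m+1)$. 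Letting $F^{\hh}, G^{\hh} \in \Z[Y, X_1, \ldots, X_m]$ denote the homogenizations of $F$ and $G$ to degree $D$, one then obtains
$$R(S_1, \ldots, S_m) = \frac{F^{\hh}(Q, \wt P_1, \ldots, \wt P_m)}{G^{\hh}(Q, \wt P_1, \ldots, \wt P_m)},$$
and the degree bound $\deg R(S_1,\ldots, S_m) \le dm \deg R$ is immediate from $\deg Q, \deg \wt P_i \le md$ and $\deg F^{\hh} = \deg G^{\hh} = D$.

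For the height, I apply Lemma~\ref{lem:HeighCompos-Poly} in its natural $(m+1)$-variable analogue to each of the polynomial compositions in the numerator and denominator above, feeding in the degree and height bounds for $Q$ and the $\wt P_i$. This yields a bound of the form $H + (mh + md \log(m+1))\, D + (md+1)\, D \log(m+2)$ for each of $\h(F^{\hh}(Q, \wt P_1, \ldots, \wt P_m))$ and $\h(G^{\hh}(Q, \wt P_1, \ldots, \wt P_m))$. Since $\h(R(S_1,\ldots,S_m))$ is the maximum of the heights of the numerator and denominator in the coprime representation, it is controlled by the maximum of the heights of the non-coprime representation above, up to a Mahler-type correction accounting for passage to coprime form; that correction is itself of order $O(dm \deg R \log(m+1))$ and can be extracted from the lower bound in Lemma~\ref{lem:HeightFact} applied to the product of the common factor with the reduced numerator or denominator.

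The main obstacle is to match the precise constants as stated, in particular the coefficient $h \deg R$ in the $h$-term, since the common-denominator construction naively produces a term of order $mh \deg R$ via the bound $\h(Q), \h(\wt P_i) \le mh + md\log(m+1)$. To absorb the extra factor of $m$ into the $(3dm+1)\deg R \log(m+1)$ term—rather than seeing it attached to $h$—one must either exploit cancellations that occur when reducing the quotient $F^{\hh}(Q,\wt P_1,\ldots,\wt P_m)/G^{\hh}(Q,\wt P_1,\ldots,\wt P_m)$ to lowest terms, or replace the single denominator $Q = \prod_j Q_j$ by a multi-graded substitution that avoids multiplying all of the $Q_j$ together. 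This bookkeeping step is the most delicate part of the argument; once it is in place, a routine rearrangement collecting the $H$, $h\deg R$, and $\log(m+1)$ contributions yields the stated bound.
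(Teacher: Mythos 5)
Your construction is essentially the paper's own proof: the authors also clear denominators via $B=\prod_j G_j$ and $A_i=F_i\prod_{j\ne i}G_j$, substitute into the (degree-$D$) homogenisations of the numerator and denominator of $R$, bound the heights of the resulting products with Lemma~\ref{lem:HeightFact}, and then pass to the coprime representation using the lower bound of Lemma~\ref{lem:HeightFact}, exactly as in your ``Mahler-type correction''. The degree bound and all intermediate estimates you give match theirs.

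The one place where you diverge is the final ``delicate bookkeeping step'', and there you should stop worrying: it cannot be carried out, and the paper does not carry it out either. The paper's proof ends with the estimate $\h(R)+m h\deg R+(3dm+1)\deg R\log(m+1)$ (their term $mDh$), i.e.\ with the same extra factor $m$ on the $h$-term that your computation produces, and then simply asserts the statement; the coefficient $h\deg R$ in the lemma as printed is a slip. Indeed the stated inequality is false in general: take $m=2$, $R=X_1/X_2$ (so $\deg R=1$, $\h(R)=0$) and $S_1=cX_1$, $S_2=X_2/c$ with $c$ a large integer, so $d=1$, $h=\log c$; then $R(S_1,S_2)=c^2X_1/X_2$ has height $2\log c$, which exceeds $\h(R)+h\deg R+(3dm+1)\deg R\log(m+1)=\log c+7\log 3$ once $c$ is large. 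So no cancellation or multi-graded substitution can recover the printed constant, and the speculative last paragraph of your proposal should be deleted: the correct conclusion of the argument (yours and the paper's alike) is $\h(R(S_1,\ldots,S_m))\le \h(R)+m h\deg R+(3dm+1)\deg R\log(m+1)$, which only changes the statement by the factor $m$ on the $h$-term and leaves the general shape of all downstream estimates intact.
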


\begin{proof}
Let $R=P/Q$ with coprime $P,Q\in \Z[\bfX]$ and write 
$$
  P=\sum_{\bfa}\alpha_{\bfa}\bfX^{\bfa} \mand Q=\sum_{\bfa}\beta_{\bfa}\bfX^{\bfa}
$$
with $\alpha_{\bfa},\beta_{\bfb}\in \Z$. We suppose for simplicity
that
$$
D=\deg P\ge \deg Q,  
$$
since the other case can be reduced to
this one by considering the inverse $R^{-1}$. 

Let also $S_{i}=F_{i}/G_{i}$
with  coprime $F_{i},G_{i}\in \Z[\bfX]$. 
Consider the polynomials
$$
B=\prod_{j}G_{j} \mand A_{i}=F_{i}\prod_{j\ne i}G_{j}
$$
and set $\bfA=(A_{1},\ldots, A_{m})$. 
Then  $R(S_{1},\ldots, S_{m})=
U/V$ with
$$
  U= \sum_{\bfa}\alpha_{\bfa} B^{D-|\bfa|}\bfA^{\bfa} \mand V= \sum_{\bfa}\beta_{\bfa}B^{D-|\bfa|}\bfA^{\bfa}.
$$

By Lemma~\ref{lem:HeightFact}, for each $\bfa$ with $|\bfa|\le D$, 
$$
\deg(B^{D-|\bfa|}\bfA^{\bfa})  \le mDd, \quad   \h( B^{D-|\bfa|}\bfA^{\bfa})   \le mDh + mDd\log(m+1).    
$$
Hence $\deg U, \deg V \le mDd$, which gives the degree bound for
the rational function 
$R(S_{1},\ldots, S_{m})$. For the height bound, we have that 
\begin{equation} \label{eq:12}
  \begin{split}
  \h(U) & \le h(P)+ mDh + mDd\log(m+1) + \log\binom{D+m}{m}\\
 & \le h(R)+ mDh + mDd\log(m+1) + D\log(m+1),     
\end{split}
\end{equation}
and similarly for $V$. 

Let $\widetilde U, \widetilde V\in \Z[\bfX]$ coprime with $\widetilde U/\widetilde V =U/V$. Then
$\widetilde U\mid U$ and $\widetilde V\mid V$. Then, by Lemma~\ref{lem:HeightFact}, 
\begin{equation}\label{eq:14}
  \h(\widetilde U) \le \h(U) + 2\log(m+1) \deg U.
\end{equation}
and similarly for $\widetilde V$. From~\eqref{eq:12} and~\eqref{eq:14}, it
follows that
\begin{align*}
  \h(\widetilde U)  &\le  h(R)+ mDh + D(md+1)\log(m+1) + 
  2mDd\log(m+1)\\
&\le  h(R)+ mDh + D(3md+1)\log(m+1),
\end{align*}
and similarly for $\widetilde V$, 
which gives the bound for the height of the composition. 
\end{proof}

We now use Lemma~\ref{lem:HeighCompos-Poly} to bound 
the degree and height of iterations of polynomial systems.

\begin{lemma}
\label{lem:HeighIter-Poly}
Let $F_1,\ldots,F_m \in \Z[\bfX]$ be polynomials 
of degree at most $d\geq2$ and  height at most $h$. Then, for any positive integer $k$,
the polynomials  $F_1^{(k)}, \ldots ,F_m^{(k)}$ defined by~\eqref{eq:RatIter},
are of degree at most 
$d^k$
and of  height at most
$$
h \frac{d^k-1}{d-1}+d(d+1)\frac{d^{k-1}-1}{d-1}\log(m+1). 
$$
\end{lemma}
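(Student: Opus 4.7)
The plan is to proceed by induction on $k$, exploiting the identity
$$ F_i^{(k)} = F_i^{(k-1)}(F_1,\ldots,F_m), $$
which follows by a short induction from the defining relation~\eqref{eq:RatIter}: although $F_i^{(k)}$ is defined as $F_i(F_1^{(k-1)},\ldots,F_m^{(k-1)})$, an easy verification shows that the $k$-fold composition may equally be decomposed with the \emph{outer} iteration being $F_i^{(k-1)}$ applied to the original $F_1,\ldots,F_m$. This reordering is essential, since it places the degree blow-up on the outer polynomial rather than on the inner ones, which is precisely what the composition estimate in Lemma~\ref{lem:HeighCompos-Poly} rewards.

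The base case $k=1$ is immediate. For the inductive step, suppose that the stated bounds hold for $F_1^{(k-1)},\ldots,F_m^{(k-1)}$, and denote by $h_{k-1}$ the corresponding height bound. Applying Lemma~\ref{lem:HeighCompos-Poly} to the composition $F_i^{(k-1)}(F_1,\ldots,F_m)$, with outer polynomial of degree at most $d^{k-1}$ and height at most $h_{k-1}$ and inner polynomials of degree at most $d$ and height at most $h$, the degree bound $d\cdot d^{k-1}=d^k$ is immediate, and for the height we obtain the recurrence
$$ h_k \le h_{k-1} + d^{k-1} h + (d+1)\,d^{k-1}\log(m+1). $$

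Iterating this telescoping recurrence from $h_1=h$ and summing the resulting geometric sums yields
$$ h_k \le h\sum_{j=0}^{k-1}d^{j} + (d+1)\log(m+1)\sum_{j=1}^{k-1}d^{j} = h\,\frac{d^k-1}{d-1} + d(d+1)\,\frac{d^{k-1}-1}{d-1}\log(m+1), $$
which matches the claimed bound. No real obstacle is expected: the only subtlety is the choice of composition direction, since invoking Lemma~\ref{lem:HeighCompos-Poly} with the symmetric identity $F_i(F_1^{(k-1)},\ldots,F_m^{(k-1)})$ would use $d^{k-1}$ as the inner degree and produce a weaker recurrence $h_k \le h + d\,h_{k-1} + d(d^{k-1}+1)\log(m+1)$, whose solution carries an extra factor essentially linear in $k$.
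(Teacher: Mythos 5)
Your proof is correct and follows essentially the same route as the paper: the paper likewise rewrites $F_i^{(k)}=F_i^{(k-1)}(F_1,\ldots,F_m)$, applies Lemma~\ref{lem:HeighCompos-Poly} with the original polynomials as the inner arguments, and closes the induction with the same geometric-sum bookkeeping. Your explicit remark on why the composition must be decomposed in this order (outer $=F_i^{(k-1)}$) is a point the paper leaves implicit, but the argument is identical.
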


\begin{proof} The bound on the degree is trivial, and the inequality for the   height also follows straightforward by induction on the number of iterates $k$. Indeed, for $k=1$ we have equality by definition. Suppose the statement true for the first $k-1$ iterates. For every $i=1,\ldots,m$, we apply Lemma~\ref{lem:HeighCompos-Poly} to the polynomial 
$$
F_i^{(k)}=F_i^{(k-1)}\(F_1,\ldots,F_m\)
$$
and we get that the height of this polynomial is bounded by
\begin{equation*}
\begin{split}
\h(F_i^{(k-1)})+ (h + (d+1)&\log(m+1)) \deg F_i^{(k-1)} \\
&\le h \frac{d^{k-1}-1}{d-1}+d(d+1)\frac{d^{k-2}-1}{d-1}\log(m+1)\\
& \qquad \qquad \qquad +  (h + (d+1)\log(m+1)) d^{k-1}
\\
&\le h \frac{d^k-1}{d-1}+d(d+1)\frac{d^{k-1}-1}{d-1}\log(m+1), 
\end{split}
\end{equation*}
which concludes the proof. 
\end{proof}

For rational functions we apply Lemma~\ref{lem:HeighCompos} to derive 
a similar result.

\begin{lemma}
\label{lem:HeighIter}
Let $R_1,\ldots,R_m \in \Q(\bfX)$  be rational functions 
of degree at most $d$ and  height at most $h$. If either $d\geq2$ or $m\geq2$ then, for any positive integer $k$,
the rational functions  $R_1^{(k)}, \ldots ,R_m^{(k)}$ defined by~\eqref{eq:RatIter},
are of degree at most $d^k m^{k-1},$
and of  height at most
$$\(1+d\frac{d^{k-1} m^{k-1}-1}{dm-1}\)h+ 
d(3dm+1)\frac{d^{k-1} m^{k-1}-1}{dm-1}\log(m+1).$$
\end{lemma}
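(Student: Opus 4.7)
The plan is to mirror the proof of Lemma~\ref{lem:HeighIter-Poly}, proceeding by induction on $k$ but substituting the rational-function composition bound of Lemma~\ref{lem:HeighCompos} for the polynomial version of Lemma~\ref{lem:HeighCompos-Poly}. The hypothesis that $d \ge 2$ or $m \ge 2$ enters exactly to guarantee $dm - 1 \ne 0$, so that the quantity $(d^{k-1}m^{k-1} - 1)/(dm-1)$ makes sense; it is really the closed form of the geometric sum $\sum_{j=0}^{k-2} (dm)^j$, which is the natural quantity to appear when iterating a bound of the form ``new height is old height plus a constant times old degree''.

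Base case $k=1$: both sides of the degree bound reduce to $d$, and the claimed height bound reduces to $h$ since the factor $(d^{k-1}m^{k-1} - 1)/(dm-1)$ vanishes. For the inductive step, assume the bounds hold at level $k-1$, write
$$
R_i^{(k)} = R_i^{(k-1)}\bigl(R_1, \ldots, R_m\bigr),
$$
and apply Lemma~\ref{lem:HeighCompos} with outer function $R_i^{(k-1)}$ and inner functions $R_1,\ldots,R_m$. The degree bound follows immediately from
$$
\deg R_i^{(k)} \le dm \cdot \deg R_i^{(k-1)} \le dm \cdot d^{k-1}m^{k-2} = d^k m^{k-1}.
$$
For the height, Lemma~\ref{lem:HeighCompos} yields
$$
\h(R_i^{(k)}) \le \h(R_i^{(k-1)}) + h\,\deg R_i^{(k-1)} + (3dm+1)\deg R_i^{(k-1)}\log(m+1),
$$
into which one substitutes the inductive bounds on $\h(R_i^{(k-1)})$ and $\deg R_i^{(k-1)} \le d^{k-1}m^{k-2}$.

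What remains is the algebraic verification that the resulting expression agrees with the claimed formula at level $k$. Collecting the coefficients of $h$ and of $\log(m+1)$ separately, both identities reduce to the single equality
$$
d\,\frac{d^{k-2}m^{k-2} - 1}{dm - 1} + d^{k-1}m^{k-2} \;=\; d\,\frac{d^{k-1}m^{k-1} - 1}{dm-1},
$$
which after clearing the denominator becomes $d^{k-1}m^{k-2}(dm - 1) + d(d^{k-2}m^{k-2} - 1) = d(d^{k-1}m^{k-1} - 1)$, a routine telescoping of the geometric series. The only real bookkeeping obstacle is checking that the coefficient $(3dm+1)$ in Lemma~\ref{lem:HeighCompos}, which is expressed in terms of the degrees of the inner functions, is correctly propagated through each composition; since this coefficient depends only on $d$ and $m$ and not on the level of iteration, it can be factored out of the inductive expression cleanly, and the claimed bound follows.
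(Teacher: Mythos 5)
Your proof is correct and follows essentially the same route as the paper: induction on $k$ using the decomposition $R_i^{(k)}=R_i^{(k-1)}(R_1,\ldots,R_m)$ together with Lemma~\ref{lem:HeighCompos}, closing the induction via the identity $d\,\frac{d^{k-2}m^{k-2}-1}{dm-1}+d^{k-1}m^{k-2}=d\,\frac{d^{k-1}m^{k-1}-1}{dm-1}$, which is exactly the computation in the paper. Your remarks on the role of the hypothesis $d\ge 2$ or $m\ge 2$ (so that $dm-1\neq 0$) and on the coefficient $3dm+1$ being constant across iterations are accurate.
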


\begin{proof} 
  The bound for the degree follows easily from Lemma~\ref{lem:HeighCompos}.  
  We prove the bound for the height by
  induction on $k$. For $k=1$ the bound is trivial. For $k\ge 2$, we
  assume that the bound holds for the first $k-1$ iterates.

  Applying  Lemma~\ref{lem:HeighCompos} with $R_i$ and
  $R_i^{(k-1)}$, $i=1,\ldots,m$, and the induction hypothesis, we
  obtain that $\h(R_i^{(k)}) $ is bounded by 
\begin{equation*}
\begin{split}
  \h(R_i^{(k-1)}&)+h \deg(R_i^{(k-1)}) +(3dm+1)\deg(R_i^{(k-1)})\log(m+1)\\
& \le   \(1+d\frac{d^{k-2} m^{k-2}-1}{dm-1}\) h\\
 & \qquad  + d(3dm+1)\frac{d^{k-2} m^{k-2}-1}{dm-1}\log(m+1)\\
&\qquad \qquad +h d^{k-1}m^{k-2}+(3dm+1)d^{k-1}m^{k-2}\log(m+1)
\\
&=   \(1+d\frac{d^{k-1} m^{k-1}-1}{dm-1}\) h \\
&\qquad+ 
d(3dm+1)\frac{d^{k-1} m^{k-1}-1}{dm-1}\log(m+1), 
\end{split}
\end{equation*}
where we have used the identity
$$
d\frac{d^{k-2} m^{k-2}-1}{dm-1} + d^{k-1}m^{k-2} = 
d\frac{d^{k-1} m^{k-1}-1}{dm-1}.
$$
\end{proof}

\section{Periodic Points}
\label{sec:Period}

 \subsection{Definitions and main results}
 \label{sec:def res per}

We start with the following standard definition of $k$-periodicity.

\begin{definition}
  \label{def:Per k} Let $K$ be a field and $\bfR\in K(\bfX)^{m}$  a
  system of rational functions as in~\eqref{eq:16}. Given $k\ge1$, 
we say that $\bfw \in \ov K^m$ is \emph{$k$-periodic} if the element 
$\bfw_k$ exists in the orbit~\eqref{eq:Orb R} and we have
$\bfw_k = \bfw_0$. 
\end{definition}

In this definition, we do not request that $k$ is the smallest integer
with this property. On the other hand, this notion of $k$-periodicity
is more restrictive than the condition
$\bfR^{(k)}(\bfw)=\bfw_0$, see the discussion after \eqref{eq:Orb R}. 

%

We first prove the following  result for systems of rational functions.

\begin{theorem}
  \label{thm:Cycles} Let $m,d\in\N$ with $d,m\ge 2$, and $\bfR=(R_1,\ldots,R_m)$ be a
  system of $m$ rational functions in $\Q(\bfX)$ of degree
  at most $d$ and of height at most $h$.  Assume that $\bfR$ has
  finitely many periodic points of order $k$ over $\C$.  Then there
  exists an integer $\fA_k \ge 1$ with
$$
    \log \fA_k \ll_{d,h,m} (dm)^{k(3m+5)} 
$$
such that, if $p$ is a prime number not dividing $\fA_k$, then the
reduction of $\bfR$ modulo $p$ has at most $(2m^{k}d^k)^{m+1}$
periodic points of order $k$.
\end{theorem}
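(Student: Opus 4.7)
My plan is to encode $k$-periodicity as an auxiliary polynomial system, apply Theorem~\ref{thm:1} to control its reduction modulo primes, and finish with a B\'ezout count.

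First, I write $R_i^{(k)}=F_{i,k}/G_{i,k}$ in coprime form with $F_{i,k},G_{i,k}\in\Z[\bfX]$. Lemma~\ref{lem:HeighIter} applies (since $d,m\ge 2$) and gives
$$\deg F_{i,k},\deg G_{i,k}\le D:=d^km^{k-1}\qquad\text{and}\qquad \h(F_{i,k}),\h(G_{i,k})\le H,$$
with $H=O_{d,h,m}((dm)^{k-1})$. I then introduce an auxiliary variable $Y$ and consider the Rabinowitch-type system in $m+1$ unknowns:
$$H_i(\bfX)=F_{i,k}(\bfX)-X_iG_{i,k}(\bfX)\ (1\le i\le m),\qquad P(\bfX,Y)=Y\prod_{j=1}^m G_{j,k}(\bfX)-1.$$
Its zero set in $\C^{m+1}$ maps bijectively (via $\bfw\mapsto(\bfw,(\prod_j G_{j,k}(\bfw))^{-1})$) onto the ``formal'' $k$-periodic set $\{\bfw\in\C^m:\prod_jG_{j,k}(\bfw)\ne 0,\ \bfR^{(k)}(\bfw)=\bfw\}$, which contains the strict $k$-periodic points of $\bfR$. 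Under the hypothesis of finitely many strict $k$-periodic points, this formal set is also finite: any positive-dimensional component would intersect the Zariski-open complement of the pole loci of $\bfR,\bfR^{(2)},\ldots,\bfR^{(k-1)}$ in a dense open subset and contribute infinitely many strict $k$-periodic points. Denote by $T$ this finite cardinality.

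I apply Theorem~\ref{thm:1} with $m$ replaced by $m+1$ and $s=m+1$. The maximum degree of the system is at most $mD+1\le 2mD$, and by Lemma~\ref{lem:HeightFact} together with~\eqref{eq:HeightSum}, its maximum height is at most $mH+O_m(D)$. The theorem provides an integer $\fA_k$ such that, for every prime $p\nmid\fA_k$, the reduced system still has exactly $T$ zeros in $\ov\F_p^{m+1}$; substituting $D\le(dm)^k$ and $H=O_{d,h,m}((dm)^{k-1})$ into the bound of Theorem~\ref{thm:1} gives
$$\log\fA_k\ll_m D^{3m+4}(H+D)+D^{3m+5}\ll_{d,h,m}(dm)^{k(3m+5)},$$
matching the target exponent.

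To conclude, every $k$-periodic point of $\bfR\bmod p$ injects into the zero set of the reduced Rabinowitch system via $\bfw\mapsto(\bfw,(\prod_j G_{j,k}(\bfw))^{-1})$, so their number is at most $T$. Applying B\'ezout to the $0$-dimensional Rabinowitch system over $\C$ gives
$$T\le(D+1)^m(mD+1)\le 2^{m+1}m\,D^{m+1}\le(2m^kd^k)^{m+1},$$
where the final inequality follows by expanding $D=d^km^{k-1}$. The principal obstacle is the exponent bookkeeping needed to obtain exactly $k(3m+5)$ in $\log\fA_k$; the passage from strict to formal $k$-periodicity is a secondary but non-trivial point, handled by the genericity argument sketched above.
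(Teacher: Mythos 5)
Your overall strategy is the paper's: encode $k$-periodicity by a Rabinowitsch-type system, apply Theorem~\ref{thm:1} in $m+1$ variables with $s=m+1$, and finish with B\'ezout; your degree/height bookkeeping and the final count $(2m^kd^k)^{m+1}$ check out. But there is a genuine gap at the point where you pass from strict to ``formal'' $k$-periodicity. Your auxiliary polynomial inverts only $\prod_j G_{j,k}$, the denominators of the $k$-th iterate, so your complex zero set is $\{\bfw:\prod_jG_{j,k}(\bfw)\ne0,\ \bfR^{(k)}(\bfw)=\bfw\}$. The hypothesis of the theorem concerns the strict notion of Definition~\ref{def:Per k} (the whole orbit $\bfw_0,\ldots,\bfw_k$ must exist), which, as the paper stresses after~\eqref{eq:Orb R} and after Definition~\ref{def:Per k}, is strictly stronger than $\bfR^{(k)}(\bfw)=\bfw$: the example $R(X)=1/X$, $k=2$, already shows the two sets differ. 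Your bridging argument — that a positive-dimensional component of the formal set would meet the complement of the pole loci of $\bfR,\ldots,\bfR^{(k-1)}$ in a dense open subset — assumes precisely what needs to be proved: such a component may be \emph{entirely contained} in a pole hypersurface $\{G_{i,j}=0\}$ with $j<k$ (nothing in your equations constrains the intermediate denominators), in which case it contributes no strict $k$-periodic points at all, and the finiteness hypothesis gives you nothing about it. Without finiteness of the formal set, Theorem~\ref{thm:1} simply does not apply to your system.

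The paper avoids this issue by construction rather than by argument: it puts \emph{all} intermediate denominators into the auxiliary equation, taking $1-X_0\prod_{i=1}^m\prod_{j=1}^k G_{i,j}$, so that the complex zero set is exactly the strict $k$-periodic locus, finite by hypothesis, and Theorem~\ref{thm:1} applies directly; the degree of this extra factor is still $O((dm)^k)$, so the exponent $k(3m+5)$ and the B\'ezout bound are unaffected. To repair your proof, replace $Y\prod_{j=1}^mG_{j,k}-1$ by the analogous product over all $j\le k$ (and adjust the height estimate for that product as in the paper, via Lemma~\ref{lem:HeightFact} and Lemma~\ref{lem:HeighIter}); alternatively you would have to prove that no component of the formal fixed locus lies inside the intermediate pole loci, which is false in general and in any case not something the hypothesis provides.
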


In the particular case of polynomials, the bound of
Theorem~\ref{thm:Cycles} simplifies as follows:

\begin{theorem}
\label{thm:Cycles-Poly} 
Let $d,m\ge 2$, and $\bfF = (F_1,\ldots,F_m)$ be a system of $m$
polynomials in $ \Z[\bfX]$ of degree at most $d$ and of
height at most $h$.  Assume that $\bfF$ has
finitely many periodic points of order $k$ over $\C$.  Then there
exists an integer $\fA_k \ge 1$ with
$$
\log \fA_k \ll_{d,h,m}  
 d^{k(3m+2)} 
$$
such that, if $p$ is a prime number not dividing $\fA_k$, then the
reduction of $\bfF$ modulo $p$ has at most $d^{km}$ periodic points of
order $k$.
\end{theorem}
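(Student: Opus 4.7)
The plan is to reduce Theorem~\ref{thm:Cycles-Poly} to a direct application of Theorem~\ref{thm:1}. Since $\bfF$ is a polynomial system there are no poles, so for any field $K$ extending $\Q$ and any $\bfw\in\ov K^{m}$, the point $\bfw$ is $k$-periodic in the sense of Definition~\ref{def:Per k} if and only if $\bfF^{(k)}(\bfw)=\bfw$. Consequently, the set of $k$-periodic points of $\bfF$ (respectively, of its reduction modulo a prime $p$) coincides with the zero set in $\ov\Q^{m}$ (respectively, in $\ov\F_{p}^{m}$) of the auxiliary polynomial system
$$
G_{i} := F_{i}^{(k)} - X_{i}, \qquad i=1,\ldots,m.
$$
By hypothesis, this system has a finite number of $\C$-zeros, which I denote by $T$.

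Next, I would record the bounds needed to invoke Theorem~\ref{thm:1}. Since $d\ge 2$, Lemma~\ref{lem:HeighIter-Poly} gives $\deg F_{i}^{(k)} \le d^{k}$ and $\h(F_{i}^{(k)}) \ll_{d,h,m} d^{k}$; subtracting $X_{i}$ does not alter the degree, and by~\eqref{eq:HeightSum} it increases the height by at most $\log 2$. Moreover, by B\'ezout's theorem applied to the system $G_{1},\ldots,G_{m}$ (equivalently, by the first inequality in Corollary~\ref{cor:3}), we obtain $T \le (d^{k})^{m} = d^{km}$, which is already the stated upper bound on the number of $k$-periodic points.

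The final step is to apply Theorem~\ref{thm:1} to the $m$-variable system $G_{1},\ldots,G_{m}$, with $s=m$, degree bound $D:=d^{k}$ and height bound $H \ll_{d,h,m} d^{k}$. This yields an integer $\fA_{k}\in\N$ satisfying
$$
\log \fA_{k} \le C_{1}(m)D^{3m+1}H + C_{2}(m,m)D^{3m+2} \ll_{d,h,m} d^{k(3m+2)},
$$
and such that, whenever $p\nmid \fA_{k}$, the number of $\ov\F_{p}$-zeros of the reduction of the $G_{i}$ equals $T$. Combining with the previous paragraph, this gives the asserted bound of $d^{km}$ on the number of $k$-periodic points of $\bfF \pmod{p}$.

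I do not foresee a genuine obstacle: the hard content is already packaged in Theorem~\ref{thm:1} and in the iterated-height estimate Lemma~\ref{lem:HeighIter-Poly}. The only care required is bookkeeping the constants depending on $d$, $h$ and $m$ so that the two contributions $C_{1}(m)D^{3m+1}H$ and $C_{2}(m,m)D^{3m+2}$ both fit under the asymptotic $d^{k(3m+2)}$ of the statement, and checking that the subtraction of $X_{i}$ in passing from $F_{i}^{(k)}$ to $G_{i}$ is harmless for the hypotheses of Theorem~\ref{thm:1}.
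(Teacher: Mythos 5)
Your proposal is correct and follows essentially the same route as the paper: the paper likewise applies Theorem~\ref{thm:1} to the system $F_i^{(k)}-X_i$, $i=1,\ldots,m$, using Lemma~\ref{lem:HeighIter-Poly} for the degree bound $d^{k}$ and height bound $\ll_{d,h,m} d^{k}$, which gives $\log\fA_k \ll_{d,h,m} d^{k(3m+1)}\cdot d^{k}+d^{k(3m+2)}\ll_{d,h,m} d^{k(3m+2)}$, together with the B\'ezout bound $T\le d^{km}$. The only detail worth noting is the same one you flagged: for a polynomial system the $k$-periodic points are exactly the zeros of this system, so no extra variable to exclude poles is needed, exactly as in the paper.
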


Using these theorems, it is possible to recover some of the
results of Silverman and of Akbary and Ghioca, that give lower bounds
on the period length which are roughly of order $\log \log p$ for all
primes~$p$~\cite[Corollary 12]{Silv2}, and of order $\log p$ for
almost all of them~\cite[Theorem~1.1(1)]{AkGh}, 
see~\cite[Corollaries~2.3 and~2.4]{CDOSS}.

Similarly, Theorems~\ref{thm:Cycles} and~\ref{thm:Cycles-Poly} can be
used with $k$ of order $\log p$ and $\log \log p$ for almost all and
all primes $p$, respectively, to get nontrivial upper bounds on the
number of periodic points of order $k$ (or even at most $k$).
%
%
%
%
%

\subsection{Proof of Theorem~\ref{thm:Cycles}}

The result is a direct consequence of Theorem~\ref{thm:1} and
Lemma~\ref{lem:HeighIter}. Indeed, let $\bfR^{(k)}$ be the iteration of the system of rational functions
$\bfR$ as in~\eqref{eq:RatIter}. As in~\eqref{eq:RedForm},  write
$$R_i^{(k)}= \frac{F_{i,k}}{G_{i,k}}
$$
with coprime
$F_{i,k}, G_{i,k}\in\Z[\bfX]$ and $G_{i,k}\ne 0$, and consider then the  system of equations
$$F_{i,k}-X_iG_{i,k} =0, \qquad i =1, \ldots, m.
$$

From the solutions to this system of equations, we have to extract
those that come from the poles of $R_i^{(j)},\,j\leq k$, that is, from
the zeroes of $\prod_{i=1}^m\prod_{j=1}^k G_{i,j}$.  For this we
introduce a new variable $X_0$, and thus, the set of $k$-periodic
points of $\bfR$ coincides with the zero set
 $$
 V_k=Z\(F_{1,k}-X_1G_{1,k},\ldots,F_{m,k}-X_mG_{m,k},1-X_0\prod_{i=1}^m\prod_{j=1}^k G_{i,j}\).
 $$
By Lemma~\ref{lem:HeighIter} and the fact that $dm\geq2:$
\begin{equation}
\label{eq:Deg G}
  \deg \(X_0\prod_{i=1}^m\prod_{j=1}^k G_{i,j}\)\leq 1+m\sum_{j=1}^kd^jm^{j-1}\le 2 (dm)^k.
\end{equation}
Further, by Lemmas~\ref{lem:HeightFact} and~\ref{lem:HeighIter}, 
 \begin{align*}
 \h&\(X_0\prod_{i=1}^m\prod_{j=1}^k G_{i,j}\) =  \h\(\prod_{i=1}^m\prod_{j=1}^k G_{i,j}\)\\
  & \qquad \le 
2(dm)^{k}\, \log(m+1)  + \sum_{i=1}^m \sum_{j=1}^k\h\(G_{i,j}\)  \\
 & \qquad \le  2(dm)^{k}\, \log(m+1)  + m\biggl(\sum_{j=1}^k\(1+d\frac{d^{j-1} m^{j-1}-1}{dm-1}\) h \\
 & \qquad\qquad \qquad  \qquad \qquad+ 
d(3dm+1)\frac{d^{j-1} m^{j-1}-1}{dm-1}\log(m+1)\biggr)\\ 
& \qquad\le 2(dm)^{k}\, \log(m+1)  + m\biggl(4d(dm)^{k-2}h\\
 & \qquad\qquad  \qquad \qquad  \qquad+ 2d(3dm+1)(dm)^{k-1}\log(m+1)\biggr).
\end{align*}
Hence 
 $$
\h\(X_0\prod_{i=1}^m\prod_{j=1}^k G_{i,j}\) \ll_{d,h,m}(dm)^k.
$$

Also, for every $i=1,\ldots,m$, we easily see that Lemma~\ref{lem:HeighIter}
and the bound~\eqref{eq:HeightSum} yield
$$
\deg \(F_{i,k}-X_iG_{i,k}\) \le d^k m^{k-1}+1, 
$$
and 
 \begin{align*}
 \h\(F_{i,k}-X_iG_{i,k}\)&\le \h\( R_i^{(k)}\) + \log 2 \\
&   \le \(1+d\frac{d^{k-1} m^{k-1}-1}{dm-1}\) h \\
 &  \qquad + 
d(3dm+1)\frac{d^{k-1} m^{k-1}-1}{dm-1}\log(m+1) + \log 2.
\end{align*}
Hence
$$
\h\(F_{i,k}-X_iG_{i,k}\)  \ll_{d,h,m}d^{k}m^{k-1}. 
$$
 We apply now Theorem~\ref{thm:1} (with $s = m+1$ polynomials and  $m+1$ variables) and 
derive
$$
\log \fA_k \ll_{d,h,m}  (dm)^{k+k(3(m+1)+1)} h+ (dm)^{k(3(m+1)+2)} \ll_{d,h,m}    (dm)^{k(3m+5)} . 
$$

Next, we  denote by $N_k$ the number of points of 
$V_k$ over $\C$, which is equal to 
the number of periodic points of order $k$ of $R_1,\ldots,R_m$ over $\C$.  
Using the degree bounds~\eqref{eq:Deg G},  by  B{\'e}zout theorem
we obtain $$N_k\le 2(md)^k\(m^{k}d^{k}+1\)^{m}\leq(2m^kd^k)^{m+1},$$ which yields the desired bound.

\subsection{Proof of  Theorem~\ref{thm:Cycles-Poly}}

As in the proof of Theorem~\ref{thm:Cycles}, the result is an
immediate consequence of Theorem~\ref{thm:1} and
Lemma~\ref{lem:HeighIter-Poly}.  Indeed, we apply Theorem~\ref{thm:1}
with
$$
V_k=Z(F_1^{(k)}-X_1,\ldots,F_m^{(k)}-X_m),
$$
getting, after simple calculations, that 
$$
\log \fA_k \ll_{d,h,m} 
d^{k+k(3m+1)} h+ d^{k(3m+2)} \ll_{d,h,m} d^{k(3m+2)}. 
$$
We now denote by $N_k$ the number of points of $V_k$ over $\C$, which
is equal to the number of periodic points of order $k$ of
$F_1,\ldots,F_m$ over $\C$.  Using Lemma~\ref{lem:HeighIter-Poly} and
the fact that $N_k\le d^{km}$, we obtain immediately the desired
bound.

%

\subsection{Lower bounds on the number of $k$-periodic points}

The bound on the $k$-periodic points given by
Theorem~\ref{thm:Cycles-Poly} is tight for some particular polynomial
systems. Indeed, let $d\ge 0$ and consider the system
$\bfF=(F_1,\ldots, F_m)$ with $F_{i}=X_{i}^{d}$ .  For $k\ge 1$, the
$k$-th iterate is given by $ F_{i}^{(k)}=X_{i}^{d^{k}}$, $i=1,\ldots,
m.$ A $k$-periodic point is a solution to the system
\begin{equation}
  \label{eq:15}
  X_{i}^{d^{k}}-X_i=0,  \quad 
i=1,\ldots, m. 
\end{equation}
This system of equations has a finite number of solution over the
complex numbers.  Set $\fA=d^{k}-1$. If $p$ is a prime not dividing
$\fA$, then the system of equations~\eqref{eq:15} has exactly $d^{km}$
solutions in $\ov \F_{p}^{m}$. Hence, the reduction of $\bfF$ modulo
$p$ has exactly $d^{km}$ periodic points of order $k$.



\section{Iterations Generically Escaping a Variety}
\label{sec:OrbVar GenAvoid}

\subsection{Problem formulation and definitions}
\label{sec:orbit gen}

We  next   study of the frequency of the orbit intersections
of two rational function systems.  In the univariate
case, Ghioca, Tucker and Zieve~\cite{GTZ1,GTZ2} have proved that, if
two univariate nonlinear complex polynomials have an infinite
intersection of their orbits, then they have a common iterate. 
No  results of this kind are known for arbitrary rational
functions.  

The analogue of this result by Ghioca, Tucker and Zieve~\cite{GTZ1,GTZ2}  cannot hold
over finite fields.  Instead, we obtain an upper bound for the
frequency of the orbit intersections  of a rational 
function system. More
generally, we bound the number of points in such an orbit that belong
to a given algebraic variety.

As before, we first obtain results for general systems of rational 
functions and polynomials, and we then obtain stronger bounds 
for systems of the form~\eqref{eq:Polysyst}.

Let $K$ be a field and 
$$
\bfR = (R_1,\ldots,R_m), \qquad R_1,\ldots,R_m \in K(\bfX)
$$
a system of $m$ rational functions in $m$ variables over $K$ as in
\eqref{eq:16}. For $n\ge 1$, we denote by $\bfR^{(n)}$ the $n$-th
iteration of this system, as long as this iteration is well-defined. 

Given an initial point $\bfw\in \ov K^{m}$, we consider the sequence
given by 
$$
 \bfw_{0}=\bfw \mand \bfw_{n}=\bfR(\bfw_{n-1}) \text{ for } n\ge 1,
$$
as in~\eqref{eq:Orb R}.  As discussed after \eqref{eq:Orb R}, this
sequence terminates when $\bfw_n$ is a pole of the system $\bfR$.
Recall that the orbit of $\bfw$ is the the subset
$\Orb_\bfR(\bfw)=\{\bfw_{n}\mid n\ge1\}\subset \ov K$. We put
\begin{equation}
 \label{def:T(w)}
T(\bfw)=   \# \Orb_\bfR(\bfw)\in \N\cup \{\infty\}.
\end{equation}




Now let $K=\Q$ and, for $n\ge 1$, write 
$$
R_i^{(n)}= \frac{F_{i,n}}{G_{i,n}}
$$
with coprime
$F_{i,n}, G_{i,n}\in\Z[\bfX]$ and $G_{i,n}\ne 0$, 
as in~\eqref{eq:RedForm}.
Given a prime $p$ such that  $G_{i,j}\not\equiv0 \pmod{p}$,
$j=1,\ldots, n$, we can
consider the reduction modulo $p$ of the iteration $\bfR^{(n)}$. We 
denote it by
$$
  \bfR_{p}^{(n)}=(R_{1,p}^{(n)},\ldots,R_{m,p}^{(n)})\in \F_{p}(\bfX)^{m}.
$$

Let $V\subset \A^{m}_{\Q}$ be the affine algebraic variety over $\Q$ defined
by a system of polynomials $P_i\in\Z[\bfX]$,
$i=1,\ldots,s$. For a prime $p$, we denote by
$V_{p}\subset\A^{m}_{\F_{p}}$ the variety over $\F_{p}$ defined by the
reduction modulo $p$ of the system $P_{i}$, $i=1,\ldots, s$.  

Let $\bfw\in\ov\F_p^m$ be an initial point, $N\in
\N$, and suppose that $G_{i,j}\not \equiv 0 \pmod{p}$, $j=0,\ldots,
N-1$. We then define
$$
\fV_{\bfw}(\bfR,V;p,N)=\left\{ n\in \{0,\ldots, N-1\}  \mid  
  \bfR_p^{(n)}(\bfw)\in V_p(\ov \F_{p})\right\}.
$$ 
Namely, this is the set of values of $n\in \{0,\ldots, N-1\}$ such that
the iterate $ \bfR_p^{(n)}(\bfw)$ is defined and lies in the set $
V_p(\ov \F_{p})$.  One of our goals is obtaining upper bounds on
$\#\fV_{\bfw}(\bfR,V;p,N)$ that are uniform in~$\bfw$.
%


We now define the following 
class of pairs $(\bfR, V)$ of systems of rational functions and varieties:


\begin{definition}  \label{def:2}
 With notation as above, we say that the  iterations of $\bfR$ \emph{generically escape}
$V$ if, for every integer $k\ge 1$, the $k$-th iteration of
     $\bfR$ is well-defined and the set
$$
\{\bfw \in \C^m \mid \(\bfw,\bfR^{(k)}(\bfw)\)\in V(\C)\times V(\C)\}      
$$
is finite.
\end{definition}
    

We expect that this property of generic escape
 is satisfied for a
``random'' pair $(\bfR,V)$ consisting of a system and a variety of
dimension at most $m/2$.

We consider now two rational function systems $\bfR, \bfQ\in
\Q(\bfX)^{m}$.  For $N\in \N$, let $p$ be a prime such that that the
iterations $\bfR^{(j)}$ and $\bfQ^{(j)}$, $j=0,\ldots, N-1$, can be
reduced modulo $p$.  For $\bfu,\bfv  \in \ov \F_p^m$,  we define
$$
\fI_{\bfu,\bfv }(\bfR,\bfQ;p,N)=\left\{n \in \{0,\ldots, N-1\}
\mid \bfR_{p}^{(n)}(\bfu)=\bfQ_{p}^{(n)}(\bfv )\right\}.
$$

To bound the cardinality of this set, we introduce the following
analogue of Definition~\ref{def:2}:


\begin{definition}
  \label{def:3} 
  Let $\bfR, \bfQ\in \Q(\bfX)^{m}$.  We say that the iterations of  $\bfR$ and $\bfQ$
 \emph{generically escape each other\/} if, for every
  $k\in\N$, the $k$-th iterations of $\bfR$ and $\bfQ$ are
  well-defined and the set 
$$
\{\bfw \in \C^m \mid \bfR^{(k)}(\bfw) =
  \bfQ^{(k)}(\bfw)\}
$$ 
is finite.
\end{definition}

\subsection{Systems of rational functions}
\label{sec:orbit var rat fun}

We present our results in a simplified form where  all 
constants depend 
on subsets of the following vector of parameters  
\begin{equation}
\label{eq:rho}
\bfrho = (d, D, h, H, m, s).  
\end{equation} 
Consequently, in our results we use the notation  `$O_\bfrho$' and
`$\ll_\bfrho$', meaning that the implied constants do not  depend on
the  parameters $\varepsilon$ and~$N$. 

We also recall the definition of $T(\bfw)$ given by~\eqref{def:T(w)}.

\begin{theorem} 
\label{thm:OrbIntVar} 
Let $\bfR=(R_1,\ldots,R_m)$ be a system of $m\ge 2$ rational functions
in $\Q(\bfX)$ of degree at most $d\ge 2$ and of height at
most $h$.  Let $P_1,\ldots,P_s\in\Z[\bfX]$ of degree at most
$D$ and height at most $H$, and denote by $V\subset \A^{m}_{\Q}$ the
variety defined by this system of polynomials.  Assume that the iterations
of $\bfR$ generically escape $V$.  Then, there is a constant
$c(\bfrho)> 0$ such that for any real $\varepsilon >0$ and   $N\in \N$ with
\begin{equation}
\label{eq:eps rat}
N\ge \exp\(\frac{c(\bfrho) }{\varepsilon}\),
\end{equation} 
 there exists $\fB \in \N$ with
$$
\log \fB \le \exp\(
\frac{c({\bfrho})}{\varepsilon}\)
$$
such that, if $p$ is a prime number not dividing $\fB$, then
for any $\bfw \in \ov \F_p^m$ with $T(\bfw) \ge N$,
$$
\frac{\# \fV_{\bfw}(\bfR,V;p,N)  }{N}
 \le \varepsilon.
$$
\end{theorem}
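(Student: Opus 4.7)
The plan is to apply Theorem~\ref{thm:1} to the auxiliary $0$-dimensional varieties
$$
W_k = \{\bfw \in \C^m : \bfw \in V(\C) \text{ and } \bfR^{(k)}(\bfw) \in V(\C)\}
$$
for $k = 1, \ldots, K$, with threshold $K = \lceil 2/\varepsilon \rceil$, and combine the resulting pointwise-count control with a gap-counting pigeonhole argument on the index set $\fV_\bfw(\bfR,V;p,N)$. The generic escape hypothesis guarantees that $W_k(\C)$ is finite for every $k$; hence Theorem~\ref{thm:1} produces, for each $k$, an integer $\fA_k$ such that $\#W_{k,p}(\ov\F_p) = T_k := \#W_k(\C)$ whenever $p \nmid \fA_k$. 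I would then set $\fB = \prod_{k=1}^K \fA_k$ and fix a prime $p \nmid \fB$.

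Enumerate $\fV_\bfw(\bfR,V;p,N) = \{i_1 < i_2 < \cdots < i_M\}$ and write $k_j = i_{j+1} - i_j$ for the consecutive gaps. The hypothesis $T(\bfw) \geq N$ ensures that the iterates $\bfw_j := \bfR_p^{(i_j)}(\bfw)$ are pairwise distinct. Each $\bfw_j$ lies in $V_p$ together with its $\bfR_p^{(k_j)}$-image, so $\bfw_j \in W_{k_j,p}(\ov\F_p)$, and the multiplicities $\nu_k := \#\{j : k_j = k\}$ satisfy $\nu_k \leq T_k$ for all $1 \leq k \leq K$, while $\sum_{k \geq 1} k\,\nu_k = i_M - i_1 \leq N-1$. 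Splitting $M - 1 = \sum_k \nu_k$ at the threshold $K$ yields
$$
M - 1 \leq \sum_{k=1}^K \nu_k + \frac{1}{K}\sum_{k > K} k\,\nu_k \leq S_K + \frac{N-1}{K},
$$
with $S_K := \sum_{k=1}^K T_k$. The choice $K = \lceil 2/\varepsilon \rceil$ makes the second term at most $\varepsilon N/2$, so it suffices to verify $S_K \leq \varepsilon N /2 - 1$, which follows from the lower bound on $N$ in~\eqref{eq:eps rat} once $S_K$ is appropriately controlled.

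To estimate $S_K$ and $\log \fB$, I would, for each $k \leq K$, assemble a polynomial system cutting out $W_k$ consisting of the $P_i$'s, the numerators of the compositions $P_i \circ \bfR^{(k)}$ after clearing denominators, and an auxiliary variable $X_0$ together with the equation $1 - X_0 \prod_{i,j} G_{i,j} = 0$ that discards orbit branches passing through poles, exactly as in the proof of Theorem~\ref{thm:Cycles}. By Lemma~\ref{lem:HeighIter}, the degrees and heights of the numerators/denominators of $\bfR^{(k)}$ grow at most exponentially in $k$; hence the degree, height and number of polynomials of the $W_k$-system are bounded by $\exp(O_\bfrho(k))$. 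The Bezout bound then gives $T_k \leq \exp(O_\bfrho(k))$ and Theorem~\ref{thm:1} gives $\log \fA_k \leq \exp(O_\bfrho(k))$. Summing over $k \leq K$ and plugging $K = O(1/\varepsilon)$ yields $S_K \leq \exp(c_1(\bfrho)/\varepsilon)$ and $\log \fB \leq \exp(c_2(\bfrho)/\varepsilon)$, from which the conclusion $M/N \leq \varepsilon$ and the advertised bound on $\log \fB$ follow for a suitable $c(\bfrho)$.

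The main obstacle is the careful setup and parameter tracking of the system defining $W_k$: one must (i) clear denominators in each $P_i \circ \bfR^{(k)}$ without introducing spurious zeros, (ii) verify via the auxiliary $X_0$-equation that the resulting zero set in $\C^{m+1}$ is in bijection with $W_k(\C)$ and therefore finite, so that Theorem~\ref{thm:1} is applicable, and (iii) propagate the exponential-in-$k$ growth from Lemma~\ref{lem:HeighIter} through Theorem~\ref{thm:1} precisely enough that the combined contribution of the $K = \lceil 2/\varepsilon \rceil$ thresholds stays within a single $\exp(c(\bfrho)/\varepsilon)$ envelope.
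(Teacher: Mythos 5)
Your proposal is correct and reaches the same conclusion, but its combinatorial core is genuinely different from the paper's. The paper argues by contradiction: assuming $\#\fV_{\bfw}(\bfR,V;p,N)=M>\varepsilon N$, it invokes Lemma~\ref{lem:Combin} to extract a \emph{single} popular gap length $r\ll\varepsilon^{-1}$ occurring $\gg\varepsilon^{2}N$ times, applies Theorem~\ref{thm:1} to the one escape variety $\{\bfz\in V \mid \bfR^{(r)}(\bfz)\in V\}$ (cut out, as in your setup, by the $P_\nu$, the cleared compositions $P_\nu(\bfR^{(r)})$ and the auxiliary $X_0$-equation), and derives $\varepsilon^{2}N\le\exp(c_1(\bfrho)/\varepsilon)$, contradicting \eqref{eq:eps rat}. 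You instead bound $M$ directly by splitting the gaps $k_j$ at the threshold $K=\lceil 2/\varepsilon\rceil$: gaps $k\le K$ contribute at most $\sum_{k\le K}T_k\le\exp(c_1(\bfrho)/\varepsilon)$ (using $p\nmid\fA_k$ and the distinctness of the orbit points guaranteed by $T(\bfw)\ge N$), while gaps $k>K$ contribute at most $(N-1)/K\le\varepsilon N/2$. Both routes use exactly the same analytic ingredients (Lemmas~\ref{lem:HeighCompos} and~\ref{lem:HeighIter}, B\'ezout, Theorem~\ref{thm:1}, the $X_0$ trick to discard poles), and the bookkeeping of denominators and of "definedness'' of reduced iterates is at the same level of rigour as the paper's. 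What your version buys: since the popular gap $r$ in the paper depends on $\bfw$ and $p$, the paper's $\fB$ should strictly speaking already be a product over all admissible $r\ll\varepsilon^{-1}$; your choice $\fB=\prod_{k\le K}\fA_k$ makes this uniformity in $\bfw$ automatic (at the harmless cost of a factor $K$ in $\log\fB$, still within $\exp(c(\bfrho)/\varepsilon)$), and it yields the explicit inequality $M\le\sum_{k\le K}T_k+\varepsilon N/2+1$ rather than a contradiction, which is slightly more informative. Structurally, your argument is closer in spirit to the paper's proof of Theorem~\ref{thm:OrbIntVar Alt}, where $\fB$ is likewise assembled as a product of moduli indexed by all relevant configurations; the paper's route, by contrast, isolates the single worst gap and therefore only needs the loss $\varepsilon^{2}N$ from Lemma~\ref{lem:Combin}, which is why both approaches end up with the same shape of hypotheses and bounds.
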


We derive from Theorem~\ref{thm:OrbIntVar} the following bound for the
number of orbit intersection for two systems of rational functions. 

\begin{cor} 
\label{cor:OrbInt} 
Let $\bfR=(R_{1},\ldots,R_{m})$ and $\bfQ=(Q_{1},\ldots,Q_{m})$ be two
systems  of rational functions in $\Q(\bfX)$ of degree at most
$d$ and of height at most $h$ such that their iterations generically escape 
each other. 
Then there is a  constant $c(\bfrho) > 0$ such that, for
any real $\varepsilon >0$ and  $N\in \N$ with
$$
N\ge \exp\( \frac{c(\bfrho) }{\varepsilon}\),
$$   
there exists  $\fB \in \N$ with
$$
\log \fB \le \exp\(
\frac{c({\bfrho})}{\varepsilon}\)
$$
such that, if $p$ is a prime number not dividing $\fB$, then 
for any $\bfu,\bfv  \in
\ov \F_p^m$ with $T(\bfu) , T(\bfv) \ge N$,
$$
\frac{\# \fI_{\bfu,\bfv}(\bfR,\bfQ;p,N)  }{N}
 \le \varepsilon.
$$
\end{cor}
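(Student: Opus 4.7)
The strategy is a straightforward reduction: I plan to package the two systems into a single product system on twice the number of variables, and convert the orbit intersection problem into a problem about orbits hitting a subvariety, so that Theorem~\ref{thm:OrbIntVar} applies directly. Concretely, on $\A^{2m}_{\Q}$ with coordinates $(\bfX,\bfY)$, define the product system
$$
\bfS(\bfX,\bfY) = (\bfR(\bfX), \bfQ(\bfY)) \in \Q(\bfX,\bfY)^{2m},
$$
which inherits degree at most $d$ and height at most $h$ from $\bfR$ and $\bfQ$ because the two blocks of variables are disjoint. Let $V \subset \A^{2m}_{\Q}$ be the diagonal, cut out by the $m$ linear polynomials $P_i = X_i - Y_i$, so that $V$ is defined by $s=m$ polynomials of degree $D=1$ and height $H=0$. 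Since $\bfS^{(n)}(\bfu,\bfv) = (\bfR^{(n)}(\bfu), \bfQ^{(n)}(\bfv))$, we have $\bfS_p^{(n)}(\bfu,\bfv) \in V_p(\ov\F_p)$ if and only if $\bfR_p^{(n)}(\bfu) = \bfQ_p^{(n)}(\bfv)$, which yields the identity
$$
\fI_{\bfu,\bfv}(\bfR,\bfQ;p,N) = \fV_{(\bfu,\bfv)}(\bfS,V;p,N).
$$

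To apply Theorem~\ref{thm:OrbIntVar} I need two further facts. First, the iterations of $\bfS$ generically escape $V$: a point $(\bfx,\bfy) \in \C^{2m}$ with $(\bfx,\bfy) \in V$ and $\bfS^{(k)}(\bfx,\bfy) \in V$ satisfies $\bfx = \bfy$ and $\bfR^{(k)}(\bfx) = \bfQ^{(k)}(\bfx)$, and by Definition~\ref{def:3} the latter defines a finite subset of $\C^m$, whose diagonal embedding is finite in $\C^{2m}$, matching the condition in Definition~\ref{def:2}. Second, whenever $T(\bfu) \ge N$, the pairs $(\bfR_p^{(n)}(\bfu), \bfQ_p^{(n)}(\bfv))$ for $n=0,\ldots,N-1$ are automatically distinct (their first coordinates already are), so the orbit length of $(\bfu,\bfv)$ under $\bfS$ is also at least $N$. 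With these in hand, Theorem~\ref{thm:OrbIntVar} applied to $(\bfS, V)$ with the parameter vector $(d, 1, h, 0, 2m, m)$ produces a constant depending only on $(d, h, m)$ and an integer $\fB$ of the required size for which the conclusion transfers, via the identity above, to $\fI_{\bfu,\bfv}(\bfR,\bfQ;p,N)$.

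I anticipate no substantial obstacle beyond verifying these two matching conditions. The only technical point worth care is confirming that the height and degree of $\bfS$ are unchanged when viewed as rational functions in $2m$ rather than $m$ variables, which is immediate since $\bfR$ and $\bfQ$ depend on disjoint variable blocks. The reduction is entirely formal once the diagonal variety $V$ is chosen; the real work of the corollary is already done inside Theorem~\ref{thm:OrbIntVar}.
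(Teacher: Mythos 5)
Your reduction is correct and is essentially the paper's own argument: form the product system $(\bfR(\bfX),\bfQ(\bfY))$ on $\A^{2m}_{\Q}$, take $V$ to be the diagonal $X_j=Y_j$ ($s=m$, $D=1$, $H=0$), observe that ``generically escape each other'' for $(\bfR,\bfQ)$ is exactly ``generically escapes $V$'' for the product system, and invoke Theorem~\ref{thm:OrbIntVar}. The only (harmless) difference is that the paper first shifts to the smallest coincidence time $n_0$ and applies the theorem to the common point $\bfw=\bfR^{(n_0)}_p(\bfu)$ with horizon $N-n_0$, whereas you apply it directly at the initial point $(\bfu,\bfv)$ with horizon $N$; since the theorem is stated for an arbitrary initial point with orbit length at least $N$, and $T(\bfu),T(\bfv)\ge N$ guarantees the product orbit has $N$ defined and pairwise distinct points (definedness needs both hypotheses, not just $T(\bfu)\ge N$), your direct application is equally valid and even avoids rechecking the lower bound on $N-n_0$.
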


\begin{rem}
\label{rem: eps and p}
Alternatively, the bounds in  Theorem~\ref{thm:OrbIntVar} and
Corollary~\ref{cor:OrbInt} can be formulated taking $\varepsilon$ as a function of $p$. More precisely,  for some constant $c_0(\bfrho)>0$,
one can take
\begin{itemize}
\item $\varepsilon = c_0(\bfrho) /\log \log p$ for any prime $p$ and eliminate any 
influence of $\fB$. Since $\fB \le  \exp\exp(c(\bfrho) \varepsilon^{-1})$, the condition 
$p\nmid \fB$ is automatically satisfied for such $\varepsilon$,
provided that 
       $c_0(\bfrho)$ is large enough;
\item $\varepsilon = c_0(\bfrho)/\log Q$ for all but $o( Q/\log Q)$
  primes $p\le Q$, since $\fB$ has at most $ \log \fB \le
  \exp({c({\bfrho})}{\varepsilon^{-1}})$ prime divisors.
\end{itemize}
\end{rem} 

\begin{rem}
\label{rem: slow} 
For systems with slower than generic  growth of the
degree and height the bounds in  Theorem~\ref{thm:OrbIntVar} and
Corollary~\ref{cor:OrbInt} can be improved. Some examples of such 
systems are given in \S\ref{sec:rem}. \end{rem}

\subsection{Preparation}

We need the following simple combinatorial statement.

\begin{lemma}
\label{lem:Combin}
Let $2 \le M<N/2$.  For any sequence 
$$
0 \le n_1< \ldots< n_M\le N,  
$$
there exists $r\le 2N/(M-1)$ such that $n_{i+1} - n_i = r$ for at
least $(M-1)^2/4N$ values of $i \in \{1, \ldots, M-1\}$.
\end{lemma}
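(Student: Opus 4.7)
The plan is to combine a first-moment (Markov) argument with a pigeonhole on the consecutive differences. Specifically, I would introduce $d_i = n_{i+1} - n_i$ for $i = 1, \ldots, M-1$ and exploit the telescoping identity
$$
\sum_{i=1}^{M-1} d_i = n_M - n_1 \le N.
$$

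First, I would observe that the number of indices $i$ with $d_i > 2N/(M-1)$ is strictly less than $(M-1)/2$, since otherwise the sum of the $d_i$'s would exceed $\bigl((M-1)/2\bigr)\cdot 2N/(M-1) = N$, contradicting the telescoping bound. Consequently, at least $(M-1)/2$ of the indices $i$ satisfy $1 \le d_i \le 2N/(M-1)$.

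Next, I would apply pigeonhole: these at least $(M-1)/2$ values $d_i$ lie in the finite set of positive integers bounded by $\lfloor 2N/(M-1)\rfloor \le 2N/(M-1)$, so some particular value $r$ in this range must be attained by at least
$$
\frac{(M-1)/2}{2N/(M-1)} \;=\; \frac{(M-1)^2}{4N}
$$
of the indices $i \in \{1,\ldots,M-1\}$. Taking this $r$ yields the claim.

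There is essentially no obstacle here; the only minor care needed is to make sure the Markov step uses a strict inequality so that the number of ``large'' differences is strictly less than $(M-1)/2$, leaving at least $(M-1)/2$ differences in the desired range for the pigeonhole step. The assumption $M < N/2$ is only needed to ensure that the pigeonhole bucket size $2N/(M-1)$ is at least $1$, so that integer values of $r$ are available.
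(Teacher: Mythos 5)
Your proof is correct and takes essentially the same route as the paper's: a first-moment (Markov) bound showing that fewer than $(M-1)/2$ consecutive gaps can exceed the threshold $2N/(M-1)$, followed by pigeonhole/averaging over the at most $\lfloor 2N/(M-1)\rfloor$ possible values of the small gaps. The paper merely phrases this with a general cutoff $t$ (bounding $\sum_{s\le t}I(s)\ge M-1-N/(t+1)$ and averaging) before specializing to $t=\lfloor 2N/(M-1)\rfloor$, which is the same argument in a slightly different order.
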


\begin{proof}
We denote by $I(s)$ the number of $i=1, \ldots, M-1$ 
with $n_{i+1} - n_i = s$. 
Clearly 
$$
\sum_{s=1}^{N} I(s) = M-1 \mand
\sum_{s=1}^{N} I(s)s = n_{M}- n_1 \le N.
$$
Thus, for any integer $t \ge 1$ we have 
\begin{equation*}
\begin{split}
\sum_{s=1}^{t} & I(s) = M-1 - \sum_{s=t+1}^N I(h) \\
&\ge   M-1 - \frac{1}{t+1}\sum_{s=t+1}^N I(s)s
\ge  M-1 - \frac{1}{t+1}N.
\end{split}
\end{equation*}
Hence, there exists $r \in \{1, \ldots, t\}$ 
with 
\begin{equation}
\label{eq:I(r)}
I(r) \ge \frac{1}{t} \sum_{s=1}^{t}  I(s) \ge  \frac{M-1  -  N/(t+1)}{t}.
\end{equation}
We now set 
$t =\fl{2N/(M-1)}$.  
Clearly 
$$
1 \le t \le \frac{2N}{(M-1)} \mand  \frac{N}{t+1} < \frac{M-1}{2}.
$$
Hence
$$
\frac{M-1  -  N/(t+1)}{t}\ge \frac{M-1}{2t} \ge \frac{(M-1)^2}{4N}, 
$$
which together with~\eqref{eq:I(r)} concludes the proof.
\end{proof}

\subsection{Proof of Theorem~\ref{thm:OrbIntVar}}
Let $p$ be a prime and $n\in\N$.  As at the beginning of this section,
we denote by $\bfR_p^{(n)}$ and $ V_p$ the reduction modulo $p$ of
$\bfR^{(n)}$ and $V$, respectively.  Fix an initial point $\bfw \in
\ov \F_p^m$ and let $M\in \N$ be the number of values of $n\in
\{0,\ldots, N-1\}$ such that $\bfR_p^{(n)}(\bfw)\in V_p$.

Suppose  that 
\begin{equation}
\label{eq:large M}
M > \varepsilon N\ge 2.
\end{equation}
Then take $r \le 2N/(M-1)$ as in Lemma~\ref{lem:Combin} and let $\cN$
be the set of $ n\in \{0,\ldots, N-1\}$ with
\begin{equation}
\label{eq:orb var}
\bfR_p^{(n)}(\bfw)\in V_p \mand  \bfR_p^{(n+r)}(\bfw) = 
\bfR_p^{(r)}\( \bfR_p^{(n)}(\bfw)\)\in V_p.
\end{equation}
By Lemma~\ref{lem:Combin},
\begin{equation}
\label{eq:large set N}
\# \cN \ge \frac{(M-1)^2}{4N} \gg \varepsilon^2 N.
\end{equation}

By~\eqref{eq:large M}, we have $r \ll  \varepsilon^{-1}$.

Since iterations of $\bfR$ 
generically escape $V$,  the set 
$ \{\bfz \in V \mid  \bfR^{(r)}(\bfz)\in V\}$ is finite.
This set is defined by the following $2s+1$ equations
\begin{equation}
\label{eq:Var}
\begin{split}
P_\nu(\bfX) = P_\nu\(\bfR^{(r)}(\bfX)\)=&0, \qquad \nu =1,   \ldots, s,\\
  1-X_0\prod_{i=1}^m G_{i,r}(\bfX) = &0
\end{split}
\end{equation}
where, as in the proof of  Theorem~\ref{thm:Cycles}, we write 
 $$
 R_i^{(r)}=\frac{F_{i,r}}{G_{i,r}},\qquad F_{i,r}, G_{i,r}\in\Z[\bfX],
 $$
with relative prime polynomials $F_{i,r}, G_{i,r}\in\Z[\bfX]$,
and introduce one more variable $X_0$. 

From now on, we denote by $c_{i}(\bfrho)$, $i=1,2, \ldots$, a sequence
of suitable constants depending only on the parameters in $\bfrho$.
By B{\'e}zout's theorem and the degree bound of
Lemmas~\ref{lem:HeighCompos} and~\ref{lem:HeighIter} we have
\begin{equation}
\label{eq:Bezout}
\begin{split}
  \# \{\bfz \in V & \mid  \bfR^{(r)}(\bfz)\in V\} \\&\le
D^s(Dd^{r}m^{r-1})^s ((d^{r}m^{r-1})^{m}+1)  \le
\exp\(\frac{c_{1}(\bfrho)}{\varepsilon}\).
\end{split}
\end{equation}
Using the height bound of Lemma~\ref{lem:HeighIter}, we also obtain
$$
\h(\bfR_i^{(r)}) \le  \exp\(\frac{c_{2}(\bfrho)}{\varepsilon}\), \qquad 
i = 1, \ldots, m.
$$
Therefore, by Lemma~\ref{lem:HeighCompos}, clearing the denominators,
we see that the $2s+1$ polynomials in~\eqref{eq:Var} have degree and
height of size bounded by $\exp({c_{3}(\bfrho)}{\varepsilon^{-1}})$.

Hence, by Theorem~\ref{thm:1}, there is a positive
integer $\fB$ 
with 
$$
\log \fB \le \exp\(\frac{c_{4}(\bfrho)}{\varepsilon}\)
$$
such that, if $p\nmid \fB$, then
$$
\# \{\bfz \in V_p \mid  \bfR_p^{(r)}(\bfz)\in  V_p\} =
\# \{\bfz \in V \mid  \bfR^{(r)}(\bfz)\in   V\}. 
$$
Since $N \le T(\bfw)$, the points $\bfR_p^{(n)}(\bfw)$, $n =
0,\ldots, N-1$, are pairwise distinct.  Hence, 
$$
  \# \cN\le \# \{\bfz \in V_p \mid  \bfR_p^{(r)}(\bfz)\in
  V_p\}. 
$$
From~\eqref{eq:orb var}, ~\eqref{eq:large set N} and~\eqref{eq:Bezout}
we deduce that
$$
\varepsilon^2 N \le \exp\(\frac{c_{1 }(\bfrho)}{\varepsilon}\).  
$$
Choosing 
$c( \bfrho) =  \max\{c_{4}(\bfrho), c_{1}(\bfrho)+1\} $,
this contradicts~\eqref{eq:eps rat}. Hence $M \le \varepsilon N$ and
the result follows.

\subsection{Proof of Corollary~\ref{cor:OrbInt}}
\label{sec:Orb vs Var} 

If $\fI_{\bfu,\bfv}(\bfR,\bfQ;p,N) $ is empty, the statement is
trivial. Otherwise, let $n_{0}\in \N$ be the smallest element in this
set. Then
$$
 \# \fI_{\bfu,\bfv}(\bfR,\bfQ;p,N) 
=  \# \fI_{\bfw,\bfw}(\bfR,\bfQ;p,N-n_{0}) 
$$
with $\bfw=\bfR^{(n_{0})}(\bfu)$. 
Moreover, 
$$
\fI_{\bfw,\bfw}(\bfR,\bfQ;p,N-n_{0}) 
= \fV_{\bfw}((\bfR(\bfX),\bfQ(\bfY)), V;p,N-n_{0}) 
$$
for the $2m$-dimensional system of rational functions
$$
(\bfR(\bfX),\bfQ(\bfY))
=(R_{1}(\bfX),\ldots ,R_{m}(\bfX),
 Q_{1}(\bfY)),\ldots,Q_{m}(\bfY))),
 $$
and the  variety $V$ defined by the polynomials
$$
P_j=X_j-Y_j,\qquad j=1,\ldots,m.
$$
The hypothesis the orbits of $\bfR$ and $\bfQ$ generically escape each other 
implies that the system $(\bfR(\bfX),\bfQ(\bfY))$ generically 
escapes the variety $V$. The statement then follows from
Theorem~\ref{thm:OrbIntVar}.

\subsection{Examples of iterations generically escaping  a
variety}
\label{sec:Ex Orbit Var}

Clearly, the problem of finding nontrivial pairs $(\bfR,V)$ consisting of a
system $\bfR$ of rational functions with iterations that generically escape
a variety $V$ is
interesting in its own. Here we give a family of examples of this
kind, as an application of a result of Dvir, Koll{\'a}r and
Lovett~\cite[Theorem~2.1]{DKL}.

Let $m =2 s$ be even and let 
$$
A=(a_{i,j})_{i,j}\in \Z^{s\times m}
$$ 
be an $s\times m$ matrix with 
integer entries such that any $s\times s$ minor is nonsingular. 
For instance,  one may  construct such a matrix  as a  Vandermonde or 
Cauchy matrix.

We now choose $2m$ positive integers with 
\begin{equation}
\label{eq:e ej}
 d_1 > \ldots > d_m \mand  e_1 > \ldots > e_m > d_1^s
\end{equation}
such that $ \gcd(d_ie_i, d_je_j) = 1$, $  1\le i,j \le m,  \ i \ne j$.

We consider the monomial system $\bfF = (X_1^{e_1}, \ldots,
X_m^{e_m})\in \Z[\bfX]^{m}$ and the variety $V\subset \C^{m}$ defined
by the $s$ polynomials
$$
P_j= \sum_{i=1}^{m} a_{j,i} X_i^{d_i}, 
\qquad j =1, \ldots, s.
$$
This variety is a complete intersection of degree
at most $d_1^s$. 

For any point $\bfw \in \C^m$ we have
$\(\bfw,\bfF^{(k)}(\bfw)\)\in V(\C)\times V(\C)$ if and only if $\bfw
\in U_k\cap V$, where $U_k$ is the variety defined by the polynomials
$$P_j(X_1^{e_1^k}, \ldots, X_m^{e_m^k}) =
\sum_{i=1}^{m} a_{j,i} X_i^{d_ie_i^k}, 
\qquad j =1, \ldots, s.
$$
As $V$ is of dimension $m-s = s$,
and recalling the conditions~\eqref{eq:e ej}, we see that
$$
d_1e_1^k > \ldots > d_me_m^k > d_1^s \ge \deg V.
$$
Therefore,~\cite[Theorem~2.1]{DKL} applies and yields the finiteness
of $U_k\cap V$. Hence, the iterations of the  monomial system $\bfF$ generically escape
the variety $V$, as desired.

\section{Orbits on Varieties 
under the Uniform Dynamical
Mordell-Lang Conjecture}
\label{sec:OrbVar UML}

\subsection{Varieties satisfying the uniform dynamical Mordell--Lang conjecture}
\label{sec:Orbit Var Alt}

Informally, the dynamical Mordell--Lang conjecture asserts that the
intersection of an orbit of an algebraic dynamical system (in affine
or projective space over a field of zero characteristic) with a given
variety is a union of a finite ``sporadic'' set and finitely many
arithmetic progressions. Among other sources, this conjecture stems
from the celebrated {\it Skolem-Mahler-Lech theorem}~\cite{BeLa}.

Here we consider a class of algebraic dynamical systems and varieties
that satisfy the following stronger uniform condition.

\begin{definition}
  Let  $\bfR\in \Q(\bfX)^{m}$ be a system of rational
  functions over $K$ and $V\subset \A_{\Q}^{m}$ an affine variety.  The
  intersection of the orbits $\bfR$ with $V$ is \emph{$L$-uniformly
    bounded} if there is a constant $L$ depending only on $\bfR$ and
  $V$ such that for all initial values $\bfw \in \ov \Q^m$,
$$
\# \left\{ n\in\N \mid \bfw_n \in V(\ov \Q)\right\}\le L,
$$
with  $ \bfw_n$ is as in~\eqref{eq:Orb R}.
\end{definition}

In this section, we reconsider the problem 
of~\S\ref{sec:OrbVar GenAvoid} of bounding the number of orbits of a given system of
rational functions lying in a variety satisfying this uniformity
condition.


The boundedness of the number of orbit elements that fall in a
variety, or more specialised questions of orbit intersections
(see~\S\ref{sec:Orb vs Var} where this link is made explicit), has
recently been an object of active study,
see~\cite{BeGiTu,BGKT1,BGKT2,GTZ1,GTZ2,OstSha,SiVi} and the references
therein.  Although we believe that the $L$-uniformly
boundedness condition is generically satisfied, proving it for general
classes of systems appear to be difficult.


\subsection{Systems of rational functions}

Here we add one parameter $L$ in the definition of  $\bfrho$, so instead 
of~\eqref{eq:rho} it is now given by 
$$
\bfrho = (d, D, h, H, L, m, s) .
$$
We also continue to use  $T(\bfw)$ as given by~\eqref{def:T(w)}. 
We obtain the following result
which is a version of Theorem~\ref{thm:OrbIntVar}.

\begin{theorem} 
\label{thm:OrbIntVar Alt}
Let $\bfR=(R_1,\ldots,R_m)$ be a system of $m\ge 2$ rational functions
in $\Q(\bfX)$ of degree at most $d\ge 2$ and of height at
most $h$.  Let $V$ be the affine algebraic variety defined by the
polynomials $P_1,\ldots,P_s\in\Z[\bfX]$ of degree at most
$D$ and height at most $H$. We also assume that the intersection of
orbits of $\bfR$ with $V$ is $L$-uniformly bounded. 
There
is a  constant $c(\bfrho) > 0$ such that, for any real $\varepsilon >0$, there exists $\fB \in \N$ with
$$
\log \fB   \le \exp\(\frac{c(\bfrho)}{\varepsilon}\)
$$
such that, if $p$ is a prime number not dividing $\fB$, then 
for any  integer 
$$
N\ge \frac{2L}{\varepsilon} +1
$$ 
and  any initial point $\bfw \in \ov \F_p^m$ 
with $T(\bfw) \ge N$, we have
$$
\frac{ \# \fV_{\bfw}(\bfR,V;p,N)  }{N}
 \le \varepsilon.
$$
\end{theorem}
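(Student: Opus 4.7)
The plan is to follow the strategy of Theorem~\ref{thm:OrbIntVar}, replacing the combinatorial Lemma~\ref{lem:Combin} by a pigeonhole extraction of $L+1$ close-together indices, and replacing the generic-escape hypothesis by a direct appeal to the $L$-uniform boundedness.

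I argue by contradiction, assuming that $M:=\#\fV_\bfw(\bfR,V;p,N)>\varepsilon N$. Subdividing $\{0,1,\ldots,N-1\}$ into consecutive blocks of length $B:=\lceil 2L/\varepsilon\rceil$, a short pigeonhole computation shows that the hypothesis $N\ge 2L/\varepsilon+1$ forces some block to contain at least $L+1$ indices from $\fV_\bfw(\bfR,V;p,N)$: otherwise the total count would be at most $L(N/B+1)\le \varepsilon N/2+L<\varepsilon N$, a contradiction. Hence there exist indices $n_1<n_2<\ldots<n_{L+1}$ in $\fV_\bfw(\bfR,V;p,N)$ with $n_{L+1}-n_1<B$. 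Setting $m_i:=n_i-n_1$, so that $0=m_1<m_2<\ldots<m_{L+1}<B$, I introduce the tuple $\bft=(m_2,\ldots,m_{L+1})$ and the affine subvariety $W_\bft\subset\A^{m+1}_\Q$ defined, after clearing denominators and adjoining an auxiliary coordinate $X_0$, by the system
\begin{equation*}
P_j(\bfX)=0,\quad P_j\bigl(\bfR^{(m_i)}(\bfX)\bigr)=0,\quad 1-X_0\prod_{i=1}^{m}\prod_{k=1}^{m_{L+1}}G_{i,k}(\bfX)=0,
\end{equation*}
where $1\le j\le s$, $2\le i\le L+1$, and the $G_{i,k}$ are the denominator polynomials appearing in~\eqref{eq:RedForm}. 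By the $L$-uniform boundedness hypothesis the set $W_\bft(\ov\Q)$ must be empty, as any such point would yield an initial value for $\bfR$ whose orbit over $\ov\Q$ meets $V$ at the $L+1$ distinct times $m_1,\ldots,m_{L+1}$.

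I next bound the degrees and heights of the defining polynomials of $W_\bft$ using Lemmas~\ref{lem:HeighCompos}, \ref{lem:HeighIter} and~\ref{lem:HeightFact}. Since $m_i<B\ll L/\varepsilon$, the resulting bounds are of order $\exp(c_1(\bfrho)/\varepsilon)$. Theorem~\ref{thm:1} applied with $T=0$ then yields an integer $\fA_\bft$ with $\log\fA_\bft\le \exp(c_2(\bfrho)/\varepsilon)$ such that $p\nmid\fA_\bft$ implies $W_\bft(\ov\F_p)=\emptyset$. I then define $\fB$ as the product of the integers $\fA_\bft$ over the at most $\binom{B}{L}\le(2L/\varepsilon)^L$ admissible tuples, times a finite correction factor making every relevant denominator $G_{i,k}$ nonzero modulo $p$; the logarithmic bound $\log\fB\le\exp(c(\bfrho)/\varepsilon)$ then follows, since the binomial and correction factors are absorbed by the single-exponential term in $1/\varepsilon$. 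To conclude, for $p\nmid\fB$ the point $\bfz:=\bfR_p^{(n_1)}(\bfw)\in\ov\F_p^m$ is well-defined thanks to $T(\bfw)\ge N>n_1$, and satisfies $\bfR_p^{(m_i)}(\bfz)=\bfR_p^{(n_i)}(\bfw)\in V_p(\ov\F_p)$ for every $i$, so an appropriate lift of $\bfz$ provides a point of $W_\bft(\ov\F_p)$, contradicting its emptiness.

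The main technical obstacle is the bookkeeping through Lemmas~\ref{lem:HeighCompos}, \ref{lem:HeighIter}, \ref{lem:HeightFact} and Theorem~\ref{thm:1}: one must check that the degrees and heights of the composed polynomials $P_j\circ\bfR^{(m_i)}$, cleared of denominators, really do grow only exponentially in $1/\varepsilon$, and that the multiplicative combination over the $\binom{B}{L}$ tuples and the denominator-correction factor fits within the single-exponential bound $\log\fB\le\exp(c(\bfrho)/\varepsilon)$. Apart from this, the argument is a direct translation of the contradiction strategy used in the proof of Theorem~\ref{thm:OrbIntVar}.
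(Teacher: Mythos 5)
Your proposal follows essentially the same route as the paper's proof: pigeonhole $L+1$ hits into a window of length $O(L/\varepsilon)$, encode ``the orbit meets $V_p$ at these $L+1$ prescribed offsets'' as a polynomial system with the auxiliary variable $X_0$ clearing the denominators $G_{i,k}$, invoke the $L$-uniform boundedness to get emptiness over $\ov\Q$, apply Theorem~\ref{thm:1} with $T=0$ to each offset pattern, and take $\fB$ as the product over the binomially many patterns, which is absorbed into $\exp(c(\bfrho)/\varepsilon)$. The only differences (anchoring the offsets at the first hit rather than at the start of a block of length $\fl{2L/\varepsilon}+1$, and the explicit denominator-correction factor) are cosmetic, so your argument is correct and matches the paper's.
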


\begin{rem}
  One can check that appropriate versions of Remarks~\ref{rem: eps and
    p} and~\ref{rem: slow} apply to Theorem~\ref{thm:OrbIntVar Alt} as
  well.
\end{rem}

%
%

\subsection{Proof of Theorem~\ref{thm:OrbIntVar Alt}}

We set 
$$
M = \fl{2 \varepsilon^{-1} L} +1,
$$
thus in particular $N \ge M$.

For each set $\cL \subseteq \{0, \ldots, M-1\}$ of cardinality 
$\# \cL = L+1$ we consider the system of equations  
$$
P_j(\bfR^{(k)})=P_j\(\frac{F_{1,k}}{G_{1,k}},\ldots,\frac{F_{m,k}}{G_{m,k}}\)=0, \qquad k \in \cL,\ j=1,\ldots,s.
$$
Let $X_0$ be an additional variable, set
$$
\Gamma_{0,k}=1 - X_0 \prod_{i=1}^m\prod_{j=1}^k G_{i,j} 
$$
and let $\Gamma_{j,k}$ be the numerator 
of $P_j(\bfR^{(k)})$.
We now study the following  system of equations in $m+1$ variables:
\begin{equation}
\label{eq:eqk}
\Gamma_{j,k}= 0, \qquad k \in \cL,\  j=0,\ldots,s.
\end{equation}

By Lemmas~\ref{lem:HeighCompos} and~\ref{lem:HeighIter}, we have
$$
\deg \Gamma_{0,k} \le 2d^km^k  \mand \deg \Gamma_{j,k}\le  d^kDm^k,
$$
which we combine in one bound
\begin{equation}
\label{eq:Gammadeg}
 \deg \Gamma_{j,k}\le  d^kDm^k+1,\qquad j=0,\ldots,s.
\end{equation}

Also, by Lemmas~\ref{lem:HeightFact} and~\ref{lem:HeighIter}, exactly as in the proof of Theorem~\ref{thm:Cycles}, we have
\begin{equation}
\label{eq:Gammaheight0}
\h(\Gamma_{0,k})  \ll_\bfrho (dm)^k.
\end{equation}
By Lemmas~\ref{lem:HeighCompos} and~\ref{lem:HeighIter} again, we also have
\begin{equation}
\label{eq:Gammaheight-jk}
\begin{split}
\h(\Gamma_{j,k}) &\le H+Dh\(1+d\frac{d^{k-1} m^{k-1}-1}{dm-1}\)\\ 
&\qquad\quad+dD(3dm+1)\frac{d^{k-1} m^{k-1}-1}{dm-1}\log(m+1)\\
&\qquad\qquad\qquad+D(3d^km^k+1)\log(m+1)\ll_\bfrho  (dm)^k.
\end{split}
\end{equation}
 For simplicity, we  use the bound~\eqref{eq:Gammaheight-jk} also for $\h(\Gamma_{0,k})$, even if we loose slightly in the final bound.
  
 By the assumption on $\bfR$ and $L$, the equations~\eqref{eq:eqk}
 have no common solution $\bfw \in \ov \Q^m$.  By 
 Theorem~\ref{thm:1} together with the bounds~\eqref{eq:Gammadeg},
 \eqref{eq:Gammaheight0} and~\eqref{eq:Gammaheight-jk} and the fact
 that $k \le M-1$, there exists $\fA_{\cL}\in \N$ with
$$
\log \fA_\cL  \ll_\bfrho (d^{M-1}Dm^{M-1}+1)^{3(m+1)+2}
$$
such that, if $p$ is a prime not dividing $\fA_{\cL}$, then the
reduction modulo~$p$ of the system of equations~\eqref{eq:eqk} has no
solution in $\ov \F_{p}^{m}$.

We now set 
$$
\fB = \prod_{\substack{\cL \subseteq \{0, \ldots, M-1\}\\
\# \cL = L+1}} \fA_\cL
$$
and note that $\fB \ge  1$ 
\begin{equation}
\label{eq:Alt Gen}
\log \fB \ll_\bfrho \binom{M}{L+1} (d^{M-1}Dm^{M-1}+1)^{3(m+1)+2} \le
\exp\(\frac{c_{1}(\bfrho)}{\varepsilon}\) 
\end{equation}
for a constant $c_{1}(\bfrho)$. 

Let $p$ be a prime with $p\nmid \fB$.  Suppose that for some $\bfu
\in \ov \F_p^m$ there are at least $\varepsilon N$ values of $n\in
\{0,\ldots, N-1\}$ with $ \bfR_p^{(n)}(\bfu)\in V_p$.  We recall
that $N \ge M$, so $\fl{N/M} +1 \le 2N/M$.  Therefore, there is a
nonnegative integer $i \le \fl{N/M}$ such that there are at least
$$
\frac{\varepsilon N}{\fl{N/M} +1} \ge \frac{1}{2} \varepsilon M  > L
$$ 
values of $n \in \{iM, \ldots, (i+1)M -1\}$ with $
\bfR_p^{(n)}(\bfu)\in V_p.  $ Take $L+1$ such values and write them
as
$$
s< s+ t_1 < \ldots <s+t_{L+1} < s+M
$$
where $s = iM$.
Then, for  $j=1,\ldots,s$ and $\nu=1, \ldots, L+1$, 
$$P_j\(\bfR_p^{(t_\nu)}\(\bfR_p^{(s)}(\bfu)\)\) = 0.
$$
So, setting $\bfw=\bfR_p^{(s)}(\bfu) \in \ov \F_p$, we obtain
$$
P_j\(\bfR_p^{(t_\nu)}\(\bfw\)\)
= 0
$$
for all such $j,\nu$. This implies that $p \mid \fA_\cL$ with $\cL =
\{t_1, \ldots, t_{L+1}\}$, and thus we obtain a contradiction.

%
%
%

\section{Some remarks} 
\label{sec:rem}

Clearly, our results depend on the growth of the degree and the height of the iterates~\eqref{eq:RatIter}.
When this growth is slower than ``generic'', one can expect stronger
bounds.  
For example, this is true for the following family of
systems which stems from that introduced in~\cite{OstShp1}, see
also~\cite{GomOstShp,OstShp2}. 

For $i=1,\ldots, m$, let
\begin{equation}
\label{eq:Polysyst}
F_i\in \Z[X_{i}, X_{i+1},\ldots,X_m]
\end{equation} 
be a ``triangular'' system of polynomials $F_i$ which do not depend on the first 
$i-1$ variables and 
with a term of the form $ g_iX_{i}X_{i+1}^{s_{i,i+1}} \ldots
X_m^{s_{i,m}}$ such that $g_{i}\in \Z\setminus \{0\}$, $\deg_{X_{i}}F_{i}=1$ and
$\deg_{X_{j}}F_{i}=s_{i,j}$, $j=i+1,\ldots, m$. 


Using the same idea as in~\cite{OstShp1}, similarly to the bounds
of \S\ref{sec:polyn-syst-height}, 
one can show that for the systems~\eqref{eq:Polysyst} 
the degree and the height of the  $k$th
iterate grow  polynomially and thus obtain stronger versions of our 
main results in \S\ref{sec:Period}, \S\ref{sec:OrbVar GenAvoid} and \S\ref{sec:OrbVar UML}.


Indeed, an inductive argument shows that for any  integer $k\ge 1$,
the polynomials $F_i^{(k)}$, $i =1,\ldots, m$, defined
by~\eqref{eq:RatIter}, are of degree and height at most
$$
d_{i,k} = O_{d,m}(k^{m-i}) \mand h_{i,k} = O_{d,h,m}(k^{m-i+2}). 
$$
In turn, one obtains a version of Theorem~\ref{thm:Cycles-Poly}  with an
  integer $\fA_k \ge 1$ satisfying 
$$
\log \fA_k \ll_{d,h,m} k^{m(3m-1)}
$$
and 
such that, if $p$ is a prime number not dividing $\fA_k$, 
then the reduction of $\bfF$ modulo $p$ has at most $O_{d,h,m}(k^{m(m-1)/2})$
periodic points of order~$k$.  Similarly, for systems the form~\eqref{eq:Polysyst}
a version of Theorem~\ref{thm:OrbIntVar} holds with 
$$
N \ge {c(\bfrho)}{\varepsilon^{-(m-1)s-2}}
\mand 
\log \fB \le c(\bfrho)    \varepsilon^{-m(3m-1)},
$$
while a version of Theorem~\ref{thm:OrbIntVar Alt} holds with
$$
\log \fB   \le c(\bfrho)  \varepsilon^{-(m-1)s(L+1) + m + L +1}
$$
and the same value of $N$. 

The polynomial systems of the form~\eqref{eq:Polysyst} have been
generalised in various directions, including their rational function
analogues \cite{GomOstShp, HassPropp}. It is expected that similar
improvements hold for all these systems as well.

\end{document}